\newtheorem{thm}{Theorem}[section]
\newtheorem{lem}[thm]{Lemma}
\newtheorem{prop}[thm]{Proposition}
\newtheorem{cor}[thm]{Corollary}
\newtheorem{conj}[thm]{Conjecture}
\theoremstyle{definition}
\newtheorem{defi}[thm]{Definition}
\theoremstyle{remark}
\newtheorem{rem}[thm]{Remark}
\newcommand{\de}{\, \mathrm{d}}
\newcommand{\del}{\partial}
\newcommand{\Vol}{\operatorname{Vol}}
\newcommand{\Trig}{\operatorname{Trig}}
\newcommand{\Hyp}{\operatorname{Hyp}}
\newcommand{\N}{\mathbb{N}}
\newcommand{\Z}{\mathbb{Z}}
\newcommand{\R}{\mathbb{R}}
\renewcommand{\S}{\mathbb S}
\newcommand{\CT}{\mathcal{T}}
\newcommand {\bx}{\mathbf x}
\newcommand {\bk}{\mathbf k}
\newcommand {\bm}{\mathbf m}
\newcommand {\by}{\mathbf y}
\newcommand {\bn}{\mathbf n}
\newcommand {\balpha}{\boldsymbol \alpha}
\newcommand {\bbeta}{\boldsymbol \beta}
\newcommand {\bgamma}{\boldsymbol \gamma}
\newcommand {\bxi}{\boldsymbol \xi}
\newcommand {\btheta}{\boldsymbol \theta}
\newcommand {\krn}{\nobreak\hspace{.16667em plus .08333em}}
\newcommand{\set}[1]{\left\{ #1 \right\}}
\newcommand{\bo}\boldsymbol{}
\newcommand{\bigo}[1]{O\left( #1 \right)}
\newcommand{\smallo}[1]{o\left( #1 \right)}
\DeclareMathOperator{\arctanh}{arctanh}
\DeclareMathOperator{\arccot}{arccot}
\renewcommand{\hat}{\widehat}
\renewcommand{\tilde}{\widetilde}
\renewcommand{\phi}{\varphi}
\numberwithin{equation}{subsection}
\begin{document}

\title{The Steklov spectrum 
  of cuboids}
\author[A. Girouard]{Alexandre Girouard}
\address{D\'epartement de math\'ematiques et de statistique, Pavillon Alexeandre-Vachon, Universit\'e Laval, Qu\'ebec, QC, G1V 0A6, Canada}
\email{alexandre.girouard@mat.ulaval.ca}

\author[J. Lagac\'e]{Jean Lagac\'e}
\address{D\'epartement de math\'ematiques et de statistique, Universit\'e de Montr\'eal, CP 6128 succ Centre-Ville, Montr\'eal, QC H3C 3J7, Canada}
\email{lagacej@dms.umontreal.ca}

\author[I. Polterovich]{Iosif Polterovich}
\address{D\'epartement de math\'ematiques et de statistique, Universit\'e de Montr\'eal, CP 6128 succ Centre-Ville, Montr\'eal, QC H3C 3J7, Canada}
\email{iossif@dms.umontreal.ca}

\author[A. Savo]{Alessandro Savo}
\address{Dipartimento SBAI, Sezione di Matematica
Sapienza Universit\`a di Roma, Via Antonio Scarpa 16
00161 Roma, Italy}
\email{alessandro.savo@sbai.uniroma1.it}  

\keywords{Steklov problem, cuboids, spectral asymptotics, lattice counting.}

\date{}

\begin{abstract}
The paper is concerned with the Steklov eigenvalue problem on cuboids
of arbitrary dimension. We prove a two-term asymptotic formula for the
counting function of Steklov eigenvalues on cuboids in dimension
$d\ge~3$.  Apart from the standard Weyl term, we calculate explicitly
the second term in the asymptotics, capturing the contribution of the
$(d-2)$--dimensional facets of a cuboid. Our approach is  based on
lattice counting techniques. While this strategy is similar to the one
used for the Dirichlet Laplacian, the Steklov case carries additional
complications.  In particular, it is not clear how to establish
directly the completeness of the system of Steklov eigenfunctions
admitting separation of variables. We prove this result  using a
family of auxiliary Robin boundary value problems.  Moreover,  the
correspondence between the Steklov eigenvalues and lattice points is
not exact, and hence more delicate analysis is required to obtain
spectral asymptotics.  Some other related results are presented, such
as an isoperimetric inequality for the first Steklov eigenvalue, a concentration property of high frequency Steklov
eigenfunctions and applications to spectral determination of cuboids.
 \end{abstract}
\maketitle

\section{Introduction and main results}
\subsection{Asymptotics of the Steklov spectrum}
The Steklov eigenvalues of a boun\-ded Euclidean domain
$\Omega\subset\R^d$ are the real numbers $\sigma\in\R$ for which there
exists a nonzero harmonic function $u:\Omega\rightarrow\R$ such that
$\del_n u = \sigma u$ on the boundary $\del\Omega$. Here $\partial_n$
denotes the outward normal derivative, which exists
almost everywhere provided the boundary $\partial\Omega$ is
Lipschitz. Under this assumption, it is known that for $d\geq 2$ the Steklov spectrum is
discrete (see \cite{Agranovich})  and is given by the increasing sequence of  eigenvalues
$0 = \sigma_0 < \sigma_1 \le \sigma_2 \le \dotso \nearrow \infty,$
where each eigenvalue is repeated according to its multiplicity.
The counting function $N:\R\to\N$ is then defined by
$N(\sigma):=\#\{j\in\N\,:\,\sigma_j<\sigma\}.$
For domains with smooth boundary, one can show using pseudodifferential
techniques
that the counting function  satisfies Weyl's law
\begin{gather}\label{asymptotics:smooth}
  N(\sigma)= \frac{\omega_{d-1}}{(2\pi)^{d-1}}\Vol_{d-1}(\del \Omega)
  \sigma^{d-1}+O(\sigma^{d-2})\quad\mbox{ as }\sigma\nearrow+\infty,
\end{gather}
where $\omega_{d-1}$ is the measure of the unit ball
$B_1(0)\subset\R^{d-1}.$  The remainder estimate in \eqref{asymptotics:smooth} is sharp and attained  on a round ball.
Moreover, a two-term asymptotic formula for the counting function holds under a non-periodicity condition of the geodesic flow on 
$\partial \Omega$ (see \cite[formula (5.1.8)]{PS}).

Understanding  precise  asymptotics for Steklov eigenvalues on domains
with singularities, such as corners and edges, is significantly more
challenging, since   pseudodifferential techniques do not work in this
case (see \cite[Section 3]{girpoltJST} for a discussion).
Using variational  methods,  one can prove a one-term Weyl asymptotic formula  that holds for any piecewise $C^1$ Euclidean domain (see \cite{Agranovich}):
\begin{gather}
\label{C1Weyl}  
N(\sigma)= \frac{\omega_{d-1}}{(2\pi)^{d-1}}\Vol_{d-1}(\del \Omega)
  \sigma^{d-1}+o(\sigma^{d-1})\quad\mbox{ as }\sigma\nearrow+\infty.
\end{gather}
However, in order to get sharper asymptotics,  one needs to understand the contribution of  singularities to the counting function. In two dimensions, some results in this direction have been recently obtained in \cite{LPPS}. 
In the present paper we aim to explore the most basic  higher-dimensional example: the Euclidean cuboids.

\subsection{Main result}
Given $d\in\N$, the \emph{cuboid}\footnote{Cuboids are also often referred to as \emph{boxes}, \emph{$d$-orthotopes} or
\emph{hyperrectangles}. The term ``cuboids'' appears to be more common in recent literature on spectral geometry (see \cite{GL,vBG}).}
with parameters
$a_1,\dotsc,a_d>0$ is defined as a product of the intervals
$$\Omega=(-a_1,a_1)\times (-a_2,a_2)\times\dotso\times(-a_d,a_d)\subset\R^d.$$ If $ a_1=a_2=\dots=a_d$ we say that $\Omega$  is  a \emph{cube}.
The main result of this paper is the following theorem.
\begin{thm}\label{thm:main}
  Let $\Omega\subset\R^d$ be the cuboid with parameters
  $a_1,\dotsc,a_d>0$.   For $d\geq 3$, the counting function of Steklov eigenvalues satisfies a two-term asymptotic formula as $\sigma \to \infty$:
  \begin{equation}
\label{counting:higher}
 N(\sigma) = C_1 \Vol_{d-1}(\del \Omega) \sigma^{d-1} + C_2 \Vol_{d-2}
 (\del^2\Omega) \sigma^{d-2}  + \bigo{\sigma^{\eta}},
\end{equation}
where $\del^2\Omega$ denotes the union of all the $(d-2)$-dimensional facets of $\Omega$.
Here  $\eta=2/3$ for $d=3$ and $\eta=d-2-\frac{1}{d-1}$ for $d\geq 4$. The
constants $C_1$ and $C_2$ are given by
 \begin{align*}
  C_1 = \frac{\omega_{d-1}}{(2\pi)^{d-1}}
  \end{align*}
  and
  \begin{align*}
  C_2 = \frac{2^{\frac{d-2}2} \omega_{d-2}}{(2\pi)^{d-2}} -  \frac{2G_{d-1,1}}{\pi^{d-1}} -  \frac{\omega_{d-2}}{2(2\pi)^{d-2}}
 \end{align*}

where
\begin{equation*}
 G_{d-1,1} = \underbrace{\int_0^{\pi/2} \dotso \int_0^{\pi/2}}_{d-2}
 \arccot\left(\prod_{j=1}^{d-2} \csc \theta_j \right)
 \prod_{k=1}^{d-2} \sin^k(\theta_k) \de \theta_1 \ldots \de \theta_{d-2}.
\end{equation*}
For $d=2$, the counting function  admits a one-term asymptotics
  \begin{equation*}
    N(\sigma) = \pi^{-1}\Vol_{1}(\del \Omega) \sigma + \bigo 1.
  \end{equation*}
\end{thm}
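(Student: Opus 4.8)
The plan is to reduce the Steklov eigenvalue problem on a cuboid to a lattice-point counting problem, in the spirit of the classical analysis of the Dirichlet Laplacian on a box, but dealing with two genuinely new difficulties: first, establishing that the separated-variables eigenfunctions exhaust the spectrum, and second, the fact that separated Steklov eigenvalues are not exactly indexed by lattice points. I would begin by looking for Steklov eigenfunctions of the form $u(x)=\prod_j f_j(x_j)$ where each $f_j$ is either a hyperbolic function $\cosh(t x_j)$ or $\sinh(t x_j)$, or a trigonometric function, with the harmonicity condition $\Delta u=0$ forcing the separation constants to satisfy $\sum_j \mu_j^2 = 0$ with signs determined by the hyperbolic/trigonometric choice in each variable. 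Plugging into the boundary condition $\partial_n u=\sigma u$ on each pair of faces $x_j=\pm a_j$ yields, for each coordinate that is ``hyperbolic,'' a transcendental equation of the type $t\tanh(t a_j)=\sigma$ or $t\coth(t a_j)=\sigma$, and for the trigonometric coordinates a quantization $\mu_j a_j \in \frac{\pi}{2}\N$. The key structural observation is that for large $\sigma$ the solutions $t$ of $t\tanh(ta_j)=\sigma$ behave like $\sigma + O(e^{-2a_j\sigma})$, i.e. they are exponentially close to $\sigma$; this is what makes the correspondence with lattice points approximate rather than exact.

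Next I would address completeness. The natural worry is that the separated eigenfunctions might miss part of the spectrum. The approach indicated in the abstract is to introduce, for each parameter value, a family of auxiliary Robin (or Steklov-type) boundary value problems on the cuboid which \emph{do} separate cleanly, whose eigenfunctions one can count exactly, and which sandwich or limit onto the Steklov problem. Concretely, one fixes the ``Robin parameter'' and observes that the Robin eigenvalue problem $-\Delta u = \lambda u$, $\partial_n u = \beta u$ (or a variant) on the cuboid admits a complete orthogonal basis of product eigenfunctions by the standard Hilbert-space argument (the one-dimensional Robin problems on each interval are self-adjoint with complete eigenbases, and tensor products of complete systems are complete). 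By choosing the family of Robin problems appropriately and taking a limit, or by a continuity/deformation argument in the parameter, one transfers completeness to the Steklov problem: every Steklov eigenfunction must lie in the closure of the span of the separated ones. I expect this to require some care about which function spaces one works in (the Steklov eigenfunctions live in a trace space on $\partial\Omega$, whereas Robin eigenfunctions are naturally in $L^2(\Omega)$), and this is one of the two places where the Steklov case is strictly harder than the Dirichlet case.

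With completeness in hand, the counting function $N(\sigma)$ becomes a sum, over all subsets $S\subset\{1,\dots,d\}$ designating which coordinates are trigonometric, of the number of admissible tuples: $|S|\ge 1$ lattice-type constraints $\mu_j a_j\in\frac{\pi}{2}\mathbb N$ together with the harmonicity constraint and the transcendental equations in the remaining coordinates. The leading contribution comes from the configuration with exactly one hyperbolic coordinate (which supplies the ``$\sigma$'' scale) and $d-1$ trigonometric ones, producing a lattice-point count in a $(d-1)$-dimensional region scaling like $\sigma^{d-1}$ — this yields $C_1\Vol_{d-1}(\partial\Omega)\sigma^{d-1}$. The second term has three sources, matching the three pieces of $C_2$: (i) the boundary/edge correction in the $(d-1)$-dimensional lattice count (a Gauss-circle-type boundary term, contributing the $\omega_{d-2}/(2(2\pi)^{d-2})$ piece and the $2^{(d-2)/2}\omega_{d-2}/(2\pi)^{d-2}$ piece), (ii) configurations with two hyperbolic coordinates and $d-2$ trigonometric ones, whose count involves the spherical integral $G_{d-1,1}$ — here $\arccot(\prod\csc\theta_j)$ encodes exactly the solid angle cut out by the two transcendental equations $t_i\tanh(t_i a_i)=\sigma$ in the asymptotic regime — and (iii) the error incurred by replacing the true transcendental solution $t=\sigma+O(e^{-2a\sigma})$ by $\sigma$ in all of the above counts; because this discrepancy is exponentially small it is absorbed into the remainder and does not affect $C_2$, though one must check that the \emph{indexing} shift (solutions of $t\tanh=\sigma$ versus $t\coth=\sigma$ interlace and differ by $O(1)$ in count on each line) is tracked correctly.

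The main obstacle, and the bulk of the work, is the lattice-point estimate with the sharp remainder $O(\sigma^\eta)$, $\eta=2/3$ for $d=3$ and $\eta=d-2-\frac1{d-1}$ for $d\ge4$. This is where I would invoke standard results on the number of lattice points in dilated convex bodies: the $\sigma^{2/3}$ in dimension three is the classical bound for the error in counting lattice points inside a dilated planar convex domain with smooth, positively curved boundary (van der Corput), while the exponent $d-2-\frac1{d-1}$ in higher dimensions is the known bound for lattice points in dilated $(d-1)$-dimensional convex bodies. The delicate point is that the relevant region is not a fixed convex body dilated by $\sigma$: its boundary is itself $\sigma$-dependent through the transcendental equations, being a perturbation (by exponentially small, hence negligible, amounts) of a fixed smooth convex hypersurface — so I would need a version of the lattice-counting estimate that is stable under such perturbations, and I would need to verify the requisite curvature/non-degeneracy of the limiting hypersurface. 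I would then assemble the contributions of all subsets $S$, check that only $|S|\in\{d-1\}$ contributes to the leading term and $|S|\in\{d-1,d-2\}$ (plus the boundary correction) to the second term, and confirm that everything else — including the $d=2$ case, where the one-dimensional count is exact up to $O(1)$ — falls into the stated error. For $d=2$ the statement is essentially immediate: there is one hyperbolic and one trigonometric coordinate, the trigonometric quantization gives an arithmetic progression, the transcendental equation in the other variable has exactly one solution per value, and counting gives $\pi^{-1}\Vol_1(\partial\Omega)\sigma+O(1)$ directly.
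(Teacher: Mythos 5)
Your overall strategy (separation of variables with completeness via an auxiliary Robin problem, reduction to lattice counting, summing over the number of trigonometric versus hyperbolic coordinates, leading term from one hyperbolic coordinate, van der Corput/Hlawka-type remainder estimates) matches the paper's skeleton. But there is a genuine conceptual error that would produce the wrong value of $C_2$, and it is precisely the error that the paper explicitly identifies as fatal in the earlier attempt of Pinasco--Rossi.

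You assert that ``for the trigonometric coordinates [one has] a quantization $\mu_j a_j\in\frac{\pi}{2}\N$.'' This is false. The one-dimensional boundary conditions for a trigonometric factor are $\sigma=\alpha\cot(\alpha a)$ or $\sigma=-\alpha\tan(\alpha a)$, which are just as transcendental as the hyperbolic ones. Each solution $\alpha$ lies in an interval of length $\pi/(2a)$, offset from a lattice point by an $\arccot$-term of size $O(1)$ that depends nontrivially on the ratios $\alpha_j/\alpha_i$. Consequently, the counting region $E_\sigma$ is a perturbation of a fixed ellipsoid of order $\sigma^{-1}$, \emph{not} exponentially small: the volume of $E_\sigma$ has a nonvanishing $\sigma^{-1}$ correction (Lemma~\ref{lem:volumeesigma}), and it is exactly this correction that produces the $-2G_{d-1,1}/\pi^{d-1}$ piece of $C_2$. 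By attributing the transcendental behavior to the hyperbolic variables only and claiming the perturbation of the counting region is exponentially small and ``does not affect $C_2$,'' your argument would miss this term entirely.

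Related misattributions follow from this: you write that $G_{d-1,1}$ ``encodes exactly the solid angle cut out by the two transcendental equations $t_i\tanh(t_ia_i)=\sigma$,'' i.e.\ the $p=d-2$ configuration. In the paper $G_{d-1,1}$ arises in the $p=d-1$ (one hyperbolic coordinate) count, from the $\arccot$ offset of the trigonometric solutions, and it enters through the $\sigma^{-1}$ correction to $\Vol(E_\sigma)$. The third piece $2^{(d-2)/2}\omega_{d-2}/(2\pi)^{d-2}$ of $C_2$ is the \emph{leading} Weyl term of $N_{d-2}$ (two hyperbolic coordinates), not a Gauss-circle boundary correction of the $(d-1)$-dimensional count; the only boundary-overcounting term is $-\omega_{d-2}/(2(2\pi)^{d-2})$ (Lemma~\ref{lem:overcounted}). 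Finally, the hyperbolic factors ($\tanh$ vs.\ $\coth$) do \emph{not} interlace with $O(1)$ index shifts --- they produce eigenvalues within $O(|\bn|^{-\infty})$ of each other, hence the clusters of size $2^q$ in Theorem~\ref{thm:eigenlattice}. The $O(1)$ offsets come from the $\sin$/$\cos$ choice, placing $\balpha$ in different boxes $I_{2\bn+\bm}$. Without tracking the trigonometric transcendental offset as an $O(1)$ (not exponentially small) perturbation of the lattice-counting region, the derivation of $C_2$ breaks down.
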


\begin{rem}
 \label{weylconst}
It can be shown that $C_2>0$ for all $d \ge 3$, see  Appendix~\ref{appendix:positive}. 
  The constants $G_{d,1}$ are special cases of constants $G_{p,q}$
  which will be introduced in Section \ref{section:asymptotics}.  
The constants $G_{2,1}$ and $G_{3,1}$ can be computed explicitly as
\begin{equation*}
 \begin{aligned}
  G_{2,1} &= \frac{1}{2}\left(-1 +\sqrt 2\right) \pi\\
  G_{3,1} &= \frac{1}{8}\left(- 2 + \pi\right) \pi.
 \end{aligned}
\end{equation*}
\end{rem}
\begin{rem}
  For $d=2$, the above asymptotics also follows from \cite[Corollary 1.6.1]{LPPS}.
\end{rem}

\begin{rem}
    For $d=3$, Theorem \ref{thm:main} predicts that
    $$\mathcal{R}(\sigma)=\frac{N(\sigma) - C_1 \Vol_{2}(\del \Omega) \sigma^{2} - C_2
      \Vol_{1}(\del^2\Omega) \sigma}{\sigma^{2/3}}$$
    is
    a bounded function of $\sigma$.
    In order to validate the expression for the constant $C_2$ obtained in Theorem \ref{thm:main}, we have checked numerically that this claim holds, using the approximate eigenvalues introduced in Section \ref{section:asymptotics} on a cube with side lengths $2$. Figure~\ref{fig:numerics} shows that $|\mathcal{R}(\sigma)|\le 3$ for $\sigma < 750$  which corresponds to approximately  a million eigenvalues. 
    \begin{figure}
      \centering
      \includegraphics[width=10cm]{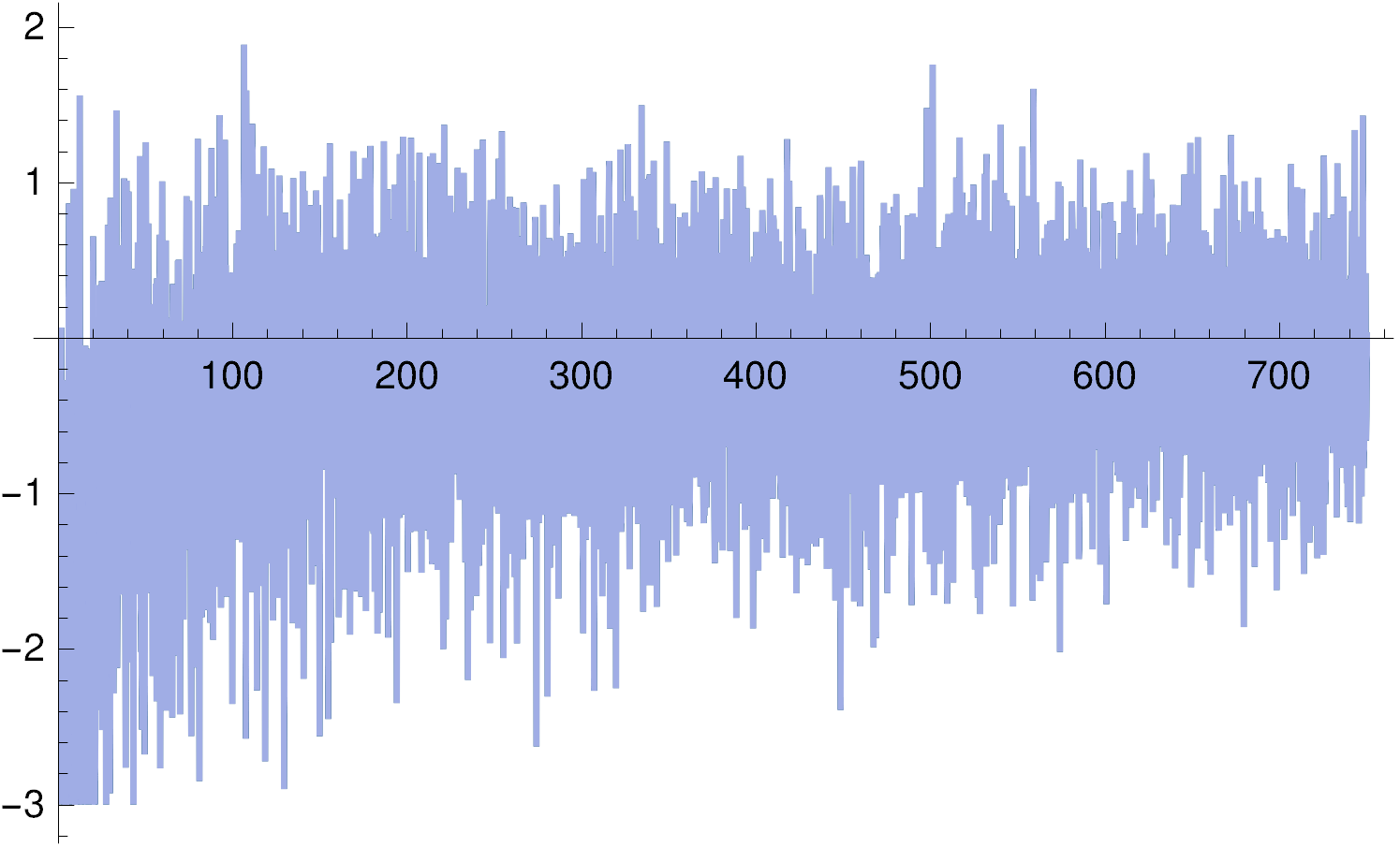}
      \caption{$\frac{N(\sigma) - C_1 \Vol_{2}(\del \Omega) \sigma^{2} - C_2
      \Vol_{1}(\del^2\Omega) \sigma}{\sigma^{2/3}}$ for $\sigma < 750$.}
      \label{fig:numerics}
    \end{figure}
\end{rem}

\subsection{Outline of the proof}
The proof of Theorem~\ref{thm:main} is given in  Section
\ref{section:asymptotics}. The outline of the argument is as follows.
First, we show that the Steklov eigenvalue problem on a cuboid admits
separation of variables, see  Lemma~\ref{Lemma:separated} below.
Separation of variables yields eigenfunctions that are produts of
trigonometric, hyperbolic and possibly linear factors. One can check
that the number of eigenvalues corresponding to eigenfunctions
containing linear terms is at most finite, see Theorem
\ref{thm:eigenlattice}. The same theorem also shows that the
eigenvalue counting problem can be reduced to a family of  approximate
lattice counting problems. More specifically, given  $1\le p\le d$, we
consider the counting function $N_p$ of eigenvalues corresponding to
eigenfunctions with exactly $p$ trigonometric factors. It turns out
that for each $p>1$, the counting function $N_p$ satisfies a  two-term
asymptotic formula, see Proposition~\ref{Npsigma}. The functions $N_p$
for  $p=d-1$ and $p=d-2$ are the dominant ones. In particular,
the main term in \eqref{counting:higher} corresponds to the main term in
the asymptotics for $N_{d-1}$. The second term in
\eqref{counting:higher} is obtained as as a sum of the main  term in
the asymptotics of $N_{d-2}$ and the second term in $N_{d-1}$. The
latter also splits into two parts: one is the standard contribution of
overcounted lattice points (see Lemma~\ref{lem:overcounted}), and the
other has to do with the geometry of the domain $E_\sigma$
defined by \eqref{eq:Esigma} arising in the lattice counting problem.
While this domain $E_\sigma$ converges to a ball as $\sigma \to
\infty$, the approximation produces an error that contributes to the
second term of \eqref{counting:higher}. This explains why the
coefficient $C_2$ is represented by a sum of three constants. Note
that while two of these 
constants are negative, the coefficient $C_2$ is always positive, see
Appendix~\ref{appendix:positive}. 

\subsection{Discussion}
The second term in Weyl asymptotics \eqref{counting:higher} for cuboids  could be compared with the corresponding term in the asymptotic expression \cite[formula (5.1.8)]{PS} mentioned earlier, which holds on  smooth manifolds with boundary,  satisfying a non-periodicity condition. Recall that in the smooth case, the second term is proportional to the integral of the mean curvature of the boundary. A similar interpretation could be given to the second term in \eqref{counting:higher}, if an analogue of the mean curvature for cuboids is   thought of as  a $\delta$-function supported on the union of the $(d-2)$-dimensional facets.

It would be very interesting to establish an analogue of Theorem \ref{thm:main} for arbitrary Euclidean polyhedra and, more generally, for Riemannian manifolds with edges, satisfying certain non-periodicity  assumptions. While the present paper was in the final stages of preparation, V. Ivrii \cite{Iv} informed us on his work in progress in this direction.   
We believe that a two-term Weyl asymptotic formula \eqref{counting:higher} holds for any polyhedron in dimension $d\ge 3$, with the coefficients $C_1$ and $C_2$ depending on the dimension and the angles between the $(d-1)$-dimensional facets of a polyhedron. 

Another promising direction of further research in the subject is to explore the asymptotic expansion for the Steklov heat trace on Euclidean polyhedra,  as well as on arbitrary Riemannian manifolds with edges. In particular, one could  ask whether the Steklov spectral asymptotics contains  information on the lower-dimensional facets of  polyhedra. While the Weyl asymptotics does not appear to be accurate enough for that purpose, the Steklov heat trace asymptotics  is likely to  give  a positive  answer to this question.  We intend to explore it elsewhere.
 
\begin{rem}
The existence of a  two-term asymptotic formula for the counting function of Steklov eigenvalues on a cube was claimed  earlier in \cite{pinascorossi}. However, the proof of this claim  contained a miscalculation invalidating the argument. Indeed, in the beginning of \cite[Section 3]{pinascorossi}, the authors write down the boundary condition at $x_i=0$ in  case $\beta_i<0$ and get $c_1\sqrt{|\beta_i|}=\lambda c_2$, while it should be $-c_1\sqrt{|\beta_i|}=\lambda c_2$, since the normal derivative at $x_i=0$ is $-\partial_i$. Due to this missing minus sign, the authors obtain the equation $\sin(\sqrt{\beta_i})=0$ leading to an exact correspondence between Steklov eigenvalues and lattice points. However,  in reality this correspondence is only approximate (see subsection \ref{reduc}), and therefore counting eigenvalues is a significantly more difficult task. Note also that the completeness of eigenfunctions admitting separation of variables was not justified  in~\cite{pinascorossi}.
\end{rem}

\subsection{An isoperimetric  inequality for the first Steklov eigenvalue} 
Given a cuboid $\Omega\subset\R^d$ with parameters $a_1,\dotsc,a_d>0$,
let $\Omega^\star$ and $\Omega^\sharp$ be the cubes such that
$$\Vol_{d-1}\partial\Omega^\star=\Vol_{d-1}\partial\Omega
\qquad\mbox{ and }\qquad\Vol_{d}\Omega^\sharp=\Vol_{d}\Omega.
$$
\begin{thm}\label{thm:isoperimetric}
  For any cuboid $\Omega$,
  \begin{itemize}
  \item $\sigma_1(\Omega^\star)\geq\sigma_1(\Omega)$, with equality if and
  only if $\Omega^\star=\Omega$;
  \item $\sigma_1(\Omega^\sharp)\geq\sigma_1(\Omega)$, with equality if and
  only if $\Omega^\sharp=\Omega$.
  \end{itemize}
\end{thm}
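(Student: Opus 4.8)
The plan is to combine an explicit description of $\sigma_1$ coming from separation of variables with a symmetrisation argument in the edge parameters $a_1,\dots,a_d$.

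\emph{Step 1: an explicit formula for $\sigma_1(\Omega)$.} By Lemma~\ref{Lemma:separated} each Steklov eigenfunction on a cuboid is a product $\prod_i f_i(x_i)$ of trigonometric, hyperbolic and linear factors, and by Theorem~\ref{thm:eigenlattice} only finitely many eigenvalues come from eigenfunctions containing a linear factor; hence $\sigma_1$ is realised by a separated eigenfunction with only trigonometric and hyperbolic factors. Imposing the Steklov condition on each pair of opposite faces, a factor $\cosh(\beta_i x_i)$ contributes the boundary ratio $\beta_i\tanh(\beta_i a_i)$, a factor $\cos(\alpha_i x_i)$ contributes $-\alpha_i\tan(\alpha_i a_i)$ and a factor $\sin(\alpha_i x_i)$ contributes $\alpha_i\cot(\alpha_i a_i)$; all these ratios must equal the common eigenvalue $\sigma$, and harmonicity forces $\sum_{\mathrm{hyp}}\beta_i^2=\sum_{\mathrm{trig}}\alpha_i^2$, so at least one hyperbolic and at least one trigonometric factor occur. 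A short case analysis of the monotonicity of $t\mapsto t\tanh t$, $t\mapsto t\cot t$ and $t\mapsto-t\tan t$ shows that $\sigma$ is made smallest by using exactly one trigonometric factor, of type $\sin$ and placed in a direction of largest $a_i$. Relabelling so that $a_d=\max_i a_i$, this yields that $\sigma_1(\Omega)$ is the unique $\sigma\in(0,1/a_d)$ solving $\sum_{i=1}^{d-1}\beta_i(\sigma)^2=\alpha(\sigma)^2$, where $\beta_i(\sigma)>0$ is defined by $\beta_i\tanh(\beta_i a_i)=\sigma$ and $\alpha(\sigma)\in(0,\pi/(2a_d))$ by $\alpha\cot(\alpha a_d)=\sigma$, with eigenfunction $\sin(\alpha x_d)\prod_{i<d}\cosh(\beta_i x_i)$. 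Existence and uniqueness of the root are clear because $\sigma\mapsto\beta_i(\sigma)^2$ is increasing and $\sigma\mapsto\alpha(\sigma)^2$ is decreasing, with the obvious boundary values; one also records that $a\mapsto\beta(\sigma,a)^2$ and $a\mapsto\alpha(\sigma,a)^2$ are strictly decreasing.

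\emph{Step 2: reduction to a symmetric extremal problem.} Steklov eigenvalues scale like the inverse length, so $\sigma_1$ is a symmetric function of $(a_1,\dots,a_d)$ that is homogeneous of degree $-1$; hence the two assertions are equivalent to the statement that $\sigma_1$ attains its maximum at the cube along the scale-normalised surface $\{\sum_i\prod_{j\ne i}a_j=1\}$, respectively $\{\prod_i a_i=1\}$. It suffices to treat the first ($\Omega^\star$) surface: if $\Omega$ is not a cube then by the elementary (AM--GM) isoperimetric inequality for cuboids $\Vol_d(\Omega^\star)>\Vol_d(\Omega)=\Vol_d(\Omega^\sharp)$, so $\Omega^\star$ is a cube strictly larger than $\Omega^\sharp$, whence $\sigma_1(\Omega^\star)<\sigma_1(\Omega^\sharp)$; thus the inequality $\sigma_1(\Omega^\star)\ge\sigma_1(\Omega)$ with equality only at the cube already implies $\sigma_1(\Omega^\sharp)>\sigma_1(\Omega)$. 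Using Step~1 one checks that $\sigma_1\to0$ as a cuboid on $\{\sum_i\prod_{j\ne i}a_j=1\}$ degenerates (some ratio $a_i/a_j\to0$), so the supremum of $\sigma_1$ on this surface is attained at an interior point.

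\emph{Step 3: the maximiser is a cube.} Suppose a maximiser had $a_i\ne a_j$ for some $i\ne j$, and freeze the other $d-2$ parameters. Expanding the constraint in the two free variables gives $P\,a_ia_j+Q(a_i+a_j)=\mathrm{const}$ for suitable $P\ge0$, $Q>0$ (elementary symmetric functions of the frozen parameters), which is an arc in the open first quadrant with a unique ``diagonal'' endpoint $a_i=a_j$. The heart of the proof is the two-variable claim that $\sigma_1$ restricted to this arc is strictly maximal at the diagonal endpoint. To establish it, write $\Phi(\sigma;a_i,a_j):=\sum_k\beta_k(\sigma)^2-\alpha(\sigma)^2$, so that the equation of Step~1 reads $\Phi=0$; since $\partial_\sigma\Phi>0$, along the arc $\sgn\, d\sigma_1=-\sgn\big(\partial_{a_i}\Phi\,da_i+\partial_{a_j}\Phi\,da_j\big)$, where the signs of $\partial_{a_i}\Phi,\partial_{a_j}\Phi$ and of $da_i,da_j$ are explicit from Step~1 and the constraint. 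The remaining inequality reduces to elementary convexity/AM--GM estimates for the one-variable maps $a\mapsto\beta(\sigma,a)^2$ and $a\mapsto\alpha(\sigma,a)^2$; this sign-chase, together with the bookkeeping of which coordinate carries the $\sin$ factor when two parameters cross along the deformation, is the step I expect to be the main obstacle. Granting the two-variable claim, no maximiser can have two distinct parameters, so it is a cube; keeping track of strictness yields the stated equality cases, and the $\Omega^\sharp$ assertion then follows from Step~2.
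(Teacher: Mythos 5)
Your Step~1 reaches the same characterisation of the first eigenfunction as the paper's Proposition~\ref{prop:firsteigen}: one $\sin$ factor in the direction of the longest side, all other factors $\cosh$. (One small slip: ``only finitely many eigenvalues come from eigenfunctions with a linear factor, hence $\sigma_1$ is realised without one'' is a non sequitur --- $\sigma_1$ could a priori be among those finitely many. The paper rules out linear factors using Courant's theorem: a linear factor forces all other factors to be even, hence $\cos$ or $\cosh$; $\cos$ is excluded separately, and a linear factor together with only $\cosh$'s violates the harmonicity condition.) Your Step~2, deducing the $\Omega^\sharp$ statement from the $\Omega^\star$ statement via the isoperimetric comparison of cubes and the scaling $\sigma_1(t\Omega)=t^{-1}\sigma_1(\Omega)$, is valid and is a genuinely different organisation from the paper, which establishes both statements directly.

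The genuine gap is Step~3, and you say so yourself (``Granting the two-variable claim\dots''). The sign-chase you sketch does not close: if the two free parameters $a_i,a_j$ are both hyperbolic directions, then $\partial_{a_i}\Phi$ and $\partial_{a_j}\Phi$ are both negative while $da_i$ and $da_j$ have opposite signs along the constraint arc, so the sign of $\partial_{a_i}\Phi\,da_i+\partial_{a_j}\Phi\,da_j$ requires a quantitative comparison of the magnitudes of $\partial_a\bigl(\beta(\sigma,a)^2\bigr)$ at two different values of $a$, weighted by the Jacobian of the constraint --- this is not a mere sign chase but a concavity/monotonicity inequality you have not stated, let alone proved. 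Worse, if one of the free parameters is the $\sin$ direction, $\sigma_1$ switches branch as $a_i$ and $a_j$ cross, so along the deformation $\sigma_1$ is only piecewise smooth and the diagonal $a_i=a_j$ is a corner; arguing that this corner is the maximum needs a one-sided derivative analysis that is absent. In short, the two-variable claim is exactly the content of the theorem in miniature and has been deferred, not proved.

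The paper avoids all of this with a test-function argument: it takes the symmetrised first eigenfunction $u=\sum_{j=1}^d u_j$ on the unit cube $\Omega_0$ (each $u_j$ carrying the $\sin$ factor in direction $j$), verifies that its rescaling to $\Omega$ is orthogonal to constants on $\partial\Omega$, computes $R_\Omega[u]=R_{\Omega_0}[u]\cdot\prod_j a_j\big/\bigl(\tfrac1d\sum_j\prod_{i\ne j}a_i\bigr)$, and applies AM--GM directly under either normalisation. This gives both inequalities and the equality cases in a few lines, with no implicit differentiation and no case analysis on which direction carries the $\sin$ factor. If you want to rescue your explicit approach, the two-variable inequality you need is precisely what the paper's Rayleigh-quotient computation replaces.
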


The proof of the theorem is presented  in Section \ref{proof:isoperim}. In a way, it is not surprising that the cube, being the most symmetric of all cuboids, maximizes $\sigma_1$ under both  volume and surface  area restrictions.
Theorem \ref{thm:isoperimetric} could  be compared with the well-known Weinstock's inequality \cite{wein}
stating that the disk is a unique maximizer for
$\sigma_1$ among planar simply connected
domains with a given perimeter (see also a recent generalization of this result for convex domains in higher dimensions obtained in \cite{BFNT}), as well as with Brock's result
\cite{brock} which states that balls are unique maximizers among 
Euclidean domains $\Omega\subset\R^d$ with prescribed $d$--volume.

It follows from Theorem \ref{thm:isoperimetric} that any cube is spectrally determined among all cuboids.
\begin{cor}
  Let $\Omega\subset\R^d$ be a cuboid which is isospectral to
  the cube $\Omega_a\subset\R^m$ with side lengths $2a>0$. Then
  $d=m$ and $\Omega=\Omega_a$. 
\end{cor}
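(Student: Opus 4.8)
The plan is to combine the Weyl asymptotics of Theorem~\ref{thm:main} with the isoperimetric inequality of Theorem~\ref{thm:isoperimetric}. Assume $\Omega\subset\R^d$ is a cuboid whose Steklov spectrum coincides, with multiplicities, with that of the cube $\Omega_a\subset\R^m$, so that the two eigenvalue counting functions agree identically; denote them $N_\Omega$ and $N_{\Omega_a}$.

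First I would recover the dimension from the leading term. By Theorem~\ref{thm:main} (or already by the one-term Weyl law \eqref{C1Weyl}, valid for any piecewise $C^1$ domain), $N_\Omega(\sigma)$ grows like a constant times $\sigma^{d-1}$ and $N_{\Omega_a}(\sigma)$ like a constant times $\sigma^{m-1}$. Equality of the counting functions forces $d-1=m-1$, hence $d=m$. Comparing the leading coefficients and using that $C_1=\omega_{d-1}/(2\pi)^{d-1}$ depends only on $d$, we obtain $\Vol_{d-1}(\partial\Omega)=\Vol_{d-1}(\partial\Omega_a)$.

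Since a cube is determined by its side length and the side length is determined by the $(d-1)$-dimensional measure of its boundary, the cube $\Omega_a$ is exactly the cube $\Omega^\star$ attached to $\Omega$ in Theorem~\ref{thm:isoperimetric}, namely the unique cube with $\Vol_{d-1}\partial\Omega^\star=\Vol_{d-1}\partial\Omega$. Isospectrality also gives $\sigma_1(\Omega)=\sigma_1(\Omega_a)=\sigma_1(\Omega^\star)$. The first bullet of Theorem~\ref{thm:isoperimetric} asserts $\sigma_1(\Omega^\star)\geq\sigma_1(\Omega)$ with equality if and only if $\Omega^\star=\Omega$; since equality holds here, we conclude $\Omega=\Omega^\star=\Omega_a$, as claimed.

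The argument is short once the two main theorems are available; the only point needing a little care is extracting the dimension $d$ and the boundary measure $\Vol_{d-1}(\partial\Omega)$ from the spectrum. This is also why the surface-area normalization $\Omega^\star$ is the natural one to use here rather than the volume normalization $\Omega^\sharp$: the two-term Weyl formula \eqref{counting:higher} does not directly encode $\Vol_d(\Omega)$, so the second bullet of Theorem~\ref{thm:isoperimetric} cannot be invoked in the same way. (Should one establish that the $d$-volume of a cuboid is a Steklov spectral invariant—for instance through heat-trace asymptotics—the volume version would provide an alternative proof.)
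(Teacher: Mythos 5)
Your proof is correct and follows exactly the same route as the paper: extract $d=m$ and $\Vol_{d-1}(\partial\Omega)=\Vol_{d-1}(\partial\Omega_a)$ from the Weyl asymptotics of Theorem~\ref{thm:main}, then invoke the uniqueness of the maximizer in the surface-area part of Theorem~\ref{thm:isoperimetric} together with $\sigma_1(\Omega)=\sigma_1(\Omega_a)$. The extra remarks about why $\Omega^\star$ rather than $\Omega^\sharp$ is the right normalization are a sensible elaboration of what the paper leaves implicit.
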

\begin{proof}
  It follows from Theorem \ref{thm:main} that $d=m$ and
  $\Vol_{d-1}(\partial\Omega)=\Vol_{d-1}(\partial\Omega_a)$. Moreover,
  since $\sigma_1(\Omega)=\sigma_1(\Omega_a)$, the
  conclusion follows from the uniqueness of the maximizer in Theorem \ref{thm:isoperimetric}.
\end{proof}

Note that a similar corollary with an almost identical proof holds for planar simply-connected domains, among which the disk is spectrally determined, using the case of equality in Weinstock's theorem \cite{wein}.

Is still unknown whether there exist nonisometric Steklov isospectral Euclidean domains. Our results imply that if two rectangles are Steklov isospectral, they are isometric. 
\begin{cor}
\label{cor:rect}
 The Steklov spectrum of a rectangle uniquely determines its side lengths.
\end{cor}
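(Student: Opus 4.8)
The plan is to recover from the Steklov spectrum two numerical invariants of the rectangle which together pin down the unordered pair of side lengths. Write $\Omega=(-a_1,a_1)\times(-a_2,a_2)$ with $a_1\le a_2$, so that its side lengths are $2a_1$ and $2a_2$, equivalently the pair $\big(a_1+a_2,\ a_1\big)$. The $d=2$ case of Theorem~\ref{thm:main} shows that the perimeter $\Vol_1(\del\Omega)=4(a_1+a_2)$ is a spectral invariant, and $\sigma_1=\sigma_1(\Omega)$ trivially is one as well. A common dilation multiplies the perimeter by $\lambda$ and $\sigma_1$ by $\lambda^{-1}$, so after normalising $a_1+a_2=1$ it remains to prove that the function $\sigma_1(t):=\sigma_1\big((-t,t)\times(-(1-t),1-t)\big)$ is injective on $(0,1/2]$. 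This is exactly parallel to the proof of the previous corollary, with the strict monotonicity of $\sigma_1$ playing the role that uniqueness of the maximiser in Theorem~\ref{thm:isoperimetric} played there.

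First I would make $\sigma_1(t)$ explicit. By Lemma~\ref{Lemma:separated} together with Theorem~\ref{thm:eigenlattice}, for a non-square rectangle every Steklov eigenfunction is a product of a trigonometric and a hyperbolic factor; the eigenfunctions carrying a linear factor, which occur only in the case of a cube, are absent here and in any case are irrelevant for $\sigma_1$. Running through the parity classes one checks that for $t<1/2$ the bottom of the spectrum is always attained in the class of functions even in $x$ and odd in $y$, namely by $u=\cosh(\beta x)\sin(\beta y)$, for which
\[
\sigma_1(t)=\beta\tanh(\beta t),\qquad \tanh(\beta t)=\cot\!\big(\beta(1-t)\big),
\]
where $\beta=\beta(t)$ is the unique root in $\big(0,\pi/(2(1-t))\big)$; at $t=1/2$ this eigenvalue becomes double, consistently with Theorem~\ref{thm:isoperimetric}. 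Since $\sigma_1$ is a simple eigenvalue for $t<1/2$, standard perturbation theory shows that $t\mapsto(\beta(t),\sigma_1(t))$ is real-analytic on $(0,1/2)$.

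The crux is to prove that $\sigma_1(t)$ is strictly increasing on $(0,1/2)$, with limit $0$ as $t\to0^+$ and strict maximum at $t=1/2$. I would obtain this by differentiating the defining equation implicitly: solving for $\beta'(t)$ and substituting into $\sigma_1'(t)=\beta'(t)\tanh(\beta t)+\beta\big(\beta+t\beta'(t)\big)\cosh^{-2}(\beta t)$ reduces the claim to checking the sign of one explicit expression in the single variable $\tau=\beta t$, subject to the constraint imposed by the eigenvalue equation. A more geometric alternative is the Hadamard variation formula applied to the perimeter-preserving deformation $a_1\mapsto a_1+\eps$, $a_2\mapsto a_2-\eps$: on the flat sides the integrand equals $\pm|\del_\tau u_1|^2$, and since the boundary gradient of the first eigenfunction is concentrated on the pair of long sides $x=\pm a_1$, the net variation has the expected sign. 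Either way this is the step I expect to be the main obstacle — in the first approach because the sign of the transcendental derivative is not a formality, and in the second because the Hadamard formula must be justified at the corners of $\del\Omega$, where $u_1$ need not be $C^1$. Granting the monotonicity, $t$ is recovered from $\sigma_1$ together with the perimeter, so the pair $\{a_1,a_2\}$ — that is, the side lengths — is determined by the Steklov spectrum, as claimed.
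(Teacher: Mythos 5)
Your overall strategy matches the paper's: both recover the perimeter from the one-term Weyl law in $d=2$ and note that $\sigma_1$ is a spectral invariant, and both reduce the problem to showing that, with the perimeter fixed, $\sigma_1$ determines the side ratio uniquely. However, you leave the crucial step --- the injectivity (strict monotonicity) of $t\mapsto\sigma_1(t)$ --- as an open plan rather than a proof. You offer two possible routes (implicit differentiation of the defining transcendental equation, or a Hadamard variational argument) and explicitly flag each of them as problematic, the first because ``the sign of the transcendental derivative is not a formality'' and the second because of regularity at the corners. Since that monotonicity is exactly what the corollary hinges on, the proposal as written has a genuine gap.

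The paper sidesteps your difficulty by a cleaner parametrisation. Instead of treating $\sigma_1$ as an (implicitly defined) function of $t$ and differentiating it, it treats $\sigma_1$ and $L=a_1+a_2$ as given data, and rewrites the compatibility equation
\[
\sigma_1 = \alpha\cot(\alpha a_1) = \alpha\tanh(\alpha a_2)
\]
to express $a_2$ in two different ways as a function of the auxiliary variable $\alpha$ alone:
\[
a_2 = f(\alpha) = \frac{1}{\alpha}\arctanh\!\left(\frac{\sigma_1}{\alpha}\right),
\qquad
a_2 = g(\alpha) = L - \frac{1}{\alpha}\arccot\!\left(\frac{\sigma_1}{\alpha}\right).
\]
It then computes $f'$ and $g'$ explicitly and shows, by elementary estimates (no implicit differentiation, no $\beta'(t)$), that $f$ is strictly decreasing and $g$ is strictly increasing on $(\sigma_1,\infty)$. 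Hence the two curves meet at exactly one $\alpha_0$, which pins down $a_2$ and then $a_1=L-a_2$. This is the same uniqueness claim as your monotonicity of $\sigma_1(t)$, but cast in a form where the needed sign checks are routine; it is the piece you would still have to supply to complete your argument.
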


The proof of this corollary is  presented in 
  Section~\ref{section:spectraldetermination}. Let us conclude the introduction with the following conjecture:
\begin{conj}
Any two Steklov isospectral cuboids are isometric.
\end{conj}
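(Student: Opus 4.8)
The plan realizes the heat-trace program outlined in the Discussion. \emph{Step 1: the dimension.} If two cuboids are Steklov isospectral, their counting functions coincide, and the leading term of Theorem~\ref{thm:main} forces them to have the same dimension $d$, the same $\Vol_{d-1}(\del\Omega)$, and — from the second term — the same $\Vol_{d-2}(\del^2\Omega)$. For $d=2$ the claim is Corollary~\ref{cor:rect}, so from now on assume $d\ge 3$.

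\emph{Step 2: facet volumes from the heat trace.} The Steklov heat trace $Z_\Omega(t)=\sum_{j\ge 0}e^{-\sigma_j t}$ is a spectral invariant. Using separation of variables (Lemma~\ref{Lemma:separated}) and the description of the spectrum in Theorem~\ref{thm:eigenlattice}, one analyzes $Z_\Omega(t)$ as $t\to 0^+$ by summing the contributions of the families with $p$ trigonometric factors and controlling uniformly the transcendental corrections to the associated one–dimensional equations. Since all facets of a cuboid are flat, one expects
\[
  Z_\Omega(t)\ \sim\ \sum_{k=1}^{d}\gamma_{d,k}\,\Vol_{d-k}(\del^k\Omega)\,t^{-(d-k)},\qquad t\to 0^+,
\]
where $\del^k\Omega$ is the union of the $(d-k)$–dimensional facets of $\Omega$, the constants $\gamma_{d,k}$ depend only on $d$ and $k$ (with $\gamma_{d,1}$ and $\gamma_{d,2}$ given by $C_1$ and $C_2$ up to elementary factors), and the $k=d$ term is the dimension-dependent constant $\gamma_{d,d}\,2^d$. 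A direct count gives $\Vol_{d-k}(\del^k\Omega)=2^d\,e_{d-k}(a_1,\dots,a_d)$, where $e_m$ is the $m$-th elementary symmetric polynomial, so reading off the coefficients shows that two Steklov isospectral cuboids have the same $e_1(a),\dots,e_{d-1}(a)$. (For $d=3$ this is already contained in Theorem~\ref{thm:main}, which yields $e_1$ and $e_2$; for $d\ge 4$ it requires the heat trace, or equivalently a refinement of Theorem~\ref{thm:main} to an expansion with $o(\sigma)$ remainder.)

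\emph{Step 3: the remaining parameter.} Knowledge of $e_1(a),\dots,e_{d-1}(a)$ determines the monic polynomial $\prod_{i=1}^d(t-a_i)$ up to its constant term $(-1)^d e_d(a)=(-1)^d\,2^{-d}\Vol_d(\Omega)$, leaving a one–parameter ambiguity: the $d$–volume of $\Omega$ is invisible to the boundary heat trace. One further spectral invariant must close the gap, and the natural candidate is $\sigma_1(\Omega)$: one would prove that along the one–parameter family of cuboids with prescribed $e_1(a),\dots,e_{d-1}(a)$ the first Steklov eigenvalue is strictly monotone in $e_d(a)$, via a Hadamard-type domain-variation argument together with the explicit one–dimensional equations for the $\sigma_1$–eigenfunction; this would sharpen and extend Theorem~\ref{thm:isoperimetric}. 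An alternative is to recover $e_d(a)$ from a finer functional of the spectrum — for instance the zeta-regularized determinant of the Dirichlet-to-Neumann operator, or the locations of the finitely many exceptional eigenvalues whose separated eigenfunctions contain linear factors, which by the analysis behind Theorem~\ref{thm:eigenlattice} are tied to the quantities $1/a_j$. Combining the three steps recovers the multiset $\{a_1,\dots,a_d\}$, hence the isometry type of $\Omega$.

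\emph{Main obstacle.} The hard part is Step 2: rigorously establishing the full small-$t$ asymptotic expansion of the Steklov heat trace on a domain with edges and higher–codimension facets. Pseudodifferential techniques fail at the singular set, so the expansion must be produced by hand from the separated eigenvalues, which demands uniform control of the transcendental corrections in Theorem~\ref{thm:eigenlattice} — this is the technical core of the heat-trace program. A secondary, milder difficulty is the strict monotonicity of $\sigma_1$ in Step 3, which is heuristically clear but requires a genuine variational argument for $d\ge 3$.
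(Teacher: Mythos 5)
What you have written is a research program, not a proof — and the statement you were asked about is in fact left as an open conjecture in the paper, which proves only the special cases you cite (the cube among cuboids, and rectangles via Corollary~\ref{cor:rect}); so there is no paper proof to match, and your proposal must stand on its own, which it does not yet do. The decisive gap is your Step~2: the full small-$t$ expansion of the Steklov heat trace on a cuboid, with coefficients proportional to the facet volumes $\Vol_{d-k}(\del^k\Omega)$ for \emph{all} $k$, is exactly the open problem the paper flags for future work, and it does not follow from Theorem~\ref{thm:main}. A two-term Weyl law only controls the spectrum to relative accuracy $\sigma^{-1}$ (with an unremovable $O(\sigma^{\eta})$ error), whereas extracting $e_3(a),\dots,e_{d-1}(a)$ from the heat trace requires an expansion to order $t^{-(d-3)}$ and beyond; the $\bigo{|\bn|^{-1}}$ corrections between the true eigenvalues and the approximate eigenvalues of Theorem~\ref{thm:eigenlattice}, and the geometry of the non-spherical sets $E_\sigma$, both feed into precisely those lower-order terms, so "reading off the coefficients" is not a formal consequence of anything established in the paper. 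Until that expansion is proved, isospectral cuboids are only known to share $d$, $\Vol_{d-1}(\del\Omega)$ and $\Vol_{d-2}(\del^2\Omega)$, i.e.\ $e_{d-1}(a)$ and $e_{d-2}(a)$, which for $d\ge 4$ is far from pinning down the side lengths.

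Step~3 has an independent gap even granting Step~2. Knowing $e_1,\dots,e_{d-1}$ leaves a one-parameter family, and your proposed way to kill the last parameter — strict monotonicity of $\sigma_1$ along that family — is asserted, not proved; Theorem~\ref{thm:isoperimetric} compares a cuboid only with the cube of equal volume or boundary area and gives no monotonicity along level sets of $e_1,\dots,e_{d-1}$, and a Hadamard variation here would have to be carried out with the explicit transcendental eigenvalue equations, which is genuinely delicate (note also that for $d=3$ your Steps 1--2 give only $e_1,e_2$, so even the lowest-dimensional case hinges on this unproved step). The fallback invariants you mention do not obviously help: the eigenvalues coming from separated eigenfunctions with linear factors are finitely many numbers of the form $a_j^{-1}$, but nothing in Theorem~\ref{thm:eigenlattice} lets you identify \emph{which} eigenvalues in the spectrum these are, and the zeta-determinant route presupposes the very heat-trace control that is missing. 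In short, the strategy is reasonable and consistent with the directions the authors themselves suggest, but both of its load-bearing steps are currently conjectural, so the conjecture remains open under your argument.
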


\subsection*{Plan of the paper}
In Section 2,  we explore the structure of Steklov eigenvalues and eigenfunctions on cuboids. In particular, in subsection \ref{sepvar} we describe separation of variables and prove that it yields a complete system of Steklov eigenfunctions.  In subsection \ref{classeig} a classification of eigenfunctions is presented based on the number of linear, trigonometric and hyperbolic terms, which is later used in subsection \ref{reduc} to reduce the problem of counting eigenvalues to counting approximate lattice points. Theorem
\ref{thm:main} is proved in Section \ref{section:asymptotics}. This is the most technicallly involved part of the paper, involving tools from analytic number theory and Fourier analysis. 
Other results of the paper are proved in Section \ref{other}. In particular, a somewhat surprising observation that Steklov eigenfunctions may concentrate on lower dimensional facets of cuboids is presented in subsection \ref{concent}. Subsections \ref{proof:isoperim} and \ref{section:spectraldetermination} provide the proofs of Theorem \ref{thm:isoperimetric} and 
Corollary \ref{cor:rect}. Appendix  \ref{appendix:arccot}  contains  the  proof  of an auxiliary Lemma \ref{lem:arccot} used in subsection \ref{many}.  In Appendix  \ref{appendix:positive}  we 
justify the positivity of the constant $C_2$ as stated  in Remark \ref{weylconst}.

\begin{rem}
  Right before submitting our paper on the archive, we learned of the
  preprint \cite{ArnoldTan} which discusses Steklov eigenvalues of
  rectangles and cuboids of dimension 3. Note that \cite[Conjecture
    3.1]{ArnoldTan} immediately follows from our Proposition~\ref{prop:firsteigen}.
\end{rem}

\subsection*{Acknowledgments}
We would like to thank Leonid Parnovski for useful discussions. In particular, they lead us to the results presented in subsection \ref{concent}. Part
of this work was accomplished in 2016 when A.G. visited the Institut de math\'ematiques
de Neuch\^atel, and  its hospitality is gratefully acknowledged.
Research of A.G. is supported by NSERC and FRQNT.
Research of J.L. is supported by the Alexander Graham Bell Canada Graduate Scholarship.
Research of I.P. is supported by NSERC, FRQNT and Canada Research Chairs Program.
Research of A.S. is supported by PRIN 2015.
\section{Eigenfunctions and separation of variables}\label{section:eigenfunctions}
\subsection{Separation of variables}
\label{sepvar}
The following lemma shows that  the method of separation of variables is
applicable to the computation of the Steklov spectrum of a product of
compact manifolds with boundary. In particular, we justify completeness of the system of 
Steklov eigenfunctions admitting separation of variables.
\begin{lem}\label{Lemma:separated}
  Let $M_1$ and $M_2$ be smooth compact Riemannian manifolds with 
  boundary. Let $\sigma\geq 0$ be a Steklov
  eigenvalue of the product manifold $M=M_1\times M_2$ with the eigenspace
  $F_\sigma \subset L^2(M)$. There exists a basis 
  $(u^{(1)},\dotsc,u^{(m)})$ of $F_\sigma$ such that each
  $u^{(j)}:M_1\times M_2\rightarrow\R$ is separable:
 \begin{equation*}
  u^{(j)}(x_1,x_2) = u^{(j)}_1(x_1) u^{(j)}_2(x_2),\qquad 1\leq j\leq m,
 \end{equation*}
 where $u^{(j)}_1:M_1\rightarrow\R$ and $u^{(j)}_2:M_2\rightarrow\R$.
\end{lem}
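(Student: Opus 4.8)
The plan is to bypass a direct separation-of-variables argument (which does not obviously yield a \emph{complete} system) by reducing everything to a family of auxiliary Robin eigenvalue problems on the two factors, where completeness of eigenfunctions is automatic. Fix $\sigma\ge 0$. For $i=1,2$, I would consider on $M_i$ the Robin problem $\Delta v=\mu v$ in $M_i$, $\partial_n v=\sigma v$ on $\partial M_i$, i.e. the self-adjoint operator attached to the quadratic form $q_i(v)=\int_{M_i}|\nabla v|^2-\sigma\int_{\partial M_i}v^2$ on $H^1(M_i)$. The compact trace inequality shows $q_i$ is bounded below and closed, so this operator has compact resolvent (Rellich's theorem), hence a discrete spectrum $\mu_1^{(i)}\le\mu_2^{(i)}\le\cdots\to+\infty$, bounded below, together with an orthonormal basis $\{v_k^{(i)}\}_{k\ge1}$ of $L^2(M_i)$ of eigenfunctions, which are smooth up to $\partial M_i$ by elliptic regularity.

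Next I would exploit that $\{v_k^{(1)} v_l^{(2)}\}_{k,l}$ is an orthonormal basis of the Hilbert space tensor product $L^2(M_1)\otimes L^2(M_2)\cong L^2(M)$, and establish, via Green's formula applied separately on each factor together with Fubini and the decomposition $\partial M=(\partial M_1\times M_2)\cup(M_1\times\partial M_2)$, the identity
\[
\int_M\nabla(v_k^{(1)} v_l^{(2)})\cdot\nabla\psi=(\mu_k^{(1)}+\mu_l^{(2)})\int_M v_k^{(1)} v_l^{(2)}\psi+\sigma\int_{\partial M}v_k^{(1)} v_l^{(2)}\psi,\qquad \psi\in H^1(M).
\]
Since each $v_k^{(1)} v_l^{(2)}$ is smooth on $\overline M$, this says precisely that it solves the Robin problem on $M$ with parameter $\sigma$ and eigenvalue $\mu_k^{(1)}+\mu_l^{(2)}$; in particular, whenever $\mu_k^{(1)}+\mu_l^{(2)}=0$ the product $v_k^{(1)} v_l^{(2)}$ is harmonic and satisfies $\partial_n u=\sigma u$ on $\partial M$, hence belongs to $F_\sigma$.

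For the reverse inclusion, given $u\in F_\sigma\subset H^1(M)$ --- which satisfies $\int_M\nabla u\cdot\nabla\psi=\sigma\int_{\partial M}u\psi$ for all $\psi\in H^1(M)$ --- I would take $\psi=v_k^{(1)} v_l^{(2)}$ in this relation and subtract the displayed identity evaluated at $\psi=u$, using the symmetry of the Dirichlet pairing; this gives $(\mu_k^{(1)}+\mu_l^{(2)})\langle u, v_k^{(1)} v_l^{(2)}\rangle_{L^2(M)}=0$. Therefore the expansion of $u$ in the orthonormal basis $\{v_k^{(1)} v_l^{(2)}\}$ is supported on the resonance set $S=\{(k,l):\mu_k^{(1)}=-\mu_l^{(2)}\}$, which is finite because both spectra are bounded below and tend to $+\infty$. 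Combining the two inclusions, $F_\sigma=\spn\{v_k^{(1)} v_l^{(2)}:(k,l)\in S\}$, a finite orthonormal family of separable functions, which is the required basis.

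The step I expect to be the real obstacle is precisely this last completeness argument, and it explains the use of Robin problems: products of Steklov eigenfunctions of $M_1$ and $M_2$ do \emph{not} span $F_\sigma$ in general, whereas the Robin problems carrying the common parameter $\sigma$ have eigenfunctions forming a complete orthonormal system on each factor, which is what makes the tensor-product argument go through. I would also record that the borderline case $\sigma=0$ (the Neumann problem) needs no change, the relevant lower bound on the $\mu_k^{(i)}$ then being $0$.
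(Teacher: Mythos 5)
Your proposal is correct and takes essentially the same route as the paper: both replace the Steklov problem by the auxiliary Robin problem with parameter $\sigma$ and use the tensor-product structure $L^2(M)\cong L^2(M_1)\otimes L^2(M_2)$ to obtain a separated eigenbasis for the Robin eigenspace with eigenvalue zero, which coincides with $F_\sigma$. You fill in the separation-of-variables details that the paper delegates to a textbook reference (Strauss); the only slip is the sign in ``$\Delta v=\mu v$'', which should read $-\Delta v=\mu v$ to match the quadratic form $q_i$ and the fact that the $\mu_k^{(i)}$ tend to $+\infty$.
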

\begin{proof}
 Consider the Robin problem with parameter $\sigma \ge 0$ on $M$
 \begin{equation*}
  \begin{cases}
   \Delta u + \lambda u = 0 & \text{in } M,\\
   \del_n u = \sigma u & \text{on } \del M.
  \end{cases}
 \end{equation*}

It is well known that the Robin problem on $M$
 admits separation of variables, since $L^2(M) = L^2(M_1) \otimes
 L^2(M_2)$ is a product space, see \emph{e.g.} \cite[Section 11.5]{Strauss}. 
 The number $\sigma\ge0$ is a Steklov eigenvalue of $M$ if and only if
 $0$ is an eigenvalue of the Robin problem with parameter
 $\sigma$, and the corresponding eigenspace is the same for both problems.
 Since one can find a separated eigenbasis
 for $F_\sigma$ by virtue of it being a Robin eigenspace on $M$, it then suffices to use
 the same basis for $F_\sigma$ when we consider it as a Steklov
 eigenspace.
\end{proof}
\begin{rem}

It is not easy to show  directly that the traces of all separable Steklov eigenfunctions form a basis in $L^2(\partial M)$,  since the boundary $\partial M$ of a product manifold is not itself a product manifold.
\end{rem}
\begin{rem}
  Lemma \ref{Lemma:separated} yields completeness of the system of
  separable Steklov eigenfunctions on cuboids. Surprisingly, a complete
  proof of this result has not appeared in the literature even in the
  case of rectangles. Note that the completeness argument for the square
  presented in \cite[Section 3]{girpoltJST} does not extend to arbitrary
  rectangles, contrary to the claim made in \cite[Section
    4]{AuchmutyCho} and in \cite{ArnoldTan}. Indeed, the proof given in
  \cite{girpoltJST} uses in a crucial way the diagonal symmetries of the
  square, which allow to use a connection to the vibrating beam problem
  via mixed Steklov-Neumann-Dirichlet problems on an isosceles right
  triangle.
\end{rem}

Let $d\in\N$ and consider the cuboid $\Omega$ with parameters
$a_1,\dotsc,a_d>0$.
Because $\Omega$ is a product of compact intervals, it follows from
Lemma \ref{Lemma:separated} that there exists a complete set
$\set{u_j}_{j\in \N_0}$ of separated Steklov eigenfunctions on $\Omega$. 
Consider a function $u:\Omega\rightarrow\R$
given by the product $u(x)=u_1(x_1)\dots u_d(x_d)$, 
where $u_j:~[-a_j,a_j]\rightarrow\R$. Requiring $u$ to be a Steklov
eigenfunction with eigenvalue $\sigma\geq 0$ leads to numbers
$\lambda_1,\lambda_2,\dotsc,\lambda_d\in\R$ such that
\begin{equation}\label{eq:steklovsepare}
\begin{cases}
 u_j'' + \lambda_ju_j=0\quad\mbox{ on }\quad (-a_j,a_j), \\
 u_j'(a_j) = \sigma u_j(a_j), \\
 -u_j'(-a_j) = \sigma u_j(-a_j),
\end{cases}
\end{equation}
subject to the harmonicity condition
\begin{equation}\label{eq:conditiondesomme}
 \sum_{j=1}^d \lambda_j = 0.
\end{equation}
The following lemma describes the eigenvalues and eigenfunctions of the auxilary
one-dimensional Steklov spectral problem \eqref{eq:steklovsepare} with a
parameter $\lambda\in\R$.
\begin{lem}\label{Lemma:oneD}
  Let $\lambda\in\mathbb{R}$. The non-zero solutions $\phi:[-a,a]\rightarrow\R$ of the differential equation $\phi''+\lambda\phi=0$ subject to the
  boundary conditions 
  \begin{gather*}
    \phi'(a)=\sigma\phi(a) \quad\mbox{ and }\quad
    -\phi'(-a)=\sigma\phi(-a)
  \end{gather*}
  for some constant $\sigma\geq 0$, are constant multiples of one the
  following functions:
  
  \begin{enumerate}[(i)]
  \item For $\lambda=0$, $\phi(t)\equiv 1$ and $\sigma=0$ or
    $\phi(t)=t\mbox{ and }\sigma=a^{-1}.$
    
\smallskip
\item For $\lambda=\alpha^2>0$, one of
    \begin{align*}
      \phi(t)&=\sin(\alpha t)\hspace{.5cm}\mbox{ with }\sigma = \alpha\cot(\alpha a),\\
      \phi(t)&=\cos(\alpha t)\hspace{.5cm}\mbox{ with }\sigma = -\alpha\tan(\alpha a).
    \end{align*}
    In other words, for each $\ell \in \set{0,1}$,
    $\sigma = \alpha \cot\left(\alpha a + \ell \frac \pi 2\right)$ is an
    eigenvalue.
  
\smallskip

  \item For $\lambda=-\beta^2<0$, one of
    \begin{align*}
      \phi(t)&=\sinh(\beta t)\hspace{.5cm}\mbox{ with }\sigma = \beta\coth(\beta a)\\
      \phi(t)&=\cosh(\beta t)\hspace{.5cm}\mbox{ with }\sigma = \beta\tanh(\beta a).
    \end{align*}
    In other words, for each $j \in \set{-1,1}$,
    $\sigma= \beta \tanh(\beta a)^{j}$ is an eigenvalue.
  \end{enumerate}
\end{lem}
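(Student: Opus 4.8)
The plan is to solve the linear ODE $\phi'' + \lambda\phi = 0$ on $[-a,a]$ explicitly in each of the three regimes $\lambda = 0$, $\lambda = \alpha^2 > 0$, $\lambda = -\beta^2 < 0$, write the general solution as a two-parameter family, and then impose the two Robin-type boundary conditions to cut this down to a one- or two-dimensional solution space. The crucial simplification is to \emph{exploit the symmetry} of the boundary conditions under the reflection $t \mapsto -t$: the differential operator and the pair of conditions $\phi'(a) = \sigma\phi(a)$, $-\phi'(-a) = \sigma\phi(-a)$ are invariant under $\phi(t) \mapsto \phi(-t)$, so the solution space splits into even and odd parts, and we may look for eigenfunctions that are purely even or purely odd. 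This immediately suggests the answer: in the $\lambda > 0$ case the even solution is $\cos(\alpha t)$ and the odd one is $\sin(\alpha t)$; in the $\lambda < 0$ case they are $\cosh(\beta t)$ and $\sinh(\beta t)$; in the $\lambda = 0$ case they are $1$ and $t$.

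First I would handle $\lambda = 0$: the general solution is $\phi(t) = c_1 + c_2 t$, so $\phi' \equiv c_2$, and the boundary conditions read $c_2 = \sigma(c_1 + c_2 a)$ and $-c_2 = \sigma(c_1 - c_2 a)$. Adding and subtracting these two equations gives $0 = 2\sigma c_1$ and $2 c_2 = 2\sigma c_2 a$. If $\sigma = 0$ we need $c_2 = 0$, yielding the constant $\phi \equiv 1$; if $\sigma > 0$ we need $c_1 = 0$ and $\sigma = a^{-1}$, yielding $\phi(t) = t$. Next, for $\lambda = \alpha^2 > 0$, write $\phi(t) = c_1 \cos(\alpha t) + c_2 \sin(\alpha t)$; then $\phi'(t) = \alpha(-c_1 \sin(\alpha t) + c_2\cos(\alpha t))$, and plugging $t = \pm a$ into the two boundary conditions and again taking sum and difference decouples the system into one equation for $c_1$ (the even part) and one for $c_2$ (the odd part). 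The even equation forces either $c_1 = 0$ or $\sigma = -\alpha\tan(\alpha a)$, and the odd equation forces either $c_2 = 0$ or $\sigma = \alpha\cot(\alpha a)$; since $\sigma$ is a single fixed number, a nonzero solution lies in exactly one of the two families (for generic $\alpha$), giving precisely the two listed possibilities. The case $\lambda = -\beta^2 < 0$ is identical with $\cos, \sin$ replaced by $\cosh, \sinh$ and the appropriate sign changes, producing $\sigma = \beta\coth(\beta a)$ for the odd solution $\sinh(\beta t)$ and $\sigma = \beta\tanh(\beta a)$ for the even solution $\cosh(\beta t)$.

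Finally I would record the reformulations in the ``in other words'' clauses: for $\lambda > 0$ the two cases $\sigma = \alpha\cot(\alpha a)$ and $\sigma = -\alpha\tan(\alpha a) = \alpha\cot(\alpha a + \pi/2)$ are unified as $\sigma = \alpha\cot(\alpha a + \ell\pi/2)$ with $\ell \in \{0,1\}$, using the identity $\cot(x + \pi/2) = -\tan x$; for $\lambda < 0$ the cases $\sigma = \beta\coth(\beta a)$ and $\sigma = \beta\tanh(\beta a)$ are written as $\sigma = \beta\tanh(\beta a)^j$ with $j \in \{-1,1\}$. The only mild subtlety — the step deserving a line of care rather than a genuine obstacle — is the degenerate situation in which both the even and odd conditions could be satisfied simultaneously, i.e. when $\sigma$ happens to equal both $\alpha\cot(\alpha a)$ and $-\alpha\tan(\alpha a)$; this would force $\cot(\alpha a) = -\tan(\alpha a)$, which has no real solution, so the even and odd families are genuinely disjoint and the ``non-zero solutions are constant multiples of one of the following functions'' phrasing is justified. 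Similarly $\coth$ and $\tanh$ never coincide. Since the argument is entirely elementary linear ODE theory, I do not anticipate any real difficulty; the main thing to get right is the bookkeeping of signs coming from the outward normal derivative being $-\partial_t$ at the left endpoint $t = -a$.
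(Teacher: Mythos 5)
Your proof is correct, and the paper itself does not supply a proof of this lemma, treating it as an elementary computation; so there is nothing in the paper to compare against. Your route — solve the constant-coefficient ODE explicitly in each sign regime, then observe that the pair of Robin conditions is invariant under $t\mapsto-t$, take sum and difference to decouple the even and odd amplitudes, and read off the resulting relation between $\sigma$ and the frequency — is exactly the standard argument and is carried out cleanly, including the correct sign bookkeeping for $-\phi'(-a)$ and the check that the even and odd branches never coincide (since $\cot x=-\tan x$ and $\coth x=\tanh x$ are both impossible over $\R$).
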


It will be useful to introduce a uniform notation for these
eigenvalues. Given $a>0$ and $\ell\in\{0,1\}$, let
\begin{gather*}
  T_{a,\ell}(x)=x\cot\left(ax + \ell \frac \pi 2\right)=
  \begin{cases}
    x\cot(ax)&\mbox{for }\ell=0,\\
    -x\tan(ax)&\mbox{for }\ell=1,
  \end{cases}
\end{gather*}
 and 
  \begin{gather*}
  H_{a,\ell}(x)
  =
  \begin{cases}
    x\coth(ax)&\mbox{for }\ell=0,\\
    x\tanh(ax)&\mbox{for }\ell=1.
  \end{cases}
\end{gather*}

It follows from Lemma \ref{Lemma:oneD} that separable eigenfunctions
are products of linear factors, trigonometric factors (the function
$\sin$ for $\ell=0$, and $\cos$ for
$\ell=1$) and hyperbolic factors (the function $\sinh$ for $\ell=0$,
and $\cosh$ for
$\ell=1$). A
careful accounting of these will be presented.

\subsection{Classification of eigenfunctions}
\label{classeig}
It follows from the previous paragraph that there is a complete set of
Steklov eigenfunctions which are given by products of linear, trigonometric
and hyperbolic factors. They are of the form
\begin{gather}\label{function:eigenseparated}
  u(x_1,\dots,x_d)=\prod_{i\in\tau_0}x_i\prod_{j\in\tau_1}\mbox{Trig}_j(\alpha_jx_j)\prod_{k\in\tau_2}\mbox{Hyp}_k(\beta_kx_k)
\end{gather}
where $\tau_0,\tau_1,\tau_2$ are disjoint subsets of
$S_d:=\{1,2,\dots,d\}$ such that
$\tau_0\cup\tau_1\cup\tau_2=S_d$, and each
$\mbox{Trig}_j\in\{\sin,\cos\}$ and $\mbox{Hyp}_k\in\{\sinh,\cosh\}$. In
order for this function to be a Steklov eigenfunction corresponding to
the eigenvalue $\sigma>0$, the function $u$ must be harmonic. This
amounts to the following restatement of condition
  \eqref{eq:conditiondesomme} in terms of the 
constants $\alpha_j$ and $\beta_k$:
\begin{gather}\label{equation:harmonic}
  \sum_{j \in \tau_1} \alpha_j^2 = \sum_{k \in \tau_2} \beta_k^2.
\end{gather}
This equation will be called the \emph{harmonicity condition}. Moreover, the spectral parameter $\sigma$ has to be the same on each
face of the cuboid. By Lemma \ref{Lemma:oneD} this translates into the
following equations, called the \emph{compatibility conditions}: 
\begin{gather}\label{equation:compatibility}
  \sigma=
  \begin{cases}
    a_i^{-1} &\quad\mbox{for }i\in\tau_0,\\
    T_{a_i,\ell(i)}(\alpha_i)&\quad\mbox{for }i\in\tau_1,\\
    H_{a_i,\ell(i)}(\beta_i)&\quad\mbox{for }i\in\tau_2.
  \end{cases}
\end{gather}
Here the function $\ell:S_d\rightarrow\{0,1\}$ is used
to specify which trigonometric and hyperbolic functions are used,
according to the convention introduced in Lemma \ref{Lemma:oneD}.
The corresponding eigenfunction \eqref{function:eigenseparated} is
then given precisely by the product of the factors $u_i:[-a_i,a_i]\rightarrow\R$
which are specified by
\begin{gather}\label{function:separatedfactors}
  u_i(x_i)=
  \begin{cases}
    \Trig_{\ell(i)}(\alpha_i x_i)&\mbox{ for }i\in\tau_1,\\
    \Hyp_{\ell(i)}(\beta_ix_i)&\mbox{ for }i\in\tau_2,\\
    x_i&\mbox{ otherwise},
  \end{cases}
\end{gather}
where $\Trig_0=\sin$, $\Trig_1=\cos$, $\Hyp_0=\sinh$ and $\Hyp_1=\cosh$.

Note that any separated eigenfunction that
has a linear factor $u_j(x_j)=x_j$ contributes the eigenvalue
$\sigma=a_j^{-1}$ to the spectrum. Since  the multiplicity of each
eigenvalue is finite, this can occur at most a finite number of
times. We summarize the above mentioned facts in the following theorem.
\begin{thm}\label{thm:separation}

Let $p\in\{1,\dotsc,d-1\}$, and  let $\CT_p$ be the
    set of all ordered bipartitions 
    $\tau\krn=\krn(\tau_1,\tau_2)$ of $\set{1,\dotsc,d}$ in the sets of cardinality $p$
    and $q = d-p$. For each $\tau\in\CT_p$ and any
    $\ell:\tau_1\cup\tau_2\rightarrow \{0,1\}$, let
    $S_{\tau,\ell}$ be the  set of all numbers $\sigma>0$ for which
    there exist positive numbers $\alpha_i$ for $i\in\tau_1$ and
    $\beta_j$, for $j\in\tau_2$, which solve
    \begin{equation*}
      \sigma=T_{a_i,\ell(i)}(\alpha_i)=H_{a_j,\ell(j)}(\beta_j) \qquad\forall i \in \tau_1, j \in \tau_2
    \end{equation*}
    subject to the constraint
    \begin{equation*}
      \sum_{i \in \tau_1} \alpha_i^2 = \sum_{j \in \tau_2} \beta_j^2.
    \end{equation*}
Denote also by $S_{0}$ the collection of Steklov eigenvalues corresponding to separated eigenfunctions having a linear factor.
Then the Steklov spectrum of a cuboid $\Omega$ is given by the union of $S_0$ which contains at most finitely many elements,  
and the families $S_{\tau,l}$ for all possible choices of $\tau$ and $\ell$.
\end{thm}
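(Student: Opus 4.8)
The plan is to assemble the facts established in Subsections~\ref{sepvar}--\ref{classeig} and then dispose of the two degenerate partitions. First I would invoke completeness: by Lemma~\ref{Lemma:separated}, applied to the $d$-fold product $\Omega=[-a_1,a_1]\times\dots\times[-a_d,a_d]$ (equivalently, because separation of variables holds for the Robin problem on $L^2(\Omega)=\bigotimes_{i=1}^{d}L^2([-a_i,a_i])$ and $F_\sigma$ coincides with the kernel of that Robin operator), every Steklov eigenspace $F_\sigma$, $\sigma\ge 0$, has a basis of separated eigenfunctions $u=\prod_i u_i$, with each $u_i$ a nonzero solution of the one-dimensional problem \eqref{eq:steklovsepare} for some parameter $\lambda_i$, subject to $\sum_i\lambda_i=0$. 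In particular, $\sigma\in\spec(\Omega)$ precisely when, for some choice of parameters $\lambda_i$ with $\sum_i\lambda_i=0$, each of the problems \eqref{eq:steklovsepare} has a nonzero solution. Feeding in the classification of Lemma~\ref{Lemma:oneD}, such a $u$ has the form \eqref{function:eigenseparated} for some partition $S_d=\tau_0\cup\tau_1\cup\tau_2$ into disjoint sets and some $\ell\colon\tau_1\cup\tau_2\to\{0,1\}$; the constraint $\sum_i\lambda_i=0$ turns into the harmonicity condition \eqref{equation:harmonic}, and equality of the spectral parameter at all $2d$ faces turns into the compatibility conditions \eqref{equation:compatibility}. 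Hence the Steklov spectrum is the union, over all $(\tau_0,\tau_1,\tau_2)$ and all admissible $\ell$, of the sets of $\sigma$ for which \eqref{equation:harmonic}--\eqref{equation:compatibility} are solvable with $\alpha_i,\beta_j>0$.

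Next I would split this union according to whether $\tau_0$ is empty. If $\tau_0\ne\emptyset$, pick $j\in\tau_0$: then \eqref{equation:compatibility} forces $\sigma=a_j^{-1}$, so every Steklov eigenvalue coming from a separated eigenfunction with a linear factor lies in the fixed finite set $\{a_1^{-1},\dots,a_d^{-1}\}$. Adding the trivial eigenvalue $\sigma=0$, realized by the constant eigenfunction (the $\lambda=0$, $\sigma=0$ alternative of Lemma~\ref{Lemma:oneD}, which I would treat as a degenerate linear factor in each variable), one obtains the set $S_0$, which therefore has at most $d+1$ elements; this is the only place the finiteness assertion of the theorem is needed, and it is immediate.

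Finally I would handle the partitions with $\tau_0=\emptyset$, i.e.\ $\tau_1\cup\tau_2=S_d$. Here the harmonicity condition \eqref{equation:harmonic} excludes the degenerate cases: if $\tau_2=\emptyset$ it reads $\sum_{i\in\tau_1}\alpha_i^2=0$, impossible for positive $\alpha_i$, and by symmetry $\tau_1=\emptyset$ is impossible too. Therefore $p:=\abs{\tau_1}$ satisfies $1\le p\le d-1$, the ordered pair $\tau=(\tau_1,\tau_2)$ belongs to $\CT_p$, and \eqref{equation:harmonic}--\eqref{equation:compatibility} are exactly the conditions defining $S_{\tau,\ell}$. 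Conversely, for every $p\in\{1,\dots,d-1\}$, $\tau\in\CT_p$, $\ell$, and $\sigma\in S_{\tau,\ell}$, the function \eqref{function:eigenseparated} built from a corresponding choice of $\alpha_i,\beta_j$ is harmonic by \eqref{equation:harmonic} and satisfies $\del_n u=\sigma u$ on $\del\Omega$ by \eqref{equation:compatibility}, hence $\sigma\in\spec(\Omega)$. Combining the two cases yields $\spec(\Omega)=S_0\cup\bigcup_{p=1}^{d-1}\bigcup_{\tau\in\CT_p}\bigcup_{\ell}S_{\tau,\ell}$.

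There is no genuine obstacle here beyond the completeness of the separated eigenfunctions, which is precisely Lemma~\ref{Lemma:separated} and is the real content; everything else is bookkeeping. The one point I would be careful about is checking that the dichotomy ``carries a linear factor'' versus ``$\tau_0=\emptyset$'' genuinely partitions the separated eigenfunctions — in particular that the constant eigenfunction at $\sigma=0$ is accounted for — so that no eigenvalue is missed or counted twice when the union is assembled.
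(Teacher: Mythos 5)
Your argument is correct and follows the same route as the paper, which presents Theorem~\ref{thm:separation} as a summary of the preceding discussion in Subsections~\ref{sepvar}--\ref{classeig}: iterate Lemma~\ref{Lemma:separated} (through the Robin problem) to get a complete separated eigenbasis, classify each one-dimensional factor via Lemma~\ref{Lemma:oneD}, split by whether $\tau_0=\varnothing$, and note that the harmonicity condition \eqref{equation:harmonic} forces both $\tau_1$ and $\tau_2$ to be nonempty when $\tau_0=\varnothing$. One small remark: the paper's finiteness claim for $S_0$ is argued slightly more strongly (finitely many \emph{eigenfunctions} with a linear factor, via finite multiplicity of each $a_j^{-1}$, which is what Theorem~\ref{thm:eigenlattice}(i) actually uses later), whereas your observation that $S_0\subseteq\{0,a_1^{-1},\dotsc,a_d^{-1}\}$ establishes the weaker set-finiteness that Theorem~\ref{thm:separation} literally asserts — both are fine here, but be aware that the stronger statement is the one carried forward.
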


\subsection{Reduction to approximate lattice counting}
\label{reduc}
We will now give a  more precise description of the
spectrum by constructing a correspondence between the Steklov eigenvalues of cuboids and the vertices of certain lattices. 

Let $\Omega$ be a cuboid with parameters $a_1,\dotsc,a_d$. Let 
$p\in\{1,\dotsc,d-1\}$ represent the number of trigonometric
factors of a separated eigenfunction without linear factors. Each bipartition
$\tau=(\tau_1,\tau_2)\in \CT_p$ then corresponds  to a separated
eigenfunction of the form
\begin{gather}\label{eigenfunction:formbipart}
  u(x_1,\dots,x_d)=\prod_{j\in\tau_1}\mbox{Trig}_j(\alpha_jx_j)\prod_{k\in\tau_2}\mbox{Hyp}_k(\beta_kx_k)
\end{gather}
 Let  $\N_0=\{0,1,2,\dotsc\}$ be the set of nonnegtive integers.
Given $\bn\in\N_0^p$,  let
$$I_\bn=I_{\bn,p,\tau}:=
\prod_{i\in\tau_1}
\left( \frac{n_i \pi}{2 a_i},  \frac{(n_i+1) \pi}{2 a_i}\right]\subset\R^p.$$

The boxes $I_\bn$ are fundamental domains of a lattice. The following theorem shows that each box gives rise to a cluster of at most $2^q$ eigenvalues and, moreover, the boxes
$I_\bn$ with $\bn \in \N^p$ and $|\bn|$ large enough correspond to precisely $2^q$ eigenvalues.
\begin{thm}\label{thm:eigenlattice}
Given $p\in \{1,\dotsc,d-1\}$, and $q = d-p$, let $\tau \in \CT_p$ specify the
position of trigonometric and hyperbolic factors of eigenfunctions
of the form \eqref{eigenfunction:formbipart}. The following assertions hold:

\smallskip

\noindent {\rm (i)} Eigenfunctions of the form \eqref{eigenfunction:formbipart} form a complete system of Steklov eigenfunctions on a cuboid up to a finite number of eigenfunctions containing linear
factors.

\smallskip

\noindent {\rm (ii)} For each $\bn \in \N^p$, there exist at most $2^q$ eigenfunctions of
the form \eqref{eigenfunction:formbipart} with $\balpha \in I_\bn$.

\smallskip
\noindent {\rm (iii)} There exists a number $N \in\N$, such that for every $\bn \in \N^p$ with $|\bn| > N $, there are \emph{exactly} $2^q$ eigenfunctions of the form \eqref{eigenfunction:formbipart} with
$\balpha\in I_\bn$. The corresponding eigenvalues
$\sigma_\bn^{(k)}$, with $k \in \set{1,\dotsc,2^q}$,
satisfy
\begin{equation} \label{eq:sigmabnk}
\sigma_\bn^{(k)} = \frac{|\balpha_\bn|}{\sqrt q} + \bigo{|\bn|^{-\infty}}
\end{equation}
for some $\balpha_\bn \in I_\bn$.

\smallskip

\noindent {\rm (iv)} There exist only finitely many eigenfunctions of the form \eqref{eigenfunction:formbipart} such that $\bn \in \N_0^p \setminus \N^p$.
For each $\bn \in \N_0^p \setminus \N^p$, there are at most $2^q$ eigenfunctions of the form \eqref{eigenfunction:formbipart} with $\balpha \in I_\bn$.
\end{thm}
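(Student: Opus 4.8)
The plan is to reduce, for each lattice box $I_\bn$ and each choice of hyperbolic/trigonometric types, the compatibility and harmonicity conditions to a single scalar equation that is strictly monotone in $\sigma$, and then to compare growth rates in $|\bn|$. I would dispose of \textbf{(i)} first, as a bookkeeping consequence of what precedes: Lemma~\ref{Lemma:separated} gives a complete system of separated Steklov eigenfunctions, Lemma~\ref{Lemma:oneD} forces each one-dimensional factor to be linear, trigonometric or hyperbolic — so each is of the form \eqref{function:eigenseparated} — and for $\sigma>0$ with no linear factor the harmonicity condition \eqref{equation:harmonic} excludes $\tau_1=\emptyset$ (it would force every $\beta_k=0$) and $\tau_2=\emptyset$ (it would force every $\alpha_j=0$), so $1\le|\tau_1|\le d-1$ and the eigenfunction has the form \eqref{eigenfunction:formbipart}. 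The only remaining separated eigenfunctions are the constant and those carrying a linear factor $x_j$, and each of the latter contributes only the eigenvalue $a_j^{-1}$, of finite multiplicity; hence there are finitely many.

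The technical core is a one-variable lemma. Fix $a>0$, $n\in\N_0$, and write $\theta=ax+\ell\pi/2$; a direct computation using $\sin 2\theta\le 2\theta$ (and $\theta>\pi$ in the case $\ell=1$, which is why $n\ge1$ is genuinely needed there) shows that on $I_n=(n\pi/(2a),(n+1)\pi/(2a)]$: if $\ell\equiv n\pmod 2$ then $T_{a,\ell}$ is continuous and \emph{strictly decreasing}, with image $[0,\infty)$ when $n\ge1$ and $[0,1/a)$ when $n=0$; if $\ell\not\equiv n\pmod 2$ then $T_{a,\ell}(x)=\sigma$ has no solution $x\in I_n$ for $\sigma>0$. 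Thus for $\sigma>0$ the value $\ell(i)$ is forced by the parity of $n_i$, there is a unique $\alpha_i=\alpha_i(\sigma)\in I_{n_i}$ with $T_{a_i,\ell(i)}(\alpha_i)=\sigma$, it is strictly decreasing in $\sigma$, and when $n_i=0$ its existence requires $\sigma<1/a_i$. Dually, $H_{a,0}(x)=x\coth(ax)$ and $H_{a,1}(x)=x\tanh(ax)$ are strictly increasing on $(0,\infty)$ with images $(1/a,\infty)$ and $(0,\infty)$, so $\beta_j=\beta_j(\sigma,\ell(j))$ is unique, strictly increasing in $\sigma$, and defined exactly for $\sigma>\sigma_0(\ell):=\max\{a_j^{-1}:j\in\tau_2,\ \ell(j)=0\}$ (with $\sigma_0(\ell)=0$ if $\ell\equiv1$ on $\tau_2$). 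Now fix $\bn$ and a choice $\ell|_{\tau_2}\in\{0,1\}^{\tau_2}$ (recall $\ell|_{\tau_1}$ is forced): an eigenfunction of the form \eqref{eigenfunction:formbipart} with $\balpha\in I_\bn$ and eigenvalue $\sigma$ corresponds, on the interval where both sides are defined, to a solution of
\[
  F_\bn(\sigma):=\sum_{i\in\tau_1}\alpha_i(\sigma)^2=\sum_{j\in\tau_2}\beta_j(\sigma)^2=:G_\ell(\sigma) .
\]
Since $F_\bn$ is strictly decreasing and $G_\ell$ strictly increasing, there is at most one such $\sigma$; as there are $2^{|\tau_2|}=2^q$ choices of $\ell|_{\tau_2}$, this proves \textbf{(ii)}, and the same count yields the bound ``at most $2^q$'' in \textbf{(iv)}.

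For \textbf{(iii)}, when $\bn\in\N^p$ the equation $F_\bn=G_\ell$ lives on $(\sigma_0(\ell),\infty)$, and $F_\bn-G_\ell$ is strictly decreasing to $-\infty$, so a (unique) solution exists iff $\lim_{\sigma\to\sigma_0(\ell)^+}(F_\bn-G_\ell)(\sigma)>0$. But $F_\bn(\sigma)\ge\sum_{i\in\tau_1}(n_i\pi/(2a_i))^2\gtrsim|\bn|^2$ for all $\sigma$, while $\lim_{\sigma\to\sigma_0(\ell)^+}G_\ell(\sigma)$ is bounded by a constant depending only on $a_1,\dots,a_d$ and $d$ (since $\sigma_0(\ell)\le\max_k a_k^{-1}$ and each $\beta_j$ is bounded on a fixed bounded range of $\sigma$). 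Hence there is $N$ such that for $|\bn|>N$ all $2^q$ branch choices give a solution, i.e.\ exactly $2^q$ eigenfunctions. For the estimate \eqref{eq:sigmabnk}: at the solution $\sigma=\sigma_\bn^{(k)}$ harmonicity reads $|\balpha_\bn|^2=\sum_{i\in\tau_1}\alpha_i^2=\sum_{j\in\tau_2}\beta_j^2$; the expansions $\coth t=1+\bigo{e^{-2t}}$ and $\tanh t=1-\bigo{e^{-2t}}$ give $H_{a,\ell}(x)=x\bigl(1+\bigo{e^{-2ax}}\bigr)$, hence $\beta_j=H_{a_j,\ell(j)}^{-1}(\sigma)=\sigma+\bigo{\sigma^{-\infty}}$, and since $\alpha_i\asymp n_i$ on $I_{n_i}$ and $\sigma\asymp|\bn|$ (again from the harmonicity relation) we obtain $|\balpha_\bn|^2=q\,(\sigma_\bn^{(k)})^2+\bigo{|\bn|^{-\infty}}$, which is \eqref{eq:sigmabnk}.

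Finally, \textbf{(iv)}: if $n_i=0$ for some $i\in\tau_1$, then every eigenvalue $\sigma$ attached to a box with this $\bn$ satisfies $\sigma<1/a_i\le\max_k a_k^{-1}$; as $G_\ell(\sigma)$ is then bounded by a constant depending only on the $a_k$ and $d$, while harmonicity gives $|\bn|^2\lesssim F_\bn(\sigma)=G_\ell(\sigma)$, the index $|\bn|$ is bounded, only finitely many such $\bn$ occur, and each carries at most $2^q$ eigenfunctions by the reduction above. The main obstacle is precisely the one-variable lemma underlying everything — determining the admissible parity of $\ell(i)$ for a given $n_i$ and establishing the strict monotonicity of $T_{a,\ell}$ on $I_n$, the $\ell=1$ branch being the delicate case where $n\ge1$ is used — after which (ii)--(iv) are merely a comparison of growth rates: $F_\bn\asymp|\bn|^2$ while $G_\ell\asymp\sigma^2$ for large arguments.
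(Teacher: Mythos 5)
Your argument is correct and, in its core, genuinely cleaner than the paper's. Where the paper constructs a curve $C_H\subset\R^q$ from the hyperbolic compatibility conditions, maps it to a curve $C_T\subset\R^p$, and intersects with the sphere condition $|\balpha|=|\bbeta|$, you reduce directly to a scalar equation $F_\bn(\sigma)=G_\ell(\sigma)$ with $F_\bn$ strictly decreasing and $G_\ell$ strictly increasing in $\sigma$. This makes uniqueness a one-line monotonicity statement, and existence a comparison of values at the left endpoint of the domain, which is the same mechanism as the paper's comparison of $\inf_{\bx\in I_\bn}|\bx|$ with $c_\ell$ but scalarized. The parity lemma you isolate (that $\ell(i)$ is forced by $n_i\bmod 2$, with $T_{a,\ell}$ strictly decreasing onto $[0,\infty)$ for $n\ge1$ and onto $[0,1/a)$ for $n=0$) is exactly what the paper encodes implicitly through the boxes $I_{2\bn+\bm}$; making it explicit is a real gain in readability. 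Parts (i), (ii), (iv) and the existence half of (iii) are complete.

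One point falls short of the statement as written. Your derivation of \eqref{eq:sigmabnk} produces $\sigma_\bn^{(k)} = |\balpha^{(k)}|/\sqrt q + \bigo{|\bn|^{-\infty}}$ with $\balpha^{(k)}\in I_\bn$ depending on $k$ (each choice of $\ell|_{\tau_2}$ gives its own solution, hence its own $\balpha$), whereas the theorem asserts the existence of a \emph{single} $\balpha_\bn\in I_\bn$ serving all $2^q$ eigenvalues; this is the ``clustering'' needed later to define quasi-eigenvalues. The missing step is to observe that for $\ell,\ell'$ agreeing on $\tau_1$ one has $G_\ell(\sigma)-G_{\ell'}(\sigma)=\bigo{\sigma^{-\infty}}$, while $(F_\bn-G_\ell)'(\sigma)\lesssim -\sigma$, so the two roots of $F_\bn=G_\ell$ and $F_\bn=G_{\ell'}$ differ by $\bigo{\sigma^{-\infty}}=\bigo{|\bn|^{-\infty}}$; since $\alpha_i(\cdot)$ is Lipschitz with bounded constant on the relevant ranges, the corresponding $\balpha^{(k)}$ are within $\bigo{|\bn|^{-\infty}}$ of one another, and any one of them can serve as $\balpha_\bn$. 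This is entirely compatible with your scalar framework and is a short addition, but it is logically required. (Also, the parenthetical ``$\theta>\pi$ in the case $\ell=1$'' is consistent with your shifted variable $\theta=ax+\ell\pi/2$, but the reader will be puzzled unless you say so; stated for the unshifted variable it would be false for $n=1$.)
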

Assertions (ii) and (iii) essentially say that up to a finite number of boxes, there is always exactly $2^q$ solutions in the box $I_\bn$, while assertion (iv) says that while some boxes touching the coordinate hyperplanes $\set{x_j = 0}$ might contain solutions, this will only happen a finite number of times. This means that while all the three cases are needed to fully describe the spectrum, asymptotically we can only count eigenvalues described by (iii), up to a $\bigo 1$ error.

\begin{proof}[Proof of Theorem \ref{thm:eigenlattice}]

Assertion (i) is a direct consequence of  Lemmas \ref{Lemma:separated} and \ref{Lemma:oneD}. 
In order to prove assertion (ii), for each $\ell : S_d \to \set{0,1}$ and $\bn \in
  \N^p$ we will show that there exists  at most one
  eigenfunction. Up to a small error, the corresponding eigenvalue will be equal to the norm
    of a point which is located in the box $I_{2\bn+\bm,p,\tau}$,
    where $\bm\in\{0,1\}^p$ is determined by the restriction of $\ell$
    to $\tau_1$. Together with the choice of $\ell$ on $\tau_2$,
    this will account for clusters of at most $2^q$ eigenvalues corresponding
    to each of the boxes $I_{\bn}$.

  \subsubsection*{Construction of  an eigenfunction.}
  For each $i\in\tau_2$, the function
  $\beta_i\mapsto H_{a_i,\ell(i)}(\beta_i)$, is increasing and
  positive for $\beta_i > 0$. It satisfies 
  $H_{a_i,\ell(i)}(\beta_i)=\beta_i+O(\beta_i^{-\infty})$ as $\beta_i\to\infty$
  and
  $$\lim_{\beta_i\to 0}H_{a_i,\ell(i)}(\beta_i)=
  \begin{cases}
    \frac{1}{a_i}  &\mbox{ if }\ell(i)=0,\\
    0&\mbox{ if }\ell(i)=1.
  \end{cases}$$
  This implies that the equations
  \begin{gather}\label{curve:hyp}
    H_{a_i,\ell(i)}(\beta_i)=H_{a_j,\ell(j)}(\beta_j) \qquad\forall i,j \in \tau_2
  \end{gather}
  define a connected curve $C_H=C_{H,p,\tau}\subset\R^q$ (the index $H$ stands for
  ``hyperbolic'') which behaves like the
  diagonal
  $$\{\bbeta\in\R^q\,:\,\beta_i=\beta_j \mbox{ for each } i,j\in\tau_2\}$$
  to infinite order as $|\bbeta|\to\infty$.
  The common value given by
  equation~(\ref{curve:hyp}) increases monotonically from some $c\geq 0$
  to infinity along the curve $C_H$ as it moves away from the origin.
  In fact, this  non-negative constant is
  $$c_\ell=\max\{0,a_i^{-1}\,:i\in\tau_2, \ell(i)=0\}.$$
  On the other hand, for each
  $i\in\tau_1$ the restricted function  
 \begin{equation} \label{eq:restriction} 
 T_{a_i, \ell(i)} :\left( \frac{n_i \pi}{ a_i}+\frac{\ell(i) \pi}{2 a_i}, \frac{n_i \pi}{ a_1} + \frac{(\ell(i) + 1)\pi}{2a_i} \right] \longrightarrow [0,\infty),
 \end{equation}
  is decreasing and surjective. Hence, for each point
  ${\bbeta}\in C_H\subset\R^q$, there exist unique numbers
  $$\alpha_i(\bbeta)\in
  \left( \frac{n_i \pi}{ a_i}+\frac{\ell(i) \pi}{2 a_i}, \frac{n_i
      \pi}{ a_1} + \frac{(\ell(i) + 1)\pi}{2a_i} \right]\qquad (\mbox{ for each }i\in\tau_1)$$ 
  such
  that
  \begin{equation}\label{eq:curve}
   T_{a_i,\ell(i)}(\alpha_i)=H_{a_j,\ell(j)}(\beta_j) \qquad\forall i \in \tau_1, j \in \tau_2.
  \end{equation}
  This defines an image curve $C_T\subset\R^p$ given by
  $$C_T=\{\alpha_i(\bbeta)\,:\,i\in\tau_1, \bbeta\in C_H\}.$$
  In other words, we have defined a continuous map
  $\balpha:C_H\longrightarrow C_T$ between these two curves. 
  It follows from (\ref{eq:restriction}) that the curve $C_T$ is
  contained in the box $I_{2\bn+\bm}$,
  where $\bm\in\{0,1\}^p$ is determined by the restriction of $\ell$
  to $\tau_1$. In particular, as the value of $|\bbeta|$ increases from
  its minimal value to $+\infty$ along the curve $C_H$, the value of $|\balpha(\bbeta)|$ is
  contained in the compact interval
  \[
   \left[\inf_{\bx \in I_{2\bn + \bm}}|\bx|, \sup_{\bx \in I_{2\bn + \bm}}|\bx| \right] \subset (0,\infty).
  \]
  Hence, if $\inf_{\bx \in I_{2\bn+\bm}}|\bx| > c_\ell$ there will be a point
  $\bbeta\in C_H$ such that $\balpha=\alpha(\bbeta)$ satisfy
  $|\balpha|\krn=\krn|\bbeta|$.  This amounts to saying that any of the common
  values given by \eqref{eq:curve} is a Steklov eigenvalue of the
  cuboid. It follows from monotonicity of each factors in Equation
  (\ref{eq:curve}) that this solution $(\balpha,\bbeta)$ is unique.
  \begin{figure}
     \includegraphics[width=10cm]{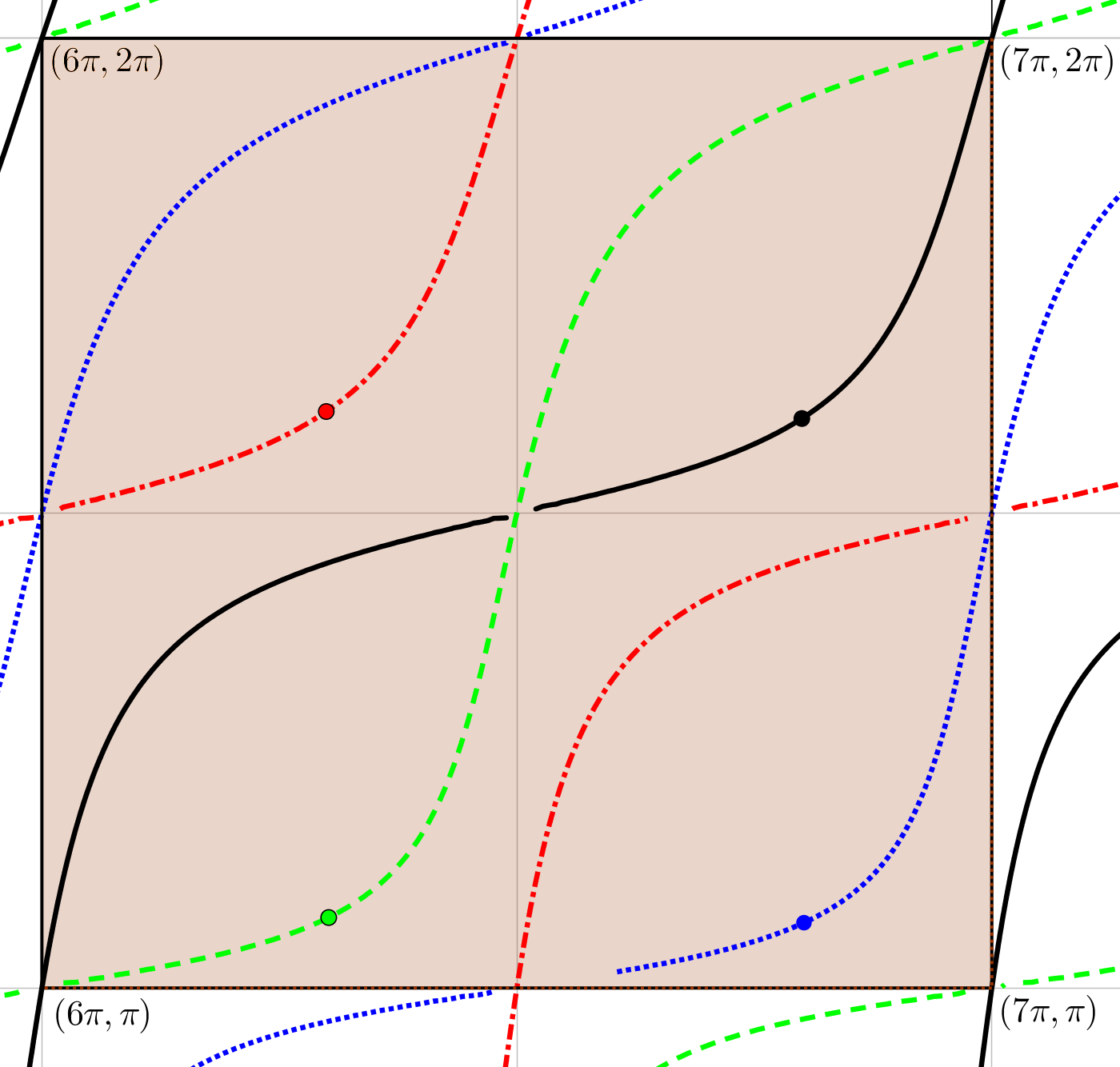}
     \caption{Various $C_T$ curves in the situation where $d=3$, $p=2$ and $\tau_1=\{1,2\}$.}
     \label{figure:curveCT}
   \end{figure}
   \begin{figure}
     \includegraphics[width=10cm]{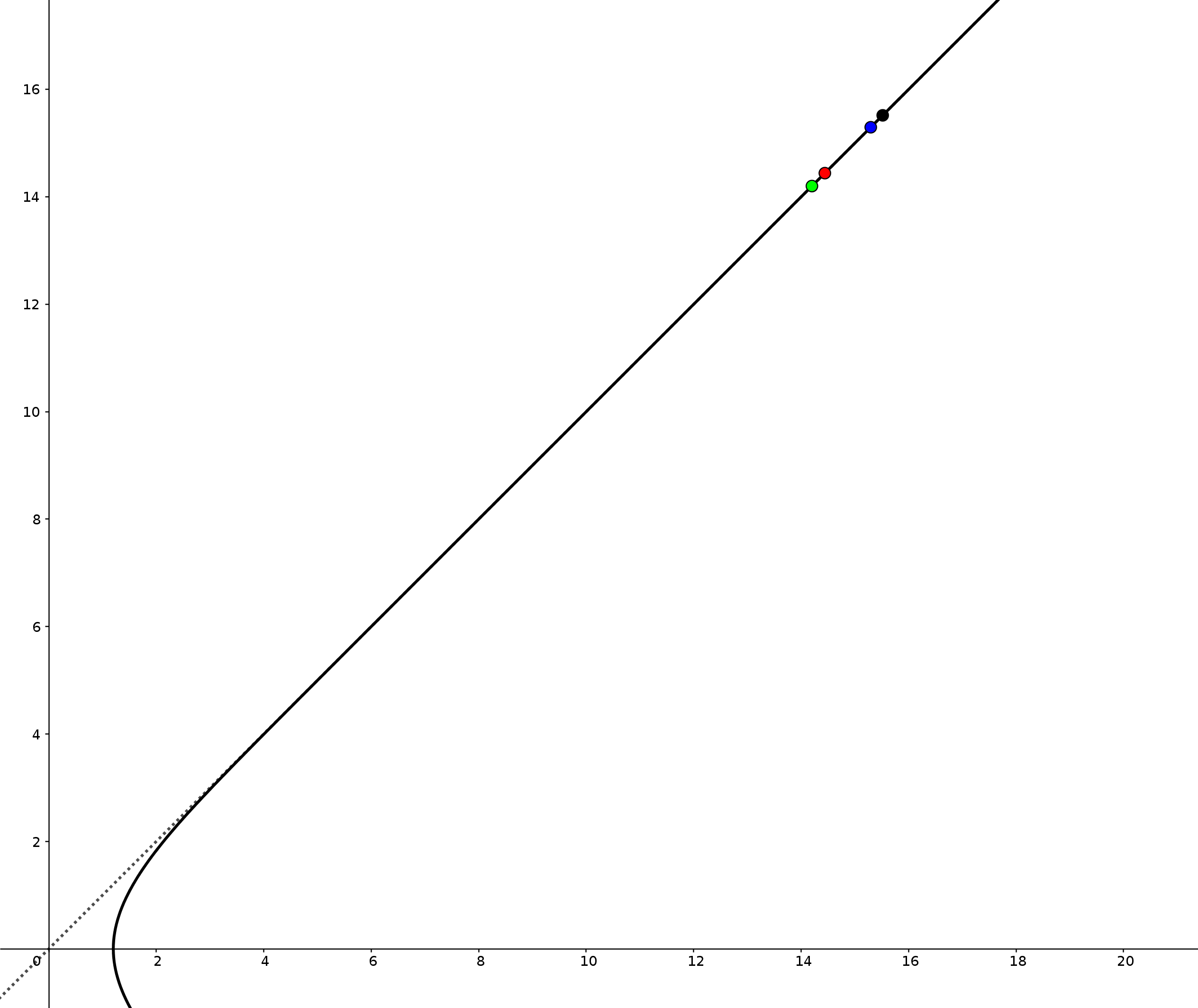}
     \caption{The curve $C_H$ corresponding to $\ell(3)=1$ and
       $\ell(4)=0$: $x_3\tanh(x_3)=x_4\coth(x_4)$.}
     \label{figure:curveCH}
   \end{figure}
   \begin{rem}
       Let   $d=4$,  $a_1=a_2=a_3=a_4=1$, $p=2$ and
       $\tau_1=(1,2)$.  In this case, Figure \ref{figure:curveCT} shows the
       intersections of the four  different curves $C_T$ with the boxes
       $I_{2\bn+\bm}\subset\R^2$ for $\bn=(12,2)$ and $\bm\in\{0,1\}^2$. The
       corresponding curve $C_H$ for the particular choice of
      the  hyperbolic factor given by $\ell(3)=1$ and $\ell(4)=0$,   is
       shown on Figure \ref{figure:curveCH}. On each
       of these curves, the marked point  corresponds to the solution of the compatibility equations. 
Note that the curves $C_T$ intersect two of the boxes,  and the functions $T_{a_i,\ell(i)}$ defined on them are positive in one box and negative in the other. 
The solutions of the compatibility equations  lie on the positive side.
     \end{rem}

  We now turn to assertion (iii). Observe first that there is a uniform bound on $c_\ell$ hence there is a $N$ such that if $|\bn| > N$ then
  \[
   \inf_{\bx \in I_\bn}|\bx| > c_\ell.
  \]
From the previous discussion this ensures that there are exactly $2^q$ solutions in the box $I_\bn$. We proceed in two steps for the more quantitative part of the statement. First, we prove that eigenvalues do take the form \eqref{eq:sigmabnk}, and then we show that for all $k \in \set{1,2\dotsc,2^q}$ the same $\balpha_\bn$ works.
  \subsubsection*{Localisation}
  Fix the restriction $\ell:\tau_2\rightarrow\{0,1\}^q$ for the moment.
  The various choices of trigonometric factors (represented by the
  choice of $\ell:\tau_1\rightarrow\{0,1\}$) gives rises to exactly one solution
  $\balpha_{2\bn+\bm}$ in each of the of the $2^p$ boxes $I_{2\bn+\bm}$, where $\bm$
  runs over all choices of $\bm\in\{0,1\}^p$.
  For each of these $\bm$, 
  the corresponding eigenvalue is
  given by any of the functions appearing in Equation \eqref{eq:curve} evaluated
  on any of the 
  coordinates of $(\balpha_{2\bn+\bm},\bbeta_{2\bn+\bm})\in\R^p\times\R^q$. 
  It follows that for each $j\in\tau_2$, and $\bn\in\N^q$
  \begin{equation*}
    \begin{aligned}
      |\bbeta_\bn|^2 &= \sum_{i \in \tau_2} \beta_{\bn,i}^2 = q \beta_{\bn,j}^2 + \bigo{|\bn|^{-\infty}}.
    \end{aligned}
  \end{equation*}
  Hence for each $j\in\tau_2$,
  $$\beta_{\bn,j}=\frac{|\bbeta_{\bn}|}{\sqrt{q}}+ \bigo{|\bn|^{-\infty}}.$$
  The corresponding eigenvalue is therefore given, for any $j \in \tau_2$, by
  \begin{equation*}
    \sigma_{\bn} = H_{a_j,\ell(j)}(\beta_{\bn,j}) =
    \frac{|\bbeta_\bn|}{\sqrt q} + \bigo{|\bn|^{-\infty}}=
    \frac{|\balpha_\bn|}{\sqrt q} + \bigo{|\bn|^{-\infty}},
  \end{equation*}
  as was announced.

\subsubsection*{Clustering}
If $\ell, \ell':S_d\rightarrow\{0,1\}$ agree on $\tau_1$, it follows from
\begin{equation*}
 H_{a_j,\ell(j)}(x) - H_{a_{j},\ell'(j)}(x) = \bigo{x^{-\infty}}
\end{equation*}
that the corresponding eigenvalues satisfy
\begin{equation*}
  \sigma_{\bn,\ell} - \sigma_{\bn,\ell'} = \bigo{|\bn|^{-\infty}}.
\end{equation*}
The various choices of the restriction $\ell:\tau_2\rightarrow\{0,1\}$
therefore lead to $2^q$ eigenvalues satisfying
$$\sigma_\bn^k=\frac{|\balpha_\bn|}{\sqrt q} +\bigo{|\bn|^{-\infty}}\qquad\mbox{for }k=1,\dotsc,2^q.$$

\subsubsection*{Exceptional eigenvalues}
For $\bn \in \N_0^p \setminus \N^p$ we have that $n_i =0 $
for at least one $i \in \tau_1$. On the interval
$\left(0,\frac{\pi}{2a_i}\right]$, the function $T_{a_i,0}$ is
positive while $T_{a_i,1}$ is negative, hence an eigenvalue can only
correspond to $\ell(i) = 0$. In this case, the range of $T_{a_i,0}$ is
  $\left[0,a_i^{-1}\right)$. A corresponding eigenvalue is therefore
  bounded above by $a_i^{-1}$. There is only a finite number of these, proving assertion (iv).

  This concludes the proof of Theorem \ref{thm:eigenlattice}.
\end{proof}


In the next section we will take up the task of understanding the
asymptotic behavior of the counting function $N(\sigma)$.

\section{Eigenvalue asymptotics}\label{section:asymptotics}

The goal of Section \ref{section:asymptotics}  is to prove Theorem \ref{thm:main}. The plan
is to represent  the counting function
$N(\sigma)$ as a sum of auxiliary counting functions
corresponding to different families of eigenvalues provided by
Theorem \ref{thm:eigenlattice}. Each of those counting functions will be then
investigated using lattice counting techniques.

\subsection{A hierarchy of counting functions}

Let $p\in\{1,2,\dotsc,d-1\}$.
Given $\tau = (\tau_1,\tau_2) \in \CT_{p}$ and
$\ell:S_d\rightarrow\{0,1\}$, define the counting function
$N^{\tau,\ell}:\R\rightarrow\N$ by
$$N^{\tau,\ell}(\sigma)=\#\{j\in\N\,:\,\sigma_j\in S_{\tau,\ell}
\mbox{ and }\sigma_j<\sigma\}.$$
Recall that the bipartition $\tau$ defines the location $\tau_1$ of the
trigonometric factors, and the location $\tau_2$ of the hyperbolic
factors, whereas the function $\ell$ 
distinguishes between $\sin$ and $\cos$ trigonometric factors, and
$\sinh$ and $\cosh$ hyperbolic factors.
We also introduce
\begin{equation}
\label{countingtau}
N^{\tau}(\sigma):=\sum_{\ell:S_d\rightarrow\{0,1\}}N^{\tau,\ell}(\sigma)
\quad\mbox{  and }\quad
N_p(\sigma):= \sum_{\tau \in \CT_p} N^{\tau}(\sigma).
\end{equation}
Since there is only a finite number of eigenfunctions with linear
factors, one has
\begin{equation*}
 N(\sigma) = \sum_{p=1}^{d-1} N_p(\sigma)+\bigo 1.
\end{equation*}
Set $q=d-p$ and let $\partial^q \Omega$ denote the union of $p$-dimensional facets of a cuboid $\Omega$. Our goal is to prove the following asymptotics for $N_p(\sigma)$.
\begin{prop}
\label{Npsigma}
For each $p=1,.\dots, d-1$, we have:

\begin{equation}
\label{eq:Npsigma}
 N_p(\sigma) = \frac{\sqrt{q^p}}{(2\pi)^p}\omega_p\Vol_{p}(\del^q(\Omega))\sigma^p + c_p \Vol_{p-1}(\del^{q+1}\Omega)\sigma^{p-1} + \bigo{\sigma^{\eta_p}},
\end{equation}
where $c_p$ are some explicitly computable constants and 
 \begin{equation*}
  \eta_p = \max\left(p-1-\frac{1}{p},\, p - 2 + \frac{2}{p+1}\right) 
  = \begin{cases}
      2/3 & \text{if } p=2, \\
      p-1-1/p & \text{otherwise}.
     \end{cases}
 \end{equation*}
\end{prop}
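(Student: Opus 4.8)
\textbf{Plan of the proof of Proposition~\ref{Npsigma}.}
The strategy is to reduce the counting of eigenvalues in $S_{\tau,\ell}$ to counting lattice points, and then to carry out the lattice count via a careful analysis of the dilated region that governs it. By Theorem~\ref{thm:eigenlattice}, for every bipartition $\tau\in\CT_p$ and every $\ell$, up to a $\bigo 1$ error the eigenvalues in $S_{\tau,\ell}$ are in bijection with the boxes $I_{2\bn+\bm}$, $\bn\in\N^p$, $\bm\in\{0,1\}^p$ (so that effectively with boxes $I_\bn$, $\bn\in\N^p$, each counted with multiplicity coming from the $2^q$ choices of $\ell|_{\tau_2}$), and the corresponding eigenvalue equals $|\balpha_\bn|/\sqrt q$ up to an error $\bigo{|\bn|^{-\infty}}$. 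Summing $N^{\tau,\ell}$ over the $2^d$ values of $\ell$ and over $\tau\in\CT_p$ (there are $\binom dp$ of them), the combinatorial prefactors combine into the volume $\Vol_p(\del^q\Omega)$ of the union of all $p$-dimensional facets. The upshot is that, up to $\bigo 1$,
\[
 N_p(\sigma)=2^q\sum_{\tau\in\CT_p}\#\Bigl\{\bn\in\N^p\ :\ \inf_{\bx\in I_{\bn}}\frac{|\bx|}{\sqrt q}<\sigma\Bigr\}+(\text{correction from boxes straddling the sphere}),
\]
so everything comes down to counting the lattice $\prod_{i\in\tau_1}\frac{\pi}{2a_i}\Z$ inside the dilated ball of radius $\sqrt q\,\sigma$, with the subtle point being the boundary boxes.

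\textbf{Key steps.} First I would make the reduction above precise: replace each exact eigenvalue $\sigma_\bn^{(k)}$ by its approximation $|\balpha_\bn|/\sqrt q$ and show that the super-polynomially small errors change the counting function by at most $\bigo 1$ — here one uses that only finitely many boxes carry an error larger than, say, $|\bn|^{-2}$, and for the remaining ones the displacement of the sphere is summable. This also absorbs the exceptional contributions from Theorem~\ref{thm:eigenlattice}(iv) and the set $S_0$. Second, I would set $R=\sqrt q\,\sigma$ and write the count of boxes $I_\bn$ (with $\bn\in\N_0^p$, then correct for $\bn\in\N_0^p\setminus\N^p$, which costs only $\bigo1$) whose \emph{infimum corner} lies in the ball $B_R$, i.e. $\#\{\bn\in\N_0^p: \sum_i (n_i\pi/2a_i)^2<R^2\}$ — equivalently a lattice point count for $\Lambda_{\tau_1}=\prod_{i\in\tau_1}\frac{\pi}{2a_i}\Z$ in the positive octant intersected with $B_R$. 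Third, and this is where the geometry of the region $E_\sigma$ of \eqref{eq:Esigma} enters: a box $I_\bn$ contributes a cluster of $2^q$ genuine eigenvalues $<\sigma$ only when the \emph{whole} cluster is below $\sigma$, which is not the same as the corner being in $B_R$; the boxes that straddle the sphere $\{|\bx|=R\}$ must be handled by replacing $B_R$ with a slightly deformed region $E_\sigma$, and the difference $\Vol_p(E_\sigma)-\Vol_p(B_R)$ produces a genuine $\sigma^{p-1}$ contribution — this is the source of the constant $c_p$ (and, in the application $p=d-1$, of the three-term splitting of $C_2$). Fourth, I would invoke the standard lattice-point count in a convex body: for $p\ge2$ the number of points of a fixed lattice in a dilate of a nice convex body of "radius" $R$ is $\mathrm{vol}\cdot R^p + \bigo{R^{p-1}}$, and more precisely, using the classical van der Corput / Hlawka type estimates (or the Herz bound for $p=2$, giving exponent $2/3$), the remainder is $\bigo{R^{p-1-1/p}}$ for $p\ge3$ and $\bigo{R^{2/3}}$ for $p=2$; for $p=1$ the count is trivially $R/(\pi/2a)+\bigo1$. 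Separating the positive-octant restriction introduces lower-dimensional lattice counts on the coordinate hyperplanes, which after resumming over $\tau$ yield exactly the second term $c_p\Vol_{p-1}(\del^{q+1}\Omega)\sigma^{p-1}$; the overcounting of lattice points on these hyperplanes is the content of Lemma~\ref{lem:overcounted}. Collecting the main term $\frac{\omega_p}{(2\pi)^p}R^p\cdot\#\{\tau\}$-weighting, rewriting $R^p=q^{p/2}\sigma^p$ and recognising $\sum_{\tau\in\CT_p}\prod_{i\in\tau_1}\frac{2a_i}{\pi}\cdot\prod_{j\in\tau_2}(2a_j)=\frac{2^p}{\pi^p}\Vol_p(\del^q\Omega)$ (with the $2^q$ multiplicity), gives the stated leading coefficient $\frac{\sqrt{q^p}}{(2\pi)^p}\omega_p\Vol_p(\del^q\Omega)$.

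\textbf{Main obstacle.} The routine parts are the combinatorial bookkeeping of prefactors and the off-the-shelf lattice-point remainder bounds. The genuinely delicate step is the third one: understanding precisely which boxes straddling the sphere contribute complete clusters of eigenvalues below $\sigma$, and showing that the net effect is a \emph{clean} $\sigma^{p-1}$ term with a computable constant rather than an uncontrolled $\bigo{\sigma^{p-1}}$ error. This requires (a) quantifying how far $E_\sigma$ deviates from the ball $B_R$ — the deviation is of order $1$ in the radial direction, localised near where the curves $C_T$ meet the sphere, so it contributes at the $\sigma^{p-1}$ scale — and (b) showing the deviation is regular enough (a graph over the sphere with a limiting profile) that its volume admits an asymptotic expansion $\Vol_p(E_\sigma)=\omega_pR^p+(\text{const})R^{p-1}+o(R^{p-1})$, with the constant expressed through the angular integral $G_{p,1}$; combining this with the lattice remainder then forces $\eta_p$ to be the larger of the two exponents $p-1-1/p$ and $p-2+2/p$ coming respectively from the lattice remainder inside $E_\sigma$ and from the error in approximating $\Vol_p(E_\sigma)$ by its two-term expansion. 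I would prove (a)–(b) by parametrising each curve $C_T$ by the common compatibility value $\sigma$ and Taylor-expanding $T_{a_i,\ell(i)}$ near its zeros, which shows $\alpha_i=\sigma+c_i/\sigma+O(\sigma^{-3})$ along the curve, making the discrepancy between "corner in $B_R$" and "cluster below $\sigma$" explicit and integrable.
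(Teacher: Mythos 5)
Your high-level strategy (reduce to a lattice-point count for a $\sigma$-dependent convex region $E_\sigma$, then extract two terms) is the same as the paper's, and the main-term bookkeeping is essentially right, but the way you account for the two competing error exponents is incorrect, and this points to a genuine missing step.

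You attribute the exponent $p-1-1/p$ to the lattice-point remainder in a convex body, and the other exponent to ``the error in approximating $\Vol_p(E_\sigma)$ by its two-term expansion.'' Both attributions are wrong. The lattice-point remainder for a smooth strictly convex body in $\R^p$ (Hlawka; mollified Poisson summation as in \cite{lagaceparnovski}) is $O(R^{p-2+\frac{2}{p+1}})$ for all $p\ge 2$ — not $O(R^{p-1-1/p})$ as you claim for $p\ge 3$, which is strictly worse (e.g.\ $5/3$ vs $3/2$ for $p=3$). And the error in the two-term volume expansion \eqref{eq:volumeesigma} is $O(\sigma^{-2})$, hence contributes only $O(\sigma^{p-2})$, which is dominated. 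So if you corrected the lattice bound to the right Hlawka exponent, neither of your two sources would produce $p-1-1/p$, and for $p\ge 3$ your final bound would silently become too strong: you would be missing the dominant error altogether. (Separately, you wrote $p-2+2/p$ rather than $p-2+\frac{2}{p+1}$; for $p=2$ that gives $1$, not $2/3$, contradicting the statement.)

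The exponent $p-1-1/p$ actually comes from a step you do not carry out: passing from the quasi-eigenvalue $|\balpha_\bn|/\sqrt q$, where $\balpha_\bn$ solves the \emph{transcendental} compatibility system \eqref{eq:transcendental}, to an explicit, analytically defined approximate eigenvalue $\tilde\sigma_\bn$ (equations \eqref{eq:analytic}--\eqref{eq:approxsigma}), in which the ratios $\alpha_j/\alpha_i$ are replaced by $a_i n_j/(a_j n_i)$. Lemma~\ref{lem:alphatilde} shows this introduces an error $O(|\bn|^{-1})$, and Lemma~\ref{lem:closeev} converts a $O(\lambda^{-s})$ perturbation of the spectrum into a $O(\lambda^{p-1-s})$ perturbation of the counting function, with $s=1/p$ giving $O(\sigma^{p-1-1/p})$. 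Your remark about ``parametrising $C_T$ by $\sigma$ and Taylor-expanding $T_{a_i,\ell(i)}$'' is gesturing at the right issue — the $\balpha_\bn$ cannot be written in closed form — but without an explicit replacement like $\tilde\alpha_i$ one cannot even write down $E_\sigma$ as a concrete region, and without tracking the size of the resulting shift one cannot recover the exponent. That explicit approximation, and the counting-function lemma that quantifies its cost, is the core idea your proposal lacks. A secondary point: because $E_\sigma$ is a $\sigma$-dependent family rather than a fixed dilated body, one cannot simply quote a black-box lattice theorem; the paper proves uniform bounds on the principal curvatures of $\del E_\sigma$ and their derivatives (Lemma~\ref{lem:curvature}) precisely so that the Fourier-transform estimate \eqref{equation:fourier} and the mollified Poisson summation apply uniformly in $\sigma$.
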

We prove Proposition \ref{Npsigma} in subsection \ref{Npsigma:proof}.

\subsection{Quasi-eigenvalues}

In this section, we observe that the clustering of eigenvalues in Theorem \ref{thm:eigenlattice} allows us to simplify the eigenvalue counting problem. Essentially, we will count every cluster as one eigenvalue with a weight equal to the number of eigenvalues in the cluster.

\begin{defi}
  Given $p\in S_d$, $q=d-p$, $\tau\in\CT_p$, $\ell:S_d\rightarrow\{0,1\}$ and
  $\bn\in\N^p$, the number $\frac{|\balpha_\bn|}{\sqrt{q}}$ defined in \eqref{eq:sigmabnk}
  is called a \emph{quasi-eigenvalue of multiplicity $2^{q}$}.
\end{defi}
It is clear from Theorem \ref{thm:eigenlattice} that
\begin{gather}\label{def:quasicounting}
  N(\sigma)=\sum_{p=1}^{d-1}2^{q}\#\left\{\bn\in\N^p\,:\,\frac{|\balpha_\bn|}{\sqrt{q} }<\sigma\right\}+\bigo{1}.
\end{gather}
The factor $2^{q}$ accounts for the clustering of eigenvalues around
the corresponding quasi-eigenvalue. Note that the $\bigo{1}$ error can be absorbed in the error term in \eqref{counting:higher}.
Therefore, in view of  \eqref{def:quasicounting}, for our purposes there is no need to distinguish between 
counting eigenvalues and  quasi-eigenvalues.

\subsection{Eigenfunctions with a single trigonometric factor}
Consider first the case $p=1$.
The choice of $\sin$
or $\cos$ for the trigonometric factor and the choice of
the coordinate corresponding to the trigonometric factor yields $2d$ families of eigenfunctions,  each having $2^{d-1}$ possibilities for the choice of the hyperbolic factor. 
As follows from Theorem \ref{thm:eigenlattice}, each of the $2d$ families contributes  a cluster of $2^{d-1}$ eigenvalues  which correspond to the same quasi-eigenvalue. Therefore, as
was mentioned earlier, this cluster can be counted   for our purposes as a single quasi-eigenvalue of multiplicity $2^{d-1}$.
The compatibility equations
  \begin{equation}
\label{compeq}
H_{a_i,\ell(i)}(\beta_i)=H_{a_j,\ell(j)}(\beta_j)\quad\forall
  i,j\in\tau_2
\end{equation}
 define a
  connected curve in $\R^{d-1}$ which goes to infinity
  along the diagonal while its value increases to $+\infty$.
Equating \eqref{compeq}  to $T_{a_k, \ell(k)}$, $k \in \tau_1$ amounts to
solving the following equations:
\begin{equation*}
 \alpha_k \cot(a_k \alpha_k) = \frac{\alpha_k}{\sqrt{d-1}} +
 \bigo{\alpha_k^{- \infty}}
 \qquad\mbox{ if }\ell(k)=0,
\end{equation*}
and
\begin{equation*}
- \alpha_k \tan(a_k \alpha_k) = \frac{\alpha_k}{\sqrt{d-1}}
+\bigo{\alpha_k^{- \infty}}
 \qquad\mbox{ if }\ell(k)=1.
\end{equation*}
This yields eigenvalues of the form 
\begin{equation*}
 \sigma= \begin{cases}
   \frac{\pi j}{a_j\sqrt{d-1}} + \frac{1}{a_j\sqrt{d-1}} \arccot((d-1)^{-1/2}) + \bigo{j^{-\infty}} & \text{if } \ell(k) = 0,\\
   \frac{\pi j}{a_j\sqrt{d-1}} + \frac{1}{a_j\sqrt{d-1}}\arctan((d-1)^{-1/2}) + \bigo{j^{-\infty}} &\text{if } \ell(k) = 1,
 \end{cases}
\end{equation*}
each with quasi-multiplicity $2^{d-1}$.
Given that  $\arccot$ and $\arctan$ are bounded functions, and since 
\begin{equation*}
\Vol_{1}(\del^{d-1}\Omega) = 2^d \sum_{j=1}^d a_j,
\end{equation*}
we have that
\begin{equation*}
 N_1(\sigma) = \frac{\omega_1 \sqrt{d-1}}{2\pi} \Vol_1(\del^{d-1}\Omega)\, \sigma + \bigo{1}.
\end{equation*}
This concludes the proof of Theorem  \ref{thm:main} for $d = 2$, since $p = 1$ is the only possibility in this case. Observe that for $d = 2$, this is indeed the expected first term of Weyl's law \eqref{C1Weyl}.

\subsection{Eigenfunctions with many trigonometric factors}
\label{many}
In this subsection, we count the number of eigenvalues associated with
eigenfunctions with more than one trigonometric factor. The idea is to
write the eigenvalues as the norms of points $\balpha \in \R^p$ that are
close to some lattice points. The main difficulty
is that the compatibility equations are transcendental, making it
impossible to explicitly find $\balpha$. We will therefore approximate
the eigenvalues in a controlled way, and we will show that this
approximation results in a small enough error that could be absorbed
in the remainder in the two-term asymptotics for the eigenvalue
counting function. Finally, we will use  the lattice point counting
techniques going back to  \cite{hlawka, randol}, and more recently
used in \cite{lagaceparnovski}.

\subsubsection{Approximate eigenvalues}
Suppose that  $d\geq 3$ and
  $p\in\{2,\dotsc,d-\krn1\}$. Let $\tau\in\CT_p$ and
$\ell:S_d\rightarrow\{0,1\}$ be given.

Given $\bn\in\N^p$, it follows from Theorem
\ref{thm:eigenlattice}  and the compatibility equations
\eqref{equation:compatibility},  that the corresponding solution
$\balpha=\balpha_\bn\in I_\bn$ satisfies the following for each $i,j\in\tau_1$
\begin{equation*}
 \alpha_i \cot\left(\alpha_i a_i + \frac{\ell(i) \pi}{2}\right) = \alpha_j \cot\left(\alpha_j a_j + \frac{\ell(j) \pi}{2}\right) =
 \frac{|\balpha_\bn|}{\sqrt q} + \bigo{|\bn|^{-\infty}}.
\end{equation*}
Hence,  for each $i\in\tau_1$, we have, choosing the principal branch of $\arccot$, a family of solutions indexed by $\bn \in \N^p$
\begin{equation*}
 \alpha_i a_i = \left( n_i+\frac{\ell(i)}{2}\right)\pi  + \arccot\left(\frac{1}{\sqrt q}\left[1 + \sum_{j \ne i\in\tau_1}
     \left(\frac{\alpha_j}{\alpha_i}\right)^2\right]^{1/2}\right) + \bigo{|\bn|^{-\infty}}. 
\end{equation*}
Since $\alpha_i= \frac{\left(n_i + \frac{\ell(i)}{2}\right) \pi}{a_i}+O(1)$, we can rewrite the previous equation as follows 
\begin{equation} \label{eq:transcendental}
 \begin{aligned}
 \alpha_i &= \frac{\left(n_i+\frac{\ell(i)}{2}\right)\pi}{a_i} \\ &\quad+ \frac{1}{a_i} \arccot\left(\frac{1}{\sqrt q}\left[1 + \sum_{j \ne i} \left(\frac{\frac{\left(n_j + \frac{\ell(j)}{2}\right) \pi}{a_j} + t_{\alpha_j}(\bn)}{\frac{\left(n_i + \frac{\ell(i)}{2}\right)\pi}{a_i} + t_{\alpha_i}(\bn)}\right)^2\right]^{1/2}\right)+\bigo{|\bn|^{-\infty}},
\end{aligned}
 \end{equation}
where the functions $t_{\alpha_j}$ are bounded. Since $\ell(i)$ ranges over $\set{0,1}$, the solution set to the previous equation is the same as the one to
\begin{equation}
  \alpha_i = \frac{n_i\pi}{2a_i} + \frac{1}{a_i} \arccot\left(\frac{1}{\sqrt q}\left[1 + \sum_{j \ne i} \left(\frac{\frac{n_j \pi}{2a_j} + t_{\alpha_j}(\bn)}{\frac{n_i\pi}{2a_i} + t_{\alpha_i}(\bn)}\right)^2\right]^{1/2}\right)+\bigo{|\bn|^{-\infty}}.
\end{equation}
\begin{lem} \label{lem:alphatilde}
Define $\tilde \alpha_i$ as 
\begin{equation} \label{eq:analytic}
 \tilde \alpha_i = \frac{ n_i\pi}{2a_i} + \frac{1}{a_i}  \arccot\left(\frac{1}{\sqrt q}\left[1 + \sum_{j \ne i} \left(\frac{a_i n_j}{a_j n_i}\right)^2\right]^{1/2}\right).
\end{equation}
Then,
\begin{equation} \label{eq:approximatealpha}
 \tilde \alpha_i  = \alpha_i + \bigo{|\bn|^{-1}}
\end{equation}
\end{lem}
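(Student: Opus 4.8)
The plan is to reduce the statement to a one-variable estimate for $\arccot$ and then to exploit the flatness of $\arccot$ at large arguments. Set $q=d-p=|\tau_2|$, and for $j\in\tau_1\setminus\{i\}$ put $r_j=\alpha_j/\alpha_i$ and $\tilde r_j=\frac{a_i n_j}{a_j n_i}$, so that the arguments of $\arccot$ occurring in $\alpha_i$ and in $\tilde\alpha_i$ are $X=q^{-1/2}(1+S)^{1/2}$ and $\tilde X=q^{-1/2}(1+\tilde S)^{1/2}$, where $S=\sum_{j\ne i}r_j^2$ and $\tilde S=\sum_{j\ne i}\tilde r_j^2$. The transcendental equation satisfied by $\alpha_i$ (that is, \eqref{eq:transcendental} after the reindexing) together with \eqref{eq:analytic} gives $\alpha_i-\tilde\alpha_i=\frac1{a_i}(\arccot X-\arccot\tilde X)+\bigo{|\bn|^{-\infty}}$, so it suffices to show $\arccot X-\arccot\tilde X=\bigo{|\bn|^{-1}}$. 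Since $\arccot y=\arctan(1/y)$, $\arctan$ is $1$-Lipschitz, and $X^{-2}=q/(1+S)$, we have
\[
 \bigl|\arccot X-\arccot\tilde X\bigr|\le\Bigl|\frac1X-\frac1{\tilde X}\Bigr|
 =\sqrt q\,\frac{|S-\tilde S|}{(1+S)\sqrt{1+\tilde S}+(1+\tilde S)\sqrt{1+S}}
 \le\sqrt q\,\frac{|S-\tilde S|}{(1+\tilde S)\sqrt{1+S}}.
\]

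I would then establish two pointwise bounds, both consequences of the inclusion $\balpha_\bn\in I_\bn$, i.e.\ of $\frac{n_j\pi}{2a_j}<\alpha_j\le\frac{(n_j+1)\pi}{2a_j}$ for every $j\in\tau_1$ (in particular each $t_{\alpha_j}(\bn)=\alpha_j-\frac{n_j\pi}{2a_j}$ is positive and bounded). Writing $r_j-\tilde r_j=\frac{t_{\alpha_j}(\bn)\cdot\frac{n_i\pi}{2a_i}-\frac{n_j\pi}{2a_j}\cdot t_{\alpha_i}(\bn)}{\alpha_i\cdot\frac{n_i\pi}{2a_i}}$ and using $\alpha_i>\frac{n_i\pi}{2a_i}$ yields $|r_j-\tilde r_j|\lesssim(1+\tilde r_j)/n_i$; since this also gives $r_j\lesssim 1+\tilde r_j$, we get $|r_j^2-\tilde r_j^2|=|r_j-\tilde r_j|(r_j+\tilde r_j)\lesssim(1+\tilde r_j)^2/n_i$, and summing over the $p-1$ indices $j$ produces the first bound $|S-\tilde S|\lesssim(1+\tilde S)/n_i$. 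For the second bound, using $\alpha_j>\frac{n_j\pi}{2a_j}$ and $\alpha_i\le\frac{(n_i+1)\pi}{2a_i}\le\frac{n_i\pi}{a_i}$ one finds $S=\alpha_i^{-2}\sum_{j\ne i}\alpha_j^2\gtrsim n_i^{-2}\sum_{j\ne i}n_j^2$, whence $1+S\gtrsim n_i^{-2}\bigl(n_i^2+\sum_{j\ne i}n_j^2\bigr)=|\bn|^2/n_i^2$ and $\sqrt{1+S}\gtrsim|\bn|/n_i$.

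Substituting the first bound into the displayed inequality cancels the factor $1+\tilde S$ and leaves $|\arccot X-\arccot\tilde X|\lesssim 1/(n_i\sqrt{1+S})$; the second bound then gives $|\arccot X-\arccot\tilde X|\lesssim\frac1{n_i}\cdot\frac{n_i}{|\bn|}=\frac1{|\bn|}$, and together with the $\bigo{|\bn|^{-\infty}}$ term this is \eqref{eq:approximatealpha}. The only real obstacle is that $r_j$ and $\tilde r_j$ themselves need not be small: when $n_i$ is small while some $n_j$ is large they are comparable large quantities whose difference is of the same order, so a crude Lipschitz estimate for $\arccot$ applied directly to $X$ and $\tilde X$ is far too lossy. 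Passing to $\arctan(1/X)$ and retaining the exact denominator $(1+S)\sqrt{1+\tilde S}+(1+\tilde S)\sqrt{1+S}$ is precisely what lets the growth $|S-\tilde S|\lesssim 1+\tilde S$ be absorbed, leaving a remainder governed by the genuinely large quantity $\sqrt{1+S}\gtrsim|\bn|/n_i$.
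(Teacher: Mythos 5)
Your proof is correct, but it takes a genuinely different route from the paper's. The paper isolates a general auxiliary statement (Lemma~\ref{lem:arccot} in Appendix~\ref{appendix:arccot}): for the arccot-type function $f_i$ and any bounded perturbation $\psi$, one has $|f_i(\bx+\psi(\bx))-f_i(\bx)|=\bigo{|\bx|^{-1}}$. That lemma is proved by passing to spherical coordinates $(r,\btheta)$, observing that $f_i$ depends only on $\btheta$, that a bounded perturbation of $\bx$ changes $\btheta$ by $\bigo{r^{-1}}$, and that the angular gradient $\del_{\theta_j}f_i$ is uniformly bounded because the singularities of $\cot\theta_j\prod\csc\theta_k$ in the numerator are dominated by $1+c^2\prod\csc^2\theta_k$ in the denominator. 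Lemma~\ref{lem:alphatilde} then follows by setting $\psi_i=t_{\alpha_i}$ and noting $|\bx|\asymp|\bn|$. You instead argue algebraically: the identity $\arccot y=\arctan(1/y)$ converts the problem to estimating $|1/X-1/\tilde X|$ via the global $1$-Lipschitz bound for $\arctan$, and the exact denominator $(1+S)\sqrt{1+\tilde S}+(1+\tilde S)\sqrt{1+S}$ is what absorbs the growth of $|S-\tilde S|$ and produces the right scaling $\sqrt{1+S}\gtrsim|\bn|/n_i$. Both arguments hinge on the same phenomenon — a blunt global Lipschitz estimate for $\arccot$ fails when $n_i\ll|\bn|$, precisely the regime where $\btheta$ approaches a pole or equivalently $X,\tilde X$ get large — but they handle it differently: the paper geometrically via the angular gradient, you via cancellation in the explicit reciprocal. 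Your version is self-contained and more elementary (no change of coordinates, no appendix lemma), at the price of being specific to the present $\arccot$; the paper's version is more modular and the appendix lemma is reusable. Your two pointwise bounds $|S-\tilde S|\lesssim(1+\tilde S)/n_i$ and $\sqrt{1+S}\gtrsim|\bn|/n_i$, as well as the intermediate step $r_j\lesssim 1+\tilde r_j$ which you need before squaring, all check out.
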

\begin{proof}
In Lemma \ref{lem:arccot} in the Appendix, take $x_i = \frac{n_i \pi}{a_i}$ and $\psi_i = t_{\alpha_i}$. Then, one readily sees that 
\[
 |\bx|\asymp |\bn|,
\]
where $f \asymp g$ means that $f = \bigo g$ and $g = \bigo f$. The lemma then follows. 
\end{proof}
Note that the right hand side of equation \eqref{eq:analytic} does not depend on $\alpha_i$ anymore, which makes it easier to analyse.

We now have eigenvalues indexed by $\bn \in \N^p$ given by
\begin{equation} \label{eq:approxev}
 \sigma_\bn = \sqrt{\frac{1}{ q} \sum_{i \in \tau_1} \tilde \alpha_i^2} + \bigo{|\bn|^{-1}}.
\end{equation}

\begin{defi}
The numbers
 \begin{equation} \label{eq:approxsigma}
  \tilde \sigma_\bn = \sqrt{\frac{1}{ q} \sum_{i \in \tau_1} \tilde \alpha_i^2}
 \end{equation}
are called the {\it approximate eigenvalues}. 
\end{defi}

\begin{rem}
 Up until now, eigenvalues, quasi-eigenvalues and approximate eigenvalues were indexed by $\bn \in \N^p$. In the following two theorems it is convenient to use $n \in \N$ to index them in an ascending order.
\end{rem}

The following lemma allows us to estimate the error induced by counting approximate eigenvalues  instead of eigenvalues. 
\begin{lem} \label{lem:closeev}
  Let $(a_n), (b_n)$ be two sequences of positive numbers
  which tend to infinity. Suppose there exists  a number $s > -1$ such that 
  $a_n = b_n + \bigo{b_n^{-s}}$. Let
 \begin{equation*}
  N_a(\lambda) = \#\set{n : a_n < \lambda} 
\qquad\mbox{ and }\qquad
 N_b(\lambda) = \#\set{n : b_n < \lambda}.
\end{equation*}
Suppose that there exists  a number $K$ such that
\begin{equation*}
N_a(\lambda) = \sum_{k=0}^K c_k \lambda^{p - k} + \bigo{\lambda^{r}},
\end{equation*}
with $r < p-K$. Then,
\begin{equation} \label{eq:closecountingfunction}
 N_b(\lambda) = \sum_{k=0}^K c_k \lambda^{p - k} + \bigo{\lambda^{r'}}
\end{equation}
where $r' = \max(r,p-1-s)$.
\end{lem}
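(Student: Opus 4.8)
The plan is to compare the two counting functions directly by controlling, for each $\lambda$, how the condition $b_n<\lambda$ differs from $a_n<\lambda$. Write $a_n = b_n + R_n$ with $|R_n| \le C b_n^{-s}$. The only indices $n$ at which the two conditions $\{a_n<\lambda\}$ and $\{b_n<\lambda\}$ can disagree are those for which $b_n$ lies within distance $|R_n|$ of $\lambda$, and since $b_n \to \infty$ this forces $b_n$ to be comparable to $\lambda$, so $|R_n| = \bigo{\lambda^{-s}}$. Hence $N_b(\lambda)$ and $N_a(\lambda)$ differ at most by the number of indices $n$ with $b_n \in (\lambda - C'\lambda^{-s}, \lambda + C'\lambda^{-s}]$ for a suitable constant $C'$ (with the interval possibly being empty when $s > 0$ and $\lambda$ large, but we keep it for uniformity). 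Therefore
\begin{equation*}
 |N_b(\lambda) - N_a(\lambda)| \le \#\set{n : |b_n - \lambda| \le C'\lambda^{-s}}.
\end{equation*}

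Next I would estimate the size of that exceptional set. Since $a_n = b_n + \bigo{b_n^{-s}}$, the condition $|b_n - \lambda| \le C'\lambda^{-s}$ implies $|a_n - \lambda| \le C''\lambda^{-s}$ for some larger constant $C''$ (using again $b_n \asymp \lambda$ on this range), so it suffices to bound $\#\set{n : \lambda - C''\lambda^{-s} < a_n \le \lambda + C''\lambda^{-s}}$, which equals $N_a(\lambda + C''\lambda^{-s}) - N_a(\lambda - C''\lambda^{-s})$. Now I apply the hypothesised asymptotic expansion $N_a(\mu) = \sum_{k=0}^K c_k \mu^{p-k} + \bigo{\mu^r}$ at the two endpoints $\mu = \lambda \pm C''\lambda^{-s}$. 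Expanding each term $(\lambda \pm C''\lambda^{-s})^{p-k}$ by the mean value theorem, the difference of the two polynomial parts is $\bigo{\lambda^{p-1}\cdot \lambda^{-s}} = \bigo{\lambda^{p-1-s}}$ (the dominant contribution coming from $k=0$), and the two error terms contribute $\bigo{\lambda^r}$. This gives
\begin{equation*}
 |N_b(\lambda) - N_a(\lambda)| = \bigo{\lambda^{p-1-s}} + \bigo{\lambda^{r}} = \bigo{\lambda^{r'}}, \qquad r' = \max(r,\, p-1-s).
\end{equation*}
Combining this with the assumed expansion for $N_a$ yields \eqref{eq:closecountingfunction}, and the condition $r < p-K$ together with $s > -1$ guarantees $r' < p - K$ as well, so the stated expansion is genuinely an asymptotic one.

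The only mildly delicate point is making the heuristic ``$b_n$ within $|R_n|$ of $\lambda$ forces $b_n \asymp \lambda$'' rigorous and uniform in $\lambda$: for small $\lambda$ there is nothing to prove (finitely many terms, absorbed in the $\bigo{\cdot}$), and for $\lambda$ large one uses that $b_n \to \infty$ together with $a_n = b_n(1 + o(1))$ to deduce that $|b_n - \lambda| \le C'\lambda^{-s} \le 1$ implies, say, $\tfrac12 \lambda \le b_n \le 2\lambda$, after which all the implied constants above can be taken absolute. I do not expect a genuine obstacle here — the main care is simply in bookkeeping the chain of constants $C \rightsquigarrow C' \rightsquigarrow C''$ and checking that the mean-value-theorem expansion of $(\lambda \pm C''\lambda^{-s})^{p-k}$ really does produce only a $\lambda^{p-1-s}$ term and not something larger, which it does since $p - k \le p$ and the increment is $\bigo{\lambda^{-s}}$.
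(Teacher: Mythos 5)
Your argument is correct and follows essentially the same route as the paper's: bound $N_b(\lambda)$ by evaluating $N_a$ at the shifted arguments $\lambda \pm c\lambda^{-s}$ (which rests on monotonicity of $N_a$ together with $a_n = b_n + \bigo{b_n^{-s}}$ and $b_n \asymp \lambda$ near the threshold), then expand the shifted powers $(\lambda \pm c\lambda^{-s})^{p-k}$ to pick up the $\bigo{\lambda^{p-1-s}}$ correction. The paper phrases this directly as the sandwich
\begin{equation*}
 N_a\!\left(\lambda - c\lambda^{-s}\right) \;\le\; N_b(\lambda) \;\le\; N_a\!\left(\lambda + c\lambda^{-s}\right)
\end{equation*}
(up to finitely many exceptional indices), whereas you pass through the equivalent bound $\left|N_b(\lambda) - N_a(\lambda)\right| \le N_a(\lambda + C''\lambda^{-s}) - N_a(\lambda - C''\lambda^{-s})$; the subsequent computation is the same.

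One inaccuracy in your closing aside: $r < p - K$ together with $s > -1$ does \emph{not} guarantee $r' < p - K$. Since $r' \ge p - 1 - s$, one has $r' \ge p - K$ whenever $s \le K - 1$, which is compatible with $s > -1$ as soon as $K \ge 1$. The paper explicitly notes this in the remark following the lemma: if $r' \ge p - K$, the lower-order terms of the expansion are simply absorbed into the error. The lemma as stated is still correct, but the expansion is not necessarily a genuine $K$-term asymptotic. This aside does not affect the validity of your proof of the statement itself.
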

\begin{rem}
Note that if $r' \ge p - K$, some of the terms in the sum in \eqref{eq:closecountingfunction} might be absorbed in the error term. 
\end{rem}

\begin{proof}
Indeed, the assumption on the sequences $a_n$ and $b_n$ implies that there exists $c > 0$ such that
 \begin{equation*}
  N_a\left(\lambda + \frac{c}{\lambda^s}\right) \le N_b(\lambda) \le N_a\left(\lambda - \frac{c}{\lambda^s}\right).
 \end{equation*}
 A direct computation of $N_a(\lambda \pm c \lambda^{-s})$ completes the proof of the lemma.
\end{proof}

 Recall now the definition of  $N^\tau(\sigma)$ given by \eqref{countingtau}. We will write $\tilde N^\tau$ for the counting function of the corresponding approximate eigenvalues.  \begin{lem}
We have:
\begin{equation*}
 \left|\tilde N^{\tau}(\sigma) - N^{\tau}(\sigma)\right| = \bigo{\sigma^{p - 1 - 1/p}}.
\end{equation*}
 \end{lem}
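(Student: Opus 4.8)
The plan is to apply Lemma~\ref{lem:closeev} with the roles of the two sequences played by the eigenvalues of $S_\tau$ (more precisely, the quasi-eigenvalues, which differ from genuine eigenvalues by $\bigo{|\bn|^{-\infty}}$ and hence contribute nothing to the error) and the approximate eigenvalues $\tilde\sigma_\bn$. By \eqref{eq:approxev} we have, after re-indexing both families in ascending order by a single index $n\in\N$, that $\sigma_n = \tilde\sigma_n + \bigo{|\bn|^{-1}}$. The key point is to convert this bound $\bigo{|\bn|^{-1}}$ in terms of the multi-index $\bn$ into a bound of the form $\bigo{b_n^{-s}}$ for a scalar $s>-1$, where $b_n$ runs over the approximate eigenvalues in increasing order. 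Since each $\tilde\alpha_i$ is comparable to $|\bn|$ when $\bn\in\N^p$ (all coordinates $n_i\ge 1$), and since $\tilde\sigma_\bn\asymp|\tilde\balpha_\bn|\asymp|\bn|$ by \eqref{eq:approxsigma}, we get $\sigma_n = \tilde\sigma_n + \bigo{\tilde\sigma_n^{-1}}$, i.e.\ we may take $s=1>-1$.

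The second ingredient is an asymptotic formula for one of the two counting functions to serve as the input ``$N_a$'' in Lemma~\ref{lem:closeev}. Here I would use the asymptotics for $\tilde N^\tau(\sigma)$, the counting function of approximate eigenvalues, which by \eqref{eq:analytic} is a genuine lattice-point counting problem for the analytic region associated to $\tilde\sigma_\bn<\sigma$. Although the full two-term asymptotics for $\tilde N^\tau$ (with the explicit constants) is the content of the later lattice-counting analysis, for the present lemma we only need a one-sided crude bound: it suffices to know that $\tilde N^\tau(\sigma) = c_0\sigma^p + \bigo{\sigma^{p-1}}$, or even just $\tilde N^\tau(\sigma+h)-\tilde N^\tau(\sigma) = \bigo{(\sigma+h)^{p-1}h + \sigma^{p-1}}$ for small $h$, which follows from the fact that the region is a perturbation of a Euclidean ball of radius $\asymp\sigma$ with boundary of $(p-1)$-dimensional measure $\asymp\sigma^{p-1}$ (the number of lattice points in a shell of width $h$ around a sphere of radius $\sigma$ is $\bigo{\sigma^{p-1}h+\sigma^{p-1}}$ by a standard slicing argument). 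Applying this with $K=0$, $r=p-1$, $s=1$ in Lemma~\ref{lem:closeev} yields $r'=\max(p-1,\,p-2)=p-1$, giving $|\tilde N^\tau(\sigma)-N^\tau(\sigma)|=\bigo{\sigma^{p-1}}$.

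There is a gap between the $\bigo{\sigma^{p-1}}$ just obtained and the claimed $\bigo{\sigma^{p-1-1/p}}$, and closing it is the main obstacle. The improvement comes from the fact that the monotone comparison in the proof of Lemma~\ref{lem:closeev} only ``loses'' the lattice points in a shell of width $c\sigma^{-1}$ around the boundary of the region, and for the region in question --- a $C^\infty$, strictly convex perturbation of a ball --- the number of lattice points lying within distance $\delta$ of the boundary sphere of radius $\asymp\sigma$ is $\bigo{\sigma^{p-1}\delta + \sigma^{p-1-2/(p+1)}}$ by the classical estimate on lattice points near a smooth convex surface (the exponent $p-1-2/(p+1)$ being exactly the Hlawka--Randol bound used elsewhere in this section). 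With $\delta = c\sigma^{-1}$ this gives $\bigo{\sigma^{p-2} + \sigma^{p-1-2/(p+1)}}$, and since $p-1-2/(p+1)\le p-1-1/p$ for all $p\ge 1$ and likewise $p-2\le p-1-1/p$, we obtain the desired bound $\bigo{\sigma^{p-1-1/p}}$. Thus the argument is: (1) pass from eigenvalues to quasi-eigenvalues to approximate eigenvalues, controlling each step by \eqref{eq:sigmabnk} and \eqref{eq:approxev}; (2) observe $s=1$; (3) instead of invoking Lemma~\ref{lem:closeev} as a black box, re-run its monotonicity sandwich and bound the discrepancy by the number of (approximate-eigenvalue) lattice points in a thin boundary shell, estimated via the lattice-points-near-a-smooth-surface bound. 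The only subtlety to watch is uniformity: the implied constants in $\sigma_n=\tilde\sigma_n+\bigo{\sigma_n^{-1}}$ must be independent of the particular $\bn$, which is guaranteed because the bounded functions $t_{\alpha_j}(\bn)$ and the $\bigo{|\bn|^{-\infty}}$ terms in \eqref{eq:transcendental}--\eqref{eq:approxev} are bounded uniformly over $\bn\in\N^p$.
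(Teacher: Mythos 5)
Your approach is genuinely different from the paper's and, in my reading, more transparent. The paper proceeds by two applications of Lemma~\ref{lem:closeev}: first comparing to lattice norms with $s=0$ to get the one-term asymptotics for $N^{\tau}$, then comparing $\sigma_n$ to $\tilde\sigma_n$ with $s=1/p$. But applied literally with $K=0$, $r=p-1$, and $s=1/p$, Lemma~\ref{lem:closeev} gives $r'=\max(p-1,\,p-1-1/p)=p-1$, not the exponent $p-1-1/p$ claimed, so the paper's derivation as written is opaque. You correctly diagnose that the naive sandwich only gives $\bigo{\sigma^{p-1}}$ and that the improvement must come from a quantitative estimate on lattice points in a thin shell around the boundary, which is exactly the right idea. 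You are also right that the relevant comparison exponent is $s=1$, not $s=1/p$.

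The flaw is in the Hlawka--Randol exponent you quote. The error in counting lattice points in a smoothly bounded strictly convex region of linear size $\sigma$ is $\bigo{\sigma^{p-2+2/(p+1)}}$ (as the paper itself uses in \eqref{eq:psferror}), not $\bigo{\sigma^{p-1-2/(p+1)}}$ as you write; these agree only at $p=3$. With the correct exponent, the number of lattice points in a shell of width $\delta=c\sigma^{-1}$ is $\bigo{\sigma^{p-2}+\sigma^{p-2+2/(p+1)}}=\bigo{\sigma^{p-2+2/(p+1)}}$. Now $p-2+2/(p+1)\le p-1-1/p$ if and only if $p^2-2p-1\ge 0$, i.e.\ $p\ge 3$: for $p=2$ one only gets $\bigo{\sigma^{2/3}}$, which is larger than the claimed $\bigo{\sigma^{1/2}}$. (Your incorrect exponent $p-1-2/(p+1)$ happens to satisfy the needed inequality for all $p\ge 1$, which is why the slip went unnoticed.) In the end this does not damage Proposition~\ref{Npsigma}, because the final remainder exponent $\eta_p=\max(p-1-1/p,\,p-2+2/(p+1))$ already absorbs $\bigo{\sigma^{p-2+2/(p+1)}}$; but as stated your argument does not recover the lemma's bound at $p=2$, and the exponent in step (3) needs to be fixed.
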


 \begin{proof} Both the eigenvalues and the approximate eigenvalues are, up to a bounded error, the norms of the points of the lattice $\Gamma = \bigoplus_{i=1}^p \frac{\pi}{2 a_i\sqrt q}\N$, repeated $2^q$ times. 
 Denote  by $l_n := \set{|\bgamma| : \bgamma \in \Gamma}_n$ the sequence of norms of the points of the  lattice $\Gamma$ arranged in ascending order. It is well known that there is a constant $C$ such that
 \begin{equation*}
  N_l(\sigma) = C \sigma^p + \bigo{\sigma^{p-1}},
 \end{equation*}
 where $C$ depends on $\Gamma$ and $N_l$ denotes the counting function of the sequence $l_n$ as in Lemma \ref{lem:closeev}. 
Applying Lemma \ref{lem:closeev} with $s = 0$ yields
\begin{equation*}
 N^\tau(\sigma) = 2^q C \sigma^p + \bigo{\sigma^{p-1}}.
\end{equation*}
Reversing this expression tells us that
\begin{equation} \label{eq:asympsigma}
 \sigma_n = \left(\frac{n}{2^q C} \right)^{1/p}+\smallo{n^{1/p}}.
\end{equation}

From equations \eqref{eq:approxsigma} and \eqref{eq:asympsigma} we have that
\begin{equation*}
 \tilde\sigma_n = \sigma_n + \bigo{n^{-1/p}}.
\end{equation*}
Therefore, applying once again Lemma \ref{lem:closeev}, but this time with $s = 1/p$, yields
\begin{equation} \label{eq:Ntilde}
 N^{\tau}(\sigma) = \tilde N^{\tau}(\sigma) + \bigo{\sigma^{p - 1 - 1/p}}.
\end{equation}
\end{proof}

\subsubsection{Another representation of the counting function}

For every $\tau$, let us now  define a  family of sets $E_\sigma \subset \R^p$ with the property that
\begin{equation} \label{eq:goalindicator}
\tilde N^\tau(\sigma) = \sum_{\bn \in \N^p} 2^q \chi\left(\frac{\bn}{\sigma}\right) + \bigo 1,
\end{equation}
where $\chi:= \chi_\sigma$ is the indicator function of $E_\sigma$. Let us define elliptic polar coordinates in $\R^p$ with the convention that $\theta_p=0$ :
 \begin{equation}
 \label{eq:ellipticpolar}
  \begin{aligned}
   r^2 &= \sum_{i \in \tau_1} \left(\frac{\pi x_i}{2a_i \sqrt q}\right)^2, \\
   x_j &= r \frac{2a_j \sqrt q}{\pi}\cos(\theta_j) \prod_{i < j} \sin(\theta_i).
  \end{aligned}
 \end{equation}
We define the family of sets
\begin{equation} \label{eq:Esigma}
 E_\sigma:= \set{(r,\btheta) \in \R^p : r^2 + \frac{2r}{\sigma}\sum_{j \in \tau_1}\frac{1}{a_j} g_j(\btheta) + \frac{H(\btheta)}{\sigma^2} < 1},
\end{equation}
with
\begin{equation} \label{eq:gj}
 g_j(\btheta) :=  \cos\theta_j \prod_{i < j} \sin\theta_i \arccot\left(\frac{1}{\sqrt q}\left[1 + \sum_{i \ne j} \left(\frac{x_i}{x_j}\right)^2\right]^{1/2}\right),
\end{equation}
and
\begin{equation*}
H=H(\btheta) = \sum_{j \in \tau_1} \frac{1}{a_j^2} \arccot\left(\frac{1}{\sqrt q} \left[1 + \sum_{i \ne j} \left(\frac{x_i}{x_j}\right)^2\right]^{1/2}\right)^2.
\end{equation*}
 From equation \eqref{eq:approxsigma}, we can observe that the evaluation of $\chi$ at $\sigma^{-1} \bn$ in coordinates \eqref{eq:ellipticpolar} is $1$ if and only if $\tilde \sigma_\bn < \sigma$. If $|\bn| > N$ as in Theorem \ref{thm:eigenlattice}, there are $2^q$ solutions close to any order to $\tilde \sigma_\bn$. This achieves our stated goal of equation \eqref{eq:goalindicator}. Let us now prove a few properties of the set $E_\sigma$ that will be required in the sequel.
 \begin{lem} \label{lem:curvature}
  There exists $\sigma_0$ such that for $\sigma> \sigma_0$ the set $E_\sigma$ is strictly convex and the principal curvatures of $\del E_\sigma$ are positive and uniformly bounded away from $0$. Furthermore, all the derivatives of the principal curvatures tend to $0$ as $\sigma \to \infty$.
 \end{lem}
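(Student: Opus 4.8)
\textbf{Proof proposal for Lemma \ref{lem:curvature}.}

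The plan is to treat $\del E_\sigma$ as a perturbation of the unit sphere and to control both the perturbation and its derivatives uniformly in the angular variables. First I would record that in the elliptic polar coordinates \eqref{eq:ellipticpolar}, the set $E_\sigma$ is, by definition \eqref{eq:Esigma}, of the form $\{r < \rho_\sigma(\btheta)\}$ where $\rho_\sigma$ solves the quadratic equation $\rho^2 + \frac{2\rho}{\sigma}\sum_j a_j^{-1} g_j(\btheta) + \sigma^{-2} H(\btheta) = 1$. The key point is that the functions $g_j$ and $H$ defined by \eqref{eq:gj} are smooth and bounded, \emph{together with all their angular derivatives}, uniformly on the angular domain: indeed $\arccot$ composed with the smooth positive expression $q^{-1/2}[1 + \sum_{i\ne j}(x_i/x_j)^2]^{1/2}$ is smooth and bounded away from its singularities as long as the argument stays in a compact subset of $(0,\infty)$ — which happens precisely on the closed angular region, the only delicate points being where some $x_j\to 0$, and there one checks that the product $\cos\theta_j\prod_{i<j}\sin\theta_i$ multiplying it, and the corresponding factors inside the $\arccot$, conspire to keep everything smooth (this is exactly the content invoked through Lemma \ref{lem:arccot}). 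Solving the quadratic then gives
\begin{equation*}
 \rho_\sigma(\btheta) = -\frac{1}{\sigma}\sum_{j}\frac{g_j(\btheta)}{a_j} + \sqrt{1 - \frac{H(\btheta)}{\sigma^2} + \frac{1}{\sigma^2}\Big(\sum_j \frac{g_j(\btheta)}{a_j}\Big)^2} = 1 + \bigo{\sigma^{-1}},
\end{equation*}
and, differentiating this closed-form expression, $\rho_\sigma = 1 + \sigma^{-1}\psi_\sigma(\btheta)$ where $\psi_\sigma$ and all its angular derivatives are $\bigo 1$ uniformly as $\sigma\to\infty$; equivalently $\del E_\sigma \to S^{p-1}$ in $C^\infty$.

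The second step is to pass from this radial control to control of the principal curvatures. For a hypersurface given in polar form $r = \rho(\btheta)$, the second fundamental form and the induced metric are rational expressions in $\rho$ and its first two angular derivatives, with denominators that are powers of $\sqrt{\rho^2 + |\nabla_\btheta\rho|^2}$ and hence uniformly bounded away from $0$ once $\rho$ is close to $1$. Plugging in $\rho = \rho_\sigma = 1 + \bigo{\sigma^{-1}}$ shows that the shape operator of $\del E_\sigma$ converges, in $C^\infty$ and uniformly, to the shape operator of the unit sphere, which is the identity. In particular the principal curvatures converge uniformly to $1$, so for $\sigma$ larger than some $\sigma_0$ they all lie in, say, $(1/2, 2)$; this gives positivity, the uniform lower bound, and strict convexity of $E_\sigma$ (a closed hypersurface with everywhere positive principal curvatures bounds a convex body). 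The last assertion, that all derivatives of the principal curvatures tend to $0$, is immediate from the same $C^\infty$-convergence: the derivatives of the limiting curvatures (constants equal to $1$) vanish, so the derivatives of the curvatures of $\del E_\sigma$ are $\bigo{\sigma^{-1}} \to 0$.

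I would carry the steps out in that order: (1) solve the quadratic to get the closed form for $\rho_\sigma$ and establish $C^\infty$ smallness of $\rho_\sigma - 1$, citing Lemma \ref{lem:arccot} for the uniform smoothness of $g_j$ and $H$ near the boundary of the angular region; (2) write down the standard formulas for the curvatures of a graph-over-sphere hypersurface and substitute; (3) conclude positivity, the uniform bound, convexity, and the vanishing of derivatives. The main obstacle is step (1) at the angular coordinate degeneracies $\{x_j = 0\}$ (and at $\theta_i \in \{0,\pi/2\}$ where the polar parametrization itself is singular): one must verify that the apparent singularities of $g_j$ and $H$ are removable and that the bounds on their derivatives are genuinely uniform up to the closure — this is precisely why the auxiliary Lemma \ref{lem:arccot} is needed, and everything else is a routine, if slightly tedious, computation with the polar-coordinate curvature formulas.
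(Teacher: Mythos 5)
Your proposal and the paper's proof rest on the same underlying perturbative idea — that $\del E_\sigma$ is a $C^\infty$-small ($O(\sigma^{-1})$) deformation of a round model hypersurface — but you realize it through a different representation. The paper treats $\del E_\sigma$ as a level set of a defining function $F$ (in the Cartesian $\bx$ coordinates), records that $F$, $\nabla F$, and $\operatorname{Hess} F$ are $O(\sigma^{-1})$-perturbations of those of a positive-definite quadratic form, and reads off the second fundamental form directly from $\operatorname{Hess} F$; you instead solve the quadratic for the explicit polar radius $\rho_\sigma(\btheta)$ and invoke the graph-over-sphere curvature formulas. Both are legitimate and both reduce the lemma to the same analytical input. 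One place where you slightly overstate the available input: you attribute the uniform boundedness of \emph{all} angular derivatives of $g_j$ and $H$ to Lemma~\ref{lem:arccot}, but that lemma (and its proof in Appendix~\ref{appendix:arccot}) only controls a first-order difference/first derivatives, not the full jet. The paper handles this more cleanly by observing that $r g_j$ and $H$, viewed as functions of $\bx$ rather than of $\btheta$, are smooth away from the origin of $\R^p$, and that $\del E_\sigma$ is uniformly bounded away from the origin for $\sigma$ large; from that, uniform boundedness of all derivatives follows without ever fighting the polar-coordinate degeneracies at $\theta_i\in\{0,\pi/2\}$. Your step~(1) should therefore be justified by that Cartesian-smoothness observation (which you gesture at when noting the singularities may be removable) rather than by Lemma~\ref{lem:arccot}; with that substitution your argument is complete and correct.
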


 \begin{proof}  From equation $\eqref{eq:Esigma}$ $\del E_\sigma$ is the level set of a function $F$ satisfying
\begin{equation} \label{eq:esigma}
\begin{aligned}
 F(r,\btheta) &= r^2 + \bigo{\sigma^{-1}}, \\
\left[\nabla F(\bx)\right]_i &= \frac{\pi x_i}{a_i \sqrt{q}} + \bigo{\sigma^{-1}}, \\
\operatorname{Hess} F &= \operatorname{diag}\left(\frac{\pi}{a_i \sqrt q}\right)_{i \in \tau_1} + \bigo{\sigma^{-1}},
 \end{aligned}
\end{equation}
 with the error estimates uniform in $\del E_\sigma$. This yields that for $\sigma$ large enough, the second fundamental form of $\del E_\sigma$ is positive, with its smallest eigenvalue uniformly bounded away from $0$. This implies the claim on the principal curvatures, which in turn implies strict convexity. 
 
 As for the derivatives of the principal curvatures, they are the derivatives of the eigenvalues of $\operatorname{Hess} F$. Observe that $r g_j$ and $H$ are smooth away from the origin, hence all their derivatives are bounded on $\del E_\sigma$. This implies that the derivatives of $\operatorname{Hess} F$ are $\bigo{\sigma^{-1}}$, hence the derivatives of its eigenvalues as well and they go to $0$ as $\sigma \to \infty$. 
 \end{proof}
 This argument also yields the following corollary.
 \begin{cor}
  The product of the principal curvatures of $\del E_\sigma$ is uniformly bounded away from zero for $\sigma$ large enough.
 \end{cor}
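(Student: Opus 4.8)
The plan is to read the corollary off directly from Lemma~\ref{lem:curvature} together with the uniform estimates \eqref{eq:esigma} established in its proof. Since $\del E_\sigma$ is a compact hypersurface in $\R^p$ (recall $p\ge 2$ in this subsection), it carries exactly $p-1$ principal curvatures $\kappa_1(\sigma),\dotsc,\kappa_{p-1}(\sigma)$, namely the eigenvalues of its shape operator, and the quantity to be estimated is the product $\prod_{j=1}^{p-1}\kappa_j(\sigma)$.

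First I would recall that the shape operator of the level set $\del E_\sigma=\{F=\text{const}\}$ is built from $\operatorname{Hess}F$ restricted to the tangent space and normalized by $|\nabla F|$. By \eqref{eq:esigma}, on $\del E_\sigma$ one has $\nabla F=\left(\tfrac{\pi x_i}{a_i\sqrt q}\right)_{i\in\tau_1}+\bigo{\sigma^{-1}}$ — in particular $|\nabla F|$ stays bounded away from $0$ — and $\operatorname{Hess}F=\operatorname{diag}\left(\tfrac{\pi}{a_i\sqrt q}\right)_{i\in\tau_1}+\bigo{\sigma^{-1}}$, with the errors uniform on $\del E_\sigma$. Consequently the shape operator of $\del E_\sigma$ converges, uniformly in the base point, to the shape operator of the fixed ellipsoid $\mathcal{E}=\{\bx\in\R^p:\sum_{i\in\tau_1}(\tfrac{\pi x_i}{2a_i\sqrt q})^2=1\}$, whose principal curvatures are positive and bounded below by some $\kappa_0>0$. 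By continuity of eigenvalues under uniformly small perturbations of a symmetric operator — which is precisely how Lemma~\ref{lem:curvature} is proved — there is $\sigma_1\ge\sigma_0$ with $\kappa_j(\sigma)\ge\kappa_0/2$ for every $j$ and every $\sigma>\sigma_1$; of course one may also simply invoke the conclusion of Lemma~\ref{lem:curvature} verbatim.

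It then remains only to multiply: for $\sigma>\sigma_1$,
\[
 \prod_{j=1}^{p-1}\kappa_j(\sigma)\ \geq\ \Bigl(\tfrac{\kappa_0}{2}\Bigr)^{p-1}\ >\ 0,
\]
a lower bound independent of $\sigma$, which is the claim. I do not expect any serious obstacle here: the only point requiring a little care is the assertion that the shape operators converge \emph{uniformly}, and this is already secured in the proof of Lemma~\ref{lem:curvature} by the observations that $\nabla F$ is bounded away from $0$ on $\del E_\sigma$ and that $rg_j$ and $H$ are smooth with bounded derivatives away from the origin. Everything else is the elementary fact that a product of finitely many quantities, each bounded below by a fixed positive constant, is itself bounded below by a fixed positive constant.
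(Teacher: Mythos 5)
Your proof is correct and follows essentially the same route the paper intends: Lemma~\ref{lem:curvature} already gives a uniform positive lower bound on each of the $p-1$ principal curvatures, so the product is bounded below by that constant raised to the power $p-1$. The paper simply remarks that ``this argument also yields the corollary,'' and your write-up makes that deduction explicit without adding or omitting anything essential.
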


 \subsubsection{Poisson Summation Formula}
 In this section, we use the general scheme of the proof of \cite[Theorem 1.1]{lagaceparnovski}.
Recall that
\begin{equation} \label{eq:recall}
\begin{aligned}
 N^{\tau}(\sigma) &= \sum_{\bn \in \N^p} 2^q \chi\left(\frac \bn \sigma\right) + \bigo 1\\
 &= 2^{q - p}\sum_{\bn \in \Z^p} \chi\left(\frac{\bn}{\sigma} \right) + R_{\tau}(\sigma) + \bigo 1,
 \end{aligned}
\end{equation}
where $R_{\tau}(\sigma)$ is the error term induced by the overcounting of
points on hyperplanes with one vanishing coordinate.

Our goal is now to compute the terms appearing in equation \eqref{eq:recall} using the Poisson summation formula which states, under sufficient smoothness assumptions that
\begin{equation} \label{eq:psf}
 \sum_{\bn \in \Z^p} f(\bn) = \sum_{\bm \in \Z^p} \hat f(\bm)
\end{equation}
where the Fourier transform is given by
\begin{equation*}
 \hat f(\bxi) := \int_{\R^p} f(\bx) e^{-2 \pi i \bx \cdot \bxi} \de \bx.
\end{equation*}

However, $\chi$ is not regular  enough for us to use the Poisson summation
formula, hence we need to mollify it. Let us introduce a nonnegative function $\psi \in
C_c^\infty(\R)$ supported in $[-1,1]$ and such that 
\begin{equation*}
 \int_0^\infty \psi(r) r^{p-1} \de r = \frac{1}{V_{p-1}},
\end{equation*}
with $V_{p-1}$ being the volume of the $p-1$ dimensional unit sphere in $\R^p$. We then define a family $\Psi_\epsilon : \R^p \to \R$ of radial bump functions of total mass $1$ by
\begin{equation*}
 \Psi_\epsilon(\bx) = \frac{1}{\epsilon^p}\psi\left(\frac{|\bx|}{\epsilon}\right).
\end{equation*}
Set $\Psi := \Psi_1$ Consider the smooth function $\chi_\epsilon = \Psi_\epsilon * \chi$. Note that 
\begin{equation*}
 \hat \Psi_\epsilon(\bxi) = \hat \Psi(\epsilon \bxi) 
\end{equation*}
We now prove the following lemma.
\begin{lem}
 Let $\chi_\epsilon^+, \chi_\epsilon^- : \R^p \to \R$ be defined by
 \begin{equation*}
 \begin{aligned}
  \chi_\epsilon^+(\bx) &= \chi_\epsilon\left((1-\eta_+\epsilon)\bx\right) \\
  \chi_\epsilon^-(\bx) &= \chi_\epsilon\left((1+\eta_-\epsilon)\bx\right)
  \end{aligned}
 \end{equation*}
 for some $\eta_-,\eta_+>0$. One can choose $\eta_-,\eta_+$ in such a way that for all $\sigma$ large enough
 \begin{equation*}
  \chi_\epsilon^-(\bx) \le \chi(\bx) \le \chi_\epsilon^+(\bx)
 \end{equation*}
for all $\bx \in \R^p$ and all $\epsilon>0$ small enough.
\end{lem}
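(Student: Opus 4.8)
The plan is to show that the mollification moves the boundary $\partial E_\sigma$ inward/outward by a controlled amount, so that comparing $\chi$ with the dilated mollified functions amounts to comparing nested convex domains. The key input is Lemma~\ref{lem:curvature}: since $\partial E_\sigma$ is strictly convex with principal curvatures bounded away from $0$ uniformly in $\sigma$, there is a uniform lower bound $\rho>0$ on the distance from the origin to $\partial E_\sigma$ and a uniform two-sided control on how far a point of $\partial E_\sigma$ sits relative to nearby level sets of $F$. Concretely, I would first record that $\chi_\epsilon = \Psi_\epsilon * \chi$ differs from $\chi$ only in the $\epsilon$-neighbourhood of $\partial E_\sigma$, and that $\chi_\epsilon(\bx)=1$ whenever $\dist(\bx,\partial E_\sigma)\ge\epsilon$ and $\bx\in E_\sigma$, while $\chi_\epsilon(\bx)=0$ whenever $\dist(\bx,\partial E_\sigma)\ge\epsilon$ and $\bx\notin E_\sigma$.

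Next I would translate the dilation by $1\pm\eta_\pm\epsilon$ into a normal displacement of the boundary. For $\bx$ with $|\bx|\asymp 1$ (which is the only relevant regime, since $E_\sigma$ is sandwiched between two fixed balls by Lemma~\ref{lem:curvature}), replacing $\bx$ by $(1-\eta_+\epsilon)\bx$ moves $\bx$ toward the origin by a distance comparable to $\eta_+\epsilon|\bx|\asymp\eta_+\epsilon$. Because $\partial E_\sigma$ is convex and star-shaped about the origin (again by Lemma~\ref{lem:curvature} and the explicit form~\eqref{eq:Esigma}), moving every point radially inward by $c\,\eta_+\epsilon$ moves the boundary inward by at least some $c'\eta_+\epsilon$ in the normal direction, with $c'>0$ independent of $\sigma$. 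Hence if $\eta_+$ is chosen so that $c'\eta_+\ge 1$, then for $\bx\in E_\sigma$ one has either $\dist(\bx,\partial E_\sigma)\ge\epsilon$, in which case $\chi_\epsilon\big((1-\eta_+\epsilon)\bx\big)=1=\chi(\bx)$, or $(1-\eta_+\epsilon)\bx$ lies at distance $\ge\epsilon$ from $\partial E_\sigma$ on the \emph{interior} side, again giving $\chi_\epsilon^+(\bx)=1$; and for $\bx\notin E_\sigma$ trivially $\chi_\epsilon^+(\bx)\ge 0=\chi(\bx)$. Thus $\chi(\bx)\le\chi_\epsilon^+(\bx)$ for all $\bx$. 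The lower bound $\chi_\epsilon^-(\bx)\le\chi(\bx)$ is symmetric: dilating by $1+\eta_-\epsilon$ pushes $\bx$ radially outward, so for $\bx\notin E_\sigma$ the point $(1+\eta_-\epsilon)\bx$ is at distance $\ge\epsilon$ outside $\partial E_\sigma$ once $\eta_-$ is large enough, giving $\chi_\epsilon^-(\bx)=0$, while for $\bx\in E_\sigma$ one has $\chi_\epsilon^-(\bx)\le 1=\chi(\bx)$.

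The one subtlety, and the step I expect to be the main obstacle, is making the radial-displacement-to-normal-displacement conversion \emph{uniform in $\sigma$}, including near $\btheta$ where $\partial E_\sigma$ is close to a coordinate hyperplane and the functions $g_j$, $H$ in~\eqref{eq:gj} involve $\arccot$ of ratios $x_i/x_j$ that degenerate. This is exactly where Lemma~\ref{lem:curvature} and its corollary are used: the uniform lower bound on the principal curvatures (equivalently, an interior rolling-ball condition with a fixed radius) guarantees that the angle between the radial direction and the outward normal at any point of $\partial E_\sigma$ is bounded away from $\pi/2$ uniformly, so a radial move of size $c\eta\epsilon$ always produces a normal move of size at least $c'\eta\epsilon$; and the fact that $E_\sigma$ contains a fixed ball $B_\rho(0)$ keeps $|\bx|\asymp 1$ on $\partial E_\sigma$, so the radial displacement $\eta\epsilon|\bx|$ is itself comparable to $\eta\epsilon$. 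Granting this, I would fix $\eta_\pm$ (depending only on the uniform geometric constants, hence only on $p$ and the $a_i$, not on $\sigma$ or $\epsilon$), and conclude the two inequalities for all $\sigma>\sigma_0$ and all sufficiently small $\epsilon$, which is precisely the statement of the lemma.
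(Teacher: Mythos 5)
Your proposal is correct and takes essentially the same approach as the paper: reduce to points of $\del E_\sigma$ by convexity and star-shapedness, then show that the dilation by $1\pm\eta_\mp\epsilon$ pushes such a point at least $\epsilon$ away from $\del E_\sigma$ in the normal direction, with uniformity in $\sigma$ supplied by Lemma~\ref{lem:curvature}. The paper makes your radial-to-normal conversion explicit through the expansion $\dist((1+t)\bx,\del E_\sigma)=(\bx\cdot\mathcal N_{\del E_\sigma}(\bx))\,t+\bigo{t^2}$ together with the uniform positive lower bound on $\bx\cdot\mathcal N_{\del E_\sigma}(\bx)$, which is precisely your ``angle between the radial direction and the normal bounded away from $\pi/2$'' combined with $|\bx|\asymp 1$ on $\del E_\sigma$.
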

\begin{proof}
 For the first inequality, observe that
 \begin{equation*}
  \begin{aligned}
   \chi_\epsilon((1 + \eta_- \epsilon)\bx) &= \int_{\R^p} \chi(\by) \Psi_\epsilon((1 + \eta_- \epsilon)\bx - \by) \de \by \\
   &= \int_{B_{(1 + \eta_- \epsilon) \bx}(\epsilon)} \chi(\by) \Psi_\epsilon((1 + \eta_- \epsilon)\bx - \by) \de \by\\
   &\le \sup_{B_{(1 + \eta_- \epsilon) \bx}(\epsilon)} \chi(\by).
  \end{aligned}
 \end{equation*}
 Hence, to show that $\chi_\epsilon((1 + \eta_-\epsilon)\bx) \le
 \chi(\bx)$ for all $\bx$, by convexity of $E_\sigma$ it is sufficient to
 show that for all $\bx \in \del E_\sigma$, there exists $\eta_-$,
 independent of $\sigma$ such that the following holds for each
 $\epsilon>0$ small enough
\begin{equation*}
 B_{(1 + \eta_- \epsilon)\bx}(\epsilon) \cap E_\sigma = \varnothing.
\end{equation*}
Note that for all $\bx \in \del E_\sigma$, we have that
\begin{equation} \label{eq:distboundary}
 \operatorname{dist}((1 + t) \bx, \del E_\sigma) = (\bx \cdot \mathcal N_{\del E_\sigma}(\bx)) t + \bigo{t^2},
\end{equation}
where $\mathcal N_{\del E_\sigma}$ is the Gauss map of the boundary. To see this, denote by $T_{\bx}\del E_\sigma$ the tangent hyperplane of $\del E_\sigma$ at $\bx$, and by $P_\bx$ the orthogonal projection on that hyperplane. We have by the triangle inequality that
\[
 \left|\operatorname{dist}((1 + t) \bx, \del E_\sigma) - \operatorname{dist}((1 + t) \bx, T_\bx \del E_\sigma) \right| \le  \operatorname{dist}(P_\bx((1+t)\bx),\del E_\sigma).
\]
We observe that $\operatorname{dist}((1 + t) \bx, T_\bx \del E_\sigma) = (\bx \cdot \mathcal N_{\del E_\sigma}(\bx)) t$. Let $F$, as before, be the function in $\R^p$ such that the set $F \equiv 1$ coincides with $\partial E_\sigma$.  
Taking the  Taylor expansion of $F$ around  $\bx$,  we have that
\[
 \operatorname{dist}(P_\bx((1+t)\bx),\del E_\sigma) \le \|\operatorname{Hess} F(\bx) \|_{\infty} |P_\bx((1 + t)\bx)|^2 = \bigo{t^2},
\]
where we used that $\|\operatorname{Hess} F(\bx) \|_{\infty}$ is bounded uniformly for $\sigma > \sigma_0$ and $\bx \in \del E_\sigma$. Note that the strict convexity of $\del E_\sigma$ and equation \eqref{eq:esigma} imply that
$\bx \cdot \mathcal N_{\del E_\sigma}(\bx)$ is bounded away from zero uniformly for $\sigma > \sigma_0$. This implies that we can
choose $\eta_-$ large enough and independent in $\sigma$ such that
indeed
\begin{equation*}
 B_{(1 + \eta_- \epsilon)\bx}(\epsilon) \cap E_\sigma = \varnothing.
\end{equation*}
For the second inequality, we have
  \begin{equation*}
  \begin{aligned}
   \chi_\epsilon((1 - \eta_+ \epsilon)\bx) &= \int_{\R^p} \chi(\by) \Psi_\epsilon((1 - \eta_+ \epsilon)\bx - \by) \de \by \\
   &= \int_{B_{(1 - \eta_+ \epsilon) \bx}(\epsilon)} \chi(\by) \Psi_\epsilon((1 - \eta_+ \epsilon)\bx - \by) \de \by\\
   &\ge \inf_{B_{(1 - \eta_+ \epsilon) \bx}(\epsilon)} \chi(\by).
  \end{aligned}
 \end{equation*}
Hence, to show that $\chi(\bx) \le \chi_\epsilon((1 - \eta_+)\bx)$, it is sufficient to show that for all $\bx \in \del E_\sigma$, there exists $\eta_+$ independent of $\sigma$ such that
\begin{equation*}
 B_{(1 - \eta_+)\bx}(\epsilon) \subset E_\sigma.
\end{equation*}
Using once again equation \eqref{eq:distboundary} and arguing exactly as above yields the desired  number $\eta_+$.
\end{proof}
The following is an immediate corollary of the previous lemma:
\begin{cor}
 We have that
 \begin{equation*}
  \sum_{\bn \in \Z^p} \chi_\epsilon^-\left(\frac \bn \sigma \right) \le  \sum_{\bn \in \Z^p} \chi\left(\frac \bn \sigma \right) \le \sum_{\bn \in \Z^p} \chi_\epsilon^+\left(\frac \bn \sigma \right).
 \end{equation*}
\end{cor}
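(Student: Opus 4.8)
The plan is to deduce the corollary directly from the preceding lemma, since the pointwise sandwich $\chi_\epsilon^-(\bx) \le \chi(\bx) \le \chi_\epsilon^+(\bx)$ established there already does all the work. First I would record that, for each fixed $\sigma$ large enough and each $\epsilon$ small enough, all three sums appearing in the statement are in fact \emph{finite} sums. Indeed, by Lemma~\ref{lem:curvature} the set $E_\sigma$ is a bounded (strictly convex) body, uniformly close to the unit ball, so its indicator $\chi$ has compact support; since the mollifier $\Psi_\epsilon$ is supported in the unit ball, $\chi_\epsilon = \Psi_\epsilon * \chi$ is compactly supported as well, and the linear dilations $\bx \mapsto (1 \mp \eta_\pm \epsilon)\bx$ defining $\chi_\epsilon^\pm$ preserve compactness of the support (with $\eta_\pm$ independent of $\sigma$ by the previous lemma). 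Hence only finitely many lattice points $\bn \in \Z^p$ contribute to any of the three sums, and there is no convergence issue to worry about.

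Having established this, the remaining step is a mere bookkeeping exercise: evaluate the pointwise inequalities from the preceding lemma at each point $\bx = \bn/\sigma$ with $\bn \in \Z^p$, and add up the resulting finite collection of inequalities. Term-by-term summation preserves the order relation, yielding exactly the asserted chain of inequalities. I do not expect any genuine obstacle here — the entire substance lies in the previous lemma (whose proof exploited the strict convexity of $E_\sigma$ together with the uniform curvature bounds of Lemma~\ref{lem:curvature}) — so this corollary is recorded simply as the form in which the sandwich will be applied when we later invoke the Poisson summation formula for the smooth functions $\chi_\epsilon^\pm$.
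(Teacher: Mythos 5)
Your argument is correct and is exactly the route the paper takes: the paper simply labels this an ``immediate corollary'' of the pointwise sandwich $\chi_\epsilon^-\le\chi\le\chi_\epsilon^+$, and you supply the same one-line summation argument. Your extra remark that the sums are finite (so termwise summation is unproblematic) is a harmless, correct elaboration that the paper leaves implicit.
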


We will now apply the Poisson summation formula \eqref{eq:psf} to $\chi_\epsilon^\pm$, which are smooth functions. This yields, using the basic properties of the Fourier transform,
\begin{equation} \label{eq:poisson}
 \begin{aligned}
  \sum_{\bn \in \Z^p} \chi_\epsilon^\pm \left(\frac{\bn}{\sigma}\right) &= \sigma^p \sum_{\bm \in \Z^p} \hat \chi_\epsilon^\pm(\sigma \bm) \\
  &= \sigma^p \sum_{\bm \in \Z^p} \left(1 + \bigo{\epsilon}\right) \hat
  \chi\left(\frac{\sigma \bm}{1 \mp \eta_\pm \epsilon}\right)
  \hat\Psi\left(\frac{\epsilon \bm \sigma}{1\mp \eta_\pm\epsilon}\right) \\
  &= \sigma^p\Vol(E_\sigma) + \bigo{\epsilon \sigma^p}  \\ & \qquad+ \, \bigo{\sum_{\substack{\bm \in \Z^p \\ \bm \ne 0}} \sigma^p\hat \chi\left(\frac{\sigma \bm}{1 \mp \eta_\pm \epsilon}\right) \hat\Psi\left(\frac{\epsilon \bm \sigma}{1\mp \eta_\pm\epsilon}\right)}.
 \end{aligned}
\end{equation}
Note that for this expression to hold, we will need to later choose $\epsilon = \smallo 1$. Since $\Psi$ is a Schwartz function, its Fourier transform is also Schwartz, hence to find estimates on the asymptotic behaviour of equation \eqref{eq:poisson}, we only need to find bounds on $\hat \chi$. This is done in the following Lemma.

\begin{lem}
 For $\sigma$ large enough, the Fourier transform of $\chi$ satisfies  the upper bound
 \begin{equation}\label{equation:fourier}
  \begin{aligned}
 \hat \chi (\bxi) &= \bigo{|\bxi|^{-\frac{d+1}{2}}}.
 \end{aligned}
\end{equation}
\end{lem}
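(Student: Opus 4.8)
The plan is to estimate $\hat\chi$ by the classical argument for the Fourier transform of the indicator of a convex body with everywhere positive Gaussian curvature, in the spirit of \cite{hlawka,randol}. Since $E_\sigma$ is not a fixed body, the crux of the matter is that every estimate must be \emph{uniform in $\sigma$}, and this is precisely what Lemma~\ref{lem:curvature} and its corollary were established for.

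First I would apply the divergence theorem to trade the volume integral over $E_\sigma$ for a surface integral over $\del E_\sigma$, gaining one power of $|\bxi|$ in the process. Writing $\mathcal N$ for the outward unit normal of $\del E_\sigma$ and integrating over $E_\sigma$ the identity $\nabla\cdot\left(\frac{i\bxi}{2\pi|\bxi|^2}e^{-2\pi i\bx\cdot\bxi}\right)=e^{-2\pi i\bx\cdot\bxi}$ gives
\[
 \hat\chi(\bxi)=\frac{i}{2\pi|\bxi|^2}\int_{\del E_\sigma}(\bxi\cdot\mathcal N(\bx))\,e^{-2\pi i\bx\cdot\bxi}\,\de S(\bx).
\]
Since $\del E_\sigma$ has uniformly bounded area and $|\bxi\cdot\mathcal N|\le|\bxi|$, this alone yields $\bigo{|\bxi|^{-1}}$; the remaining decay must come from the oscillation of the surface integral.

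Next I would estimate $J(\bxi):=\int_{\del E_\sigma}(\hat\bxi\cdot\mathcal N)\,e^{-2\pi i|\bxi|\,\bx\cdot\hat\bxi}\,\de S$, with $\hat\bxi=\bxi/|\bxi|$, by the method of stationary phase in the $p-1$ variables parametrising the hypersurface. The phase $\bx\mapsto\bx\cdot\hat\bxi$ restricted to $\del E_\sigma$ is critical exactly at the two points where $\mathcal N(\bx)=\pm\hat\bxi$ (there are precisely two, since $\del E_\sigma$ is strictly convex by Lemma~\ref{lem:curvature}), and at those points its Hessian equals, up to sign, the second fundamental form, whose determinant is the Gaussian curvature of $\del E_\sigma$. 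Using a partition of unity one splits $\del E_\sigma$ into a fixed neighbourhood of these two points and its complement: on the complement $\nabla_{\del E_\sigma}(\bx\cdot\hat\bxi)$ is bounded away from $0$, so repeated integration by parts gives $\bigo{|\bxi|^{-\infty}}$, while near each critical point the stationary phase lemma in $p-1$ variables contributes a term of size $|\bxi|^{-(p-1)/2}$. Hence $J(\bxi)=\bigo{|\bxi|^{-(p-1)/2}}$, the surface integral above equals $|\bxi|\,J(\bxi)=\bigo{|\bxi|^{(3-p)/2}}$, and dividing by $|\bxi|^2$ yields $\hat\chi(\bxi)=\bigo{|\bxi|^{-(p+1)/2}}$.

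The hard part is entirely the uniformity in $\sigma$: the implied constants in the stationary phase expansion depend on a lower bound for the Gaussian curvature of $\del E_\sigma$, on upper bounds for finitely many derivatives of that curvature and of the amplitude $\hat\bxi\cdot\mathcal N$, and on being able to cover $\del E_\sigma$ by a single fixed family of coordinate charts. All of this is supplied by \eqref{eq:esigma} and Lemma~\ref{lem:curvature}: for $\sigma>\sigma_0$ the surface $\del E_\sigma$ can be written as graphs over the coordinate hyperplanes with $C^k$-norms bounded uniformly in $\sigma$ (indeed $\del E_\sigma$ converges smoothly to a sphere), its principal curvatures stay uniformly positive, and $\bx\cdot\mathcal N(\bx)$ stays uniformly bounded above and below. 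Once these inputs are in place, the classical estimate goes through with $\sigma$-independent constants, and taking $\sigma$ large enough completes the proof.
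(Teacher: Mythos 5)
Your proof is correct and, in substance, matches the paper's: the paper simply cites \cite[Theorem 2.29]{IL} as a black box for the statement that the Fourier transform of the indicator of a smooth strictly convex body in $\R^p$ decays like $|\bxi|^{-(p+1)/2}$, with constants controlled by a lower bound on the Gaussian curvature and by finitely many derivatives, and then invokes Lemma~\ref{lem:curvature} (and equation~\eqref{eq:esigma}) to make those constants uniform in $\sigma$. You unwind exactly that cited theorem: the divergence theorem to gain one power of $|\bxi|$, strict convexity to locate the two stationary points of $\bx\mapsto\bx\cdot\hat\bxi$ on $\del E_\sigma$, stationary phase in $p-1$ variables giving $|\bxi|^{-(p-1)/2}$, non-stationary phase on the complement giving $\bigo{|\bxi|^{-\infty}}$, and the curvature lemma to supply the uniform lower bound on the Gaussian curvature and the uniform $C^k$ bounds needed for the implied constants. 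This is a legitimate, self-contained alternative to the citation; it buys nothing logically new but does make the proof independent of \cite{IL}. One small but worth noting point: the exponent in the lemma as stated reads $-(d+1)/2$, but since $E_\sigma\subset\R^p$ the stationary phase argument only yields $-(p+1)/2$, which is exactly what your computation produces and exactly what the paper uses in the subsequent estimate \eqref{eq:psferror}; the $d$ in the lemma's display is a typo for $p$, and your derivation quietly corrects it.
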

\begin{proof}
For
$\sigma$ large enough, the set $E_\sigma$ is strictly convex and has
smooth boundary. Therefore, following \cite[Theorem 2.29]{IL} we have
that for any function $f \in C^\infty(\R^p)$ such that $f \ne 0$ on
$\del E_\sigma$,
\begin{equation*}
\begin{aligned}
 \int_{E_\sigma} f(\bx) e^{-2\pi i \bx \cdot \bxi} \de \bx &= \bigo{|\bxi|^{-\frac{d+1}{2}}},
 \end{aligned}
\end{equation*}
where the implicit constants depend on the product of the principal curvatures of $\del
E_\sigma$ and stay bounded as long as the principal curvatures are bounded away from $0$. Hence, by equation \eqref{eq:esigma}, these constants will
be uniformly bounded for $\sigma$ large enough. Applying this result with
$f(\bx)\equiv 1$ yields the desired result.
\end{proof}
\begin{rem}
 Note that the estimates and the error terms obtained in \cite[Theorem 2.29]{IL} depend on the bounds on the  derivatives of the principal curvatures. By Lemma \ref{lem:curvature} the derivatives of the principal curvatures of $\del E_\sigma$ 
 tend to zero as $\sigma \to \infty$, and therefore they could be bounded uniformly for $\sigma > \sigma_0$.
\end{rem}

We now find the dependence on $\epsilon$ of the third summand in $\eqref{eq:poisson}$. We will choose the optimal value of $\epsilon$ such that the second and the third terms are both as small as possible. Splitting the third summand into two terms we use equation
\eqref{equation:fourier} and the fact that $\hat\Psi$ is a Schwartz
function to obtain
\begin{equation*}
\begin{aligned}
 \bigo{\sum_{\substack{\bm \in \Z^p \\ m \ne 0}} \sigma^p \hat \chi\left(\frac{\bm \sigma}{1\mp \eta_\pm\epsilon}\right) \hat\Psi\left(\frac{\epsilon \bm \sigma}{1\mp \eta_\pm\epsilon}\right)} = O &\left(\sum_{0 < |\bm| \le (\epsilon \sigma)^{-1}} \frac{\sigma^\frac{p-1}{2}\left(1 \mp \eta_\pm \epsilon\right)^{\frac{p+1}{2}}}{|\bm|^{\frac{p+1}{2}}}\right. \\ &\qquad + \left.\sum_{|\bm|> (\epsilon \sigma)^{-1}} \frac{\sigma^{\frac{p-1}{2}}\left(1 \mp \eta_\pm \epsilon\right)^{\frac{p+1}{2}+N}}{ |\bm|^{\frac{p+1}2 + N} (\sigma \epsilon)^N}\right),
\end{aligned}
 \end{equation*}
for an arbitrary $N>0$ which will be fixed below.
Assuming that $\epsilon$ is small and  and taking into account that the summands on the right hand side are decreasing in $|\bm|$, we may estimate the  first of those sums  by
\begin{equation*}
\begin{aligned}
 \sum_{0 < |\bm| \le (\epsilon \sigma)^{-1}} \frac{\sigma^{\frac{p-1}{2}}\left(1 \mp \eta_\pm \epsilon\right)^{\frac{p+1}{2}}}{|\bm|^{\frac{p+1}{2}}} &\asymp \sigma^{\frac{p-1}{2}} \int_1^{(\epsilon \sigma)^{-1}} \frac{r^{p-1}}{r^{\frac{p+1}2 }} \de r \\
 &= \bigo{\epsilon^{\frac{1-p}2}}.
 \end{aligned}
\end{equation*}
The second of those sums can be estimated, for $N$ large enough that the integral converges, by 
\begin{equation*}
\begin{aligned}
\sum_{|\bm|> (\epsilon \sigma)^{-1}} \frac{\sigma^{\frac{p-1}{2}}\left(1 \mp \eta_\pm \epsilon\right)^{\frac{p+1}{2}+N}}{|\bm|^{\frac{p+1}2 + N} (\sigma \epsilon)^N} &\asymp \sigma^{\frac{p-1}{2}}(\sigma\epsilon)^{-N} \int_{(\epsilon \sigma)^{-1}}^\infty \frac{r^{p-1}}{r^{\frac{p+1}2 + N }} \de r \\
 &=  \bigo{\epsilon^{\frac{1-p}2}}.
 \end{aligned}
\end{equation*}

The optimal $\epsilon$ to make both $\sigma^p \epsilon$ and $\epsilon^{\frac{1-p}{2}}$ as small as possible is 
\begin{equation*}
 \epsilon = \sigma^{\frac{-2p}{1 + p}},
\end{equation*}
yielding that
\begin{equation} \label{eq:psferror}
 \sum_{n \in \Z^p} \chi_\epsilon^\pm \left(\frac{n}{\sigma}\right) = \sigma^p\Vol(E_\sigma) + \bigo{\sigma^{p - 2 + \frac{2}{1+p}}}.
\end{equation}

We now compute the volume of $E_\sigma$.

\begin{lem} \label{lem:volumeesigma}  Let $\Sigma = \S^{p-1} \cap \R^p_+$. We have:
\begin{equation} \label{eq:volumeesigma}
  \Vol_p(E_\sigma) = \frac{2^p\sqrt{q}^p}{\pi^p}\omega_p \prod_{j \in \tau_1} a_j - \frac{2^{2p} \sqrt{q}^p  G_{p,q}}{\pi^p\sigma} \sum_{j \in \tau_1} \prod_{i \ne j} a_i + \bigo{\sigma^{-2}},
\end{equation}
where
\begin{equation} \label{eq:Gpq}
  G_{p,q} = \int_\Sigma g_j(\btheta) \de \btheta,
\end{equation}
for any of the functions $g_j$ defined by equation \eqref{eq:gj}.
\end{lem}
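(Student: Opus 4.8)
The plan is to compute $\Vol_p(E_\sigma)=\int_{E_\sigma}\de\bx$ by integrating in the elliptic polar coordinates \eqref{eq:ellipticpolar}, treating the defining inequality in \eqref{eq:Esigma} as a perturbation of the Euclidean ball $\{r<1\}$ of order $\sigma^{-1}$. First I would record the Jacobian of the change of variables \eqref{eq:ellipticpolar}: since $x_j = r\,\frac{2a_j\sqrt q}{\pi}\cos\theta_j\prod_{i<j}\sin\theta_i$ is the standard spherical parametrization rescaled by the constants $\frac{2a_j\sqrt q}{\pi}$ in each coordinate, the Jacobian factors as $\Big(\prod_{j\in\tau_1}\frac{2a_j\sqrt q}{\pi}\Big)\, r^{p-1}\,\mathfrak{S}(\btheta)$, where $\mathfrak S(\btheta)=\prod_{k=1}^{p-2}\sin^{p-1-k}(\theta_k)$ (or whatever the ordering convention in \eqref{eq:ellipticpolar} dictates) is the usual spherical volume element, so that $\int_{\S^{p-1}\cap\R^p_+}\mathfrak S(\btheta)\,\de\btheta = 2^{-p}V_{p-1} = 2^{-p}\, p\,\omega_p$ over the positive orthant $\Sigma$.

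Next, for fixed $\btheta$, I would solve the quadratic inequality in $r$ from \eqref{eq:Esigma}. Writing $b(\btheta)=\sum_{j\in\tau_1}\frac1{a_j}g_j(\btheta)$ and $c(\btheta)=H(\btheta)$, the condition $r^2+\frac{2r}{\sigma}b+\frac{c}{\sigma^2}<1$ has the positive root $r_\sigma(\btheta)=-\frac{b}{\sigma}+\sqrt{1+\frac{b^2-c}{\sigma^2}}= 1-\frac{b(\btheta)}{\sigma}+\bigo{\sigma^{-2}}$, with the error uniform in $\btheta$ because $g_j$ and $H$ are smooth and bounded away from the origin on $\overline\Sigma$ (as used in the proof of Lemma \ref{lem:curvature}). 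Then
\begin{equation*}
\Vol_p(E_\sigma)=\Big(\prod_{j\in\tau_1}\frac{2a_j\sqrt q}{\pi}\Big)\int_\Sigma \mathfrak S(\btheta)\,\frac{r_\sigma(\btheta)^p}{p}\,\de\btheta
=\Big(\prod_{j\in\tau_1}\frac{2a_j\sqrt q}{\pi}\Big)\int_\Sigma \mathfrak S(\btheta)\,\frac1p\Big(1-\frac{p\,b(\btheta)}{\sigma}+\bigo{\sigma^{-2}}\Big)\de\btheta .
\end{equation*}
The leading term gives $\big(\prod_j\frac{2a_j\sqrt q}{\pi}\big)\cdot\frac1p\cdot 2^{-p}p\,\omega_p = \frac{2^p q^{p/2}}{\pi^p}\,\omega_p\prod_{j\in\tau_1}a_j$, matching \eqref{eq:volumeesigma}. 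For the $\sigma^{-1}$ term I would need $\int_\Sigma \mathfrak S(\btheta)\,b(\btheta)\,\de\btheta$; here the key observation is that $\mathfrak S(\btheta)\,g_j(\btheta)$ has, up to the factor $\arccot(\cdots)$, exactly the form of the $j$-th Cartesian coordinate direction times the spherical element, so by the symmetry of $\Sigma$ under permuting coordinates each $\int_\Sigma \mathfrak S\, g_j\,\de\btheta$ equals the \emph{same} constant $G_{p,q}$ of \eqref{eq:Gpq} after absorbing the normalization, and summing $\frac1{a_j}$ against $\prod_j \frac{2a_j\sqrt q}{\pi}$ produces $\frac{2^p q^{p/2}}{\pi^p}\sum_{j\in\tau_1}\prod_{i\ne j}a_i \cdot 2\,G_{p,q}$ — one collects the factor $2^{p}$ from the coordinate scalings and an extra $2^{p}$ is what converts $V_{p-1}$-type constants, giving the stated $2^{2p}$. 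I would double-check the constant bookkeeping here against the $p=1$ and $p=d-1$ special cases already computed in the paper.

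The main obstacle I anticipate is precisely this constant bookkeeping: reconciling the spherical-element normalization, the coordinate rescalings $\frac{2a_j\sqrt q}{\pi}$, and the definition of $G_{p,q}$ so that the powers of $2$ and of $q$ come out exactly as in \eqref{eq:volumeesigma}, and verifying that the claimed coordinate-independence of $\int_\Sigma g_j\,\de\btheta$ (``for any of the functions $g_j$'') really holds — this requires checking that the change of variables permuting the roles of $\theta$'s that swaps $x_i\leftrightarrow x_j$ preserves both $\Sigma$ and the measure $\de\btheta$, and maps $g_i$ to $g_j$, which is clear from the symmetric dependence of $g_j$ on the ratios $x_i/x_j$ but should be written out. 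The uniformity of the $\bigo{\sigma^{-2}}$ remainder is routine given the smoothness statements already established for $F$ in \eqref{eq:esigma}.
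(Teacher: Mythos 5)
Your approach is the same as the paper's: pass to elliptic polar coordinates, solve the quadratic for the boundary radius, expand in $\sigma^{-1}$, and integrate. But there is a genuine bookkeeping gap that is not just cosmetic. Your display
$$\Vol_p(E_\sigma)=\Big(\prod_{j\in\tau_1}\tfrac{2a_j\sqrt q}{\pi}\Big)\int_\Sigma \mathfrak S(\btheta)\,\tfrac{r_\sigma(\btheta)^p}{p}\,\de\btheta$$
integrates only over $\Sigma=\S^{p-1}\cap\R^p_+$, i.e.\ it computes $\Vol_p(E_\sigma\cap\R^p_+)$, not $\Vol_p(E_\sigma)$. But $E_\sigma$ here must be a subset of all of $\R^p$: the Poisson summation in \eqref{eq:psferror} is over $\bn\in\Z^p$, not $\N^p$, and the domain is (implicitly) symmetric under each $x_i\mapsto -x_i$. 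One therefore needs the orthant factor $2^p$ — this is exactly what the paper's phrase ``by symmetry'' supplies, and it is why the prefactor on the first display of the paper's proof is $2^{2p}\sqrt q^p/\pi^p$ rather than $2^p\sqrt q^p/\pi^p$. As written, your leading-term line
$\big(\prod_j\tfrac{2a_j\sqrt q}{\pi}\big)\cdot\tfrac1p\cdot 2^{-p}p\,\omega_p$
actually evaluates to $q^{p/2}\omega_p\pi^{-p}\prod_j a_j$, \emph{not} the claimed $2^p q^{p/2}\omega_p\pi^{-p}\prod_j a_j$; the discrepancy is precisely the missing $2^p$, which a silent arithmetic slip (dropping the $2^{-p}$) happened to mask.

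The same issue propagates to the $\sigma^{-1}$ term. The explanation ``an extra $2^p$ is what converts $V_{p-1}$-type constants'' is not where that power of two comes from — the conversion $V_{p-1}=p\,\omega_p$ involves no factor of $2$. The extra $2^p$ is again the orthant symmetry, and once it is inserted the constants close cleanly: one gets $-2^{2p}\sqrt q^p G_{p,q}\pi^{-p}\sigma^{-1}\sum_j\prod_{i\ne j}a_i$ with no further ``normalization.'' In particular $G_{p,q}=\int_\Sigma g_j\,\de\btheta$ \emph{is} literally $\int \mathfrak S(\btheta)\,g_j(\btheta)\,\prod_k\de\theta_k$ once one reads $\de\btheta$ as the surface measure on $\Sigma$, so there is nothing to absorb there. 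The remaining parts of your argument — the uniform expansion of the positive root, the smoothness away from the origin, the symmetry argument showing $\int_\Sigma g_j\,\de\btheta$ is independent of $j$ — are correct and match the paper.
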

\begin{rem}
 Note that $G_{p,q}$ does not depend on $j$ by the symmetry of the construction of $g_j$.
\end{rem}

\begin{proof}
  By symmetry, we have that
\begin{equation*}
 \begin{aligned}
  \Vol(E_\sigma) = \frac{2^{2p}\sqrt q^p}{\pi^p} \int_\Sigma \int_0^{\rho(\btheta)} r^{p-1} \prod_{j \in \tau_1} a_j \de r \de \btheta
 \end{aligned}
\end{equation*}
where $\rho(\btheta)$ is the unique positive root (in $r$) of the equation
\begin{equation*}
 r^2 + \frac{2r}{\sigma} \sum_{j \in \tau_1} \frac{g_j(\btheta)}{a_j} + \frac{H}{\sigma^2} - 1 = 0.
\end{equation*}
One can observe that
\begin{equation*}
 \rho(\btheta) = 1 - \frac{1}{\sigma} \sum_{j \in \tau_1} \frac{g_j(\btheta)}{a_j} + \bigo{\sigma^{-2}}.
\end{equation*}
Thus, we get that
\begin{equation*}
 \Vol(E_\sigma) = \frac{2^{2p}}{\pi^p} \prod_{j \in \tau_1} a_j \int_\Sigma \frac 1 p - \frac{1}{\sigma} \sum_{j \in \tau_1} \frac{g_j(\btheta)}{a_j} + \bigo{\sigma^{-2}} \de \btheta
\end{equation*}
Integrating and replacing in the previous equation the definition of $G_{p,q}$ in equation \eqref{eq:Gpq} yields
\begin{equation} \label{eq:volesigma}
 \Vol(E_\sigma) = \frac{2^p}{\pi^p}\omega_p \prod_{j \in \tau_1} a_j - \frac{2^{2p}  G_{p,q}}{\pi^p\sigma} \sum_{j \in \tau_1} \prod_{i \ne j} a_i + \bigo{\sigma^{-2}}.
\end{equation}
\end{proof}
Finally, we have to take into account the points that we have overcounted
with coefficient $1/2$ on the hyperplanes $\set{x_i = 0}$. This is given in the following lemma.
\begin{lem} \label{lem:overcounted}
 The number of overcounted points on the hyperplanes $\set{x_i = 0}$ is 
 \begin{equation} \label{eq:overcounted}
  R_{\tau}(\sigma) = \frac{\sqrt{q^p}2^{p} \omega_{p-1} \sigma^{p-1}}{4 (2\pi)^{p-1}}\sum_{j \in \tau_1} \prod_{i \ne j} a_i + \bigo{\sigma^{p-2}}.
 \end{equation}

\end{lem}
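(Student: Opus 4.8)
The plan is to identify $R_\tau(\sigma)$, as introduced in \eqref{eq:recall}, with an honest lattice point count on the coordinate hyperplanes $\{x_i=0\}$, $i\in\tau_1$, and then evaluate it. Recall that $N^\tau(\sigma)$ counts the points $\bn\in\N^p$ with $\bn/\sigma\in E_\sigma$, each with weight $2^q$, while $2^{q-p}\sum_{\bn\in\Z^p}\chi(\bn/\sigma)$ replaces this by a count symmetrized over all $2^p$ sign sectors of $\R^p$; the defect $R_\tau(\sigma)$ is therefore supported on the points of $\Z^p\cap\sigma E_\sigma$ lying on at least one hyperplane $\{x_i=0\}$. Points with two or more vanishing coordinates fill out a set of dimension at most $p-2$ inside $\sigma E_\sigma$, hence number $O(\sigma^{p-2})$ and can be thrown into the error term, so only the points with \emph{exactly one} vanishing coordinate matter. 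By \eqref{eq:esigma} and Lemma \ref{lem:curvature}, for $\sigma$ large $E_\sigma$ is an $O(\sigma^{-1})$--perturbation of the fixed ellipsoid $\{\sum_{i\in\tau_1}(\pi x_i/(2a_i\sqrt q))^2<1\}$, so in particular it is invariant, up to an error of the same order, under the $2^{p-1}$ reflections fixing a given hyperplane; consequently a point on $\{x_i=0\}$ with the remaining coordinates nonzero is counted in $\sum_{\bn\in\Z^p}\chi(\bn/\sigma)$ with multiplicity $2^{p-1}$ (that is, with ``coefficient $1/2$'' once the $2^{-p}$ symmetrization factor is taken into account), which is precisely the overcounting responsible for $R_\tau$.

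Thus I would fix $i\in\tau_1$ and estimate $\#\{\bn\in\Z^p:\ n_i=0,\ \bn/\sigma\in E_\sigma\}$, which is the number of points of the lattice $\Z^{p-1}$ (in the coordinates $j\in\tau_1\setminus\{i\}$) lying in $\sigma\,(E_\sigma\cap\{x_i=0\})$. Setting $x_i=0$ in \eqref{eq:Esigma} and using \eqref{eq:esigma}, the slice $E_\sigma\cap\{x_i=0\}$ is a convex body converging, uniformly in $\sigma$ and with principal curvatures bounded above and away from $0$ by Lemma \ref{lem:curvature} and its corollary, to the $(p-1)$--dimensional ellipsoid $\{\sum_{j\in\tau_1\setminus\{i\}}(\pi x_j/(2a_j\sqrt q))^2<1\}$, whose $(p-1)$--volume is $\omega_{p-1}\prod_{j\in\tau_1\setminus\{i\}}\frac{2a_j\sqrt q}{\pi}$. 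The standard Gauss-type lattice remainder estimate for a convex body in $\R^{p-1}$ then gives $\#\{\bn\in\Z^p:n_i=0,\ \bn/\sigma\in E_\sigma\}=\sigma^{p-1}\Vol_{p-1}(E_\sigma\cap\{x_i=0\})+O(\sigma^{p-2})=\sigma^{p-1}\omega_{p-1}\prod_{j\ne i}\frac{2a_j\sqrt q}{\pi}+O(\sigma^{p-2})$. Multiplying by the weight identified above, summing over $i\in\tau_1$, and simplifying the constant yields the asymptotics \eqref{eq:overcounted}.

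The computations here --- the volume of the ellipsoidal slice and the bookkeeping of the $2^{q}$, $2^{-p}$ and $1/2$ factors --- are routine; the point that genuinely requires care is that every error term be \emph{uniform in $\sigma$}. This applies both to the Gauss remainder on the slice (its implied constant depends only on the diameter of and curvature lower bounds for $\del(E_\sigma\cap\{x_i=0\})$, which are uniform by Lemma \ref{lem:curvature}) and to the $O(\sigma^{-1})$ control on the shape of the slice itself. A secondary subtlety is that $E_\sigma$ is only \emph{approximately} sign-symmetric, since the functions $g_j$ in \eqref{eq:gj} are odd in $x_j$; one should check that this asymmetry, being $O(\sigma^{-1})$ in the radial direction, alters both the hyperplane count and the reflection bookkeeping by only $O(\sigma^{p-2})$, and therefore does not affect the coefficient of $\sigma^{p-1}$ in \eqref{eq:overcounted}.
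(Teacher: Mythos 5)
Your proposal follows the same route as the paper's proof: both reduce $R_\tau$ to a Gauss-type lattice count on the coordinate slices $\sigma\bigl(E_\sigma\cap\{x_i=0\}\bigr)$, evaluate the resulting $(p-1)$-dimensional volumes via the limiting ellipsoid $\bigl\{\sum_{j\ne i}(\pi x_j/(2a_j\sqrt q))^2<1\bigr\}$, and sum over $i\in\tau_1$. The only cosmetic difference is that you count $\Z^{p-1}$ points on the \emph{full} slice and then apply a $1/2$ symmetrization weight, whereas the paper directly counts $\sigma^{-1}\N^{p-1}$ points in the slice; these are equivalent up to elementary factors of $2$. Your emphasis on uniformity (in $\sigma$) of the Gauss remainder, justified through Lemma~\ref{lem:curvature}, correctly identifies the substantive point.

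One caveat concerns your last paragraph. The claim that the sign-asymmetry of $E_\sigma$ ``alters both the hyperplane count and the reflection bookkeeping by only $O(\sigma^{p-2})$'' is not accurate as stated: because the functions $g_j$ of \eqref{eq:gj} are odd in $x_j$, the boundary of $E_\sigma$ shifts by $\pm O(\sigma^{-1})$ in the radial direction as one crosses into a different sign sector, and the resulting discrepancy $2^p|\N^p\cap\sigma E_\sigma|-|\Z^p\cap\sigma E_\sigma|$ acquires an $O(\sigma^{p-1})$ contribution that does \emph{not} cancel when summed over reflections. This is the same order as the leading term of \eqref{eq:overcounted}. In the paper this contribution is implicitly transferred to the $\sigma^{-1}$ term in Lemma~\ref{lem:volumeesigma}, where the ``by symmetry'' step in the volume computation amounts to silently replacing $E_\sigma$ by its symmetrization; with the symmetrized set the reflection bookkeeping is exact, and only then does the residual asymmetry on the slices $\{x_i=0\}$ (where $g_i=0$) produce an error of the claimed smaller order. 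So the resolution you flag as ``one should check'' is real and requires the compensating observation about which volume is actually being fed into the Poisson summation — a point the paper itself passes over. Apart from this, the computation and the bookkeeping of the constant are the ones the paper carries out.
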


\begin{proof}
One can observe that $R_{\tau}$ is given by
\begin{equation*}
R_{\tau}(\sigma) =  \frac 1 2 \sum_{i \in \tau_1} \#\set{\sigma^{-1}\N^{p-1} \cap E_\sigma \cap \set{x_i = 0}}
\end{equation*}
Since $E_\sigma$ is convex, rough lattice point counting estimates due to Gauss tell us that
\begin{equation*}
 R_{\tau}(\sigma) = \frac 1 2 \sigma^{p-1} \sum_{i \in \tau_1}\Vol_{p-1}\left(E_\sigma \cap \set{x_i = 0}\right) + \bigo{\sigma^{p-2}}.
\end{equation*}
Computing the volumes in the same way as in the proof of the previous lemma yields the desired result.
\end{proof}

\subsection{Proof of Proposition \ref{Npsigma}.}
\label{Npsigma:proof}
Recall that $\tilde N_p$ is given by
\begin{equation*}
 \tilde N_p(\sigma) = \sum_{\tau \in \CT_p} \tilde N^\tau(\sigma).
\end{equation*}
Observe that 
\begin{equation*}
 \sum_{\tau \in \CT_p} 2^{p+q} \prod_{j \in \tau_1} a_j = \Vol_{p}(\del^q(\Omega))
\end{equation*}
and 
\begin{equation}\label{eq:p-1volume}
\begin{aligned}
 \sum_{\tau \in \CT_p} \sum_{j \in \tau_1} \prod_{i \ne j}2^{p+q} a_i &= (q+1)2^{p+q} \sum_{\tau \in \CT_{p-1}} \prod_{j \in \tau_1} a_j \\
 &= (q+1) \Vol_{p-1}(\del^{q+1}\Omega)).
 \end{aligned}
\end{equation}
Combining these two formulas with equations \eqref{eq:recall}, \eqref{eq:psferror} and Lemmas \ref{lem:volumeesigma}, \ref{lem:overcounted}, yields
\begin{equation*}
 \tilde N_p(\sigma) = \frac{\sqrt{q^p}}{(2\pi)^p}\omega_p\Vol_{p}(\del^q(\Omega))\sigma^p + c_p \Vol_{p-1}(\del^{q+1}\Omega)\sigma^{p-1} + \bigo{\sigma^{p - 2 + \frac{2}{p+1}}}.
\end{equation*}
Using equation \eqref{eq:p-1volume}, we have that $c_p = c_p' + c_p''$, where
\[
 c_p' = - \frac{(q+1) \sqrt{q^p}G_{p,q}}{\pi^p}
\]
comes from the second term in equation \eqref{eq:volumeesigma} and
\[
 c_p'' = - \frac{(q+1) \sqrt{q^p} \omega_{p-1}}{4(2\pi)^{p-1}}
\]
is obtained from the principal term in equation \eqref{eq:overcounted}.

We then have from equation \eqref{eq:Ntilde} that
\begin{equation*}
 N_p(\sigma) = \frac{\sqrt{q^p}}{(2\pi)^p}\omega_p\Vol_{p}(\del^q(\Omega))\sigma^p + c_p \Vol_{p-1}(\del^{q+1}\Omega)\sigma^{p-1} + \bigo{\sigma^{\eta_p}},
\end{equation*}
where
 \begin{equation*}
  \begin{aligned}
  \eta_p &= \max\left(p-1-1/p, p - 2 + \frac{2}{p+1}\right) \\
  &= \begin{cases}
      2/3 & \text{if } p=2, \\
      p-1-1/p & \text{otherwise}.
     \end{cases}
 \end{aligned}
 \end{equation*}
This completes the proof of Proposition \ref{Npsigma}. 
\subsection{Proof of Theorem \ref{thm:main}.}
Recall now that
\begin{equation*}
 N(\sigma) = \sum_{p = 1}^{d-1} N_p(\sigma) + \bigo 1.
\end{equation*}
Hence, applying the previous results we get
\begin{equation*}
\begin{aligned}
 N(\sigma) &= N_{d-1}(\sigma) + N_{d-2}(\sigma) + \bigo{\sigma^{\eta_{d-1}}}\\
 &= \frac{1}{(2\pi)^{d-1}}\omega_{d-1}\Vol_{d-1}(\del(\Omega))\sigma^{d-1} + c_{d-1} \Vol_{d-2}(\del^{2}\Omega)\sigma^{d-2}\\
 &\qquad + \frac{2^{\frac{d-2}2}}{(2\pi)^{d-2}}\omega_{d-2}\Vol_{{d-2}}(\del^2(\Omega))\sigma^{d-2} + \bigo{\sigma^{\eta_{d-1}}} \\
 &= C_1 \Vol_{d-1}(\del \Omega) \sigma^{d-1} + C_2 \Vol_{d-2}(\del^2\Omega) \sigma^{d-2} + \bigo{\eta_{d-1}}.
\end{aligned}
 \end{equation*}
We can write explicitly $C_2 = c_{d-1}' + c_{d-1''} + \frac{2^{\frac{d-2}2} \omega_{d-2}}{(2\pi)^{d-2}}$ to get indeed that
\begin{equation*}
C_2 = \frac{2^{\frac{d-2}2}\omega_{d-2}}{(2\pi)^{d-2}} -  \frac{2 G_{d-1,1}}{\pi^{d-1}} -  \frac{ \omega_{d-2}}{2(2\pi)^{d-2}}
\end{equation*}
when $d \ge 3$ and that
\begin{equation*}
\begin{aligned}
 N(\sigma) = \frac{\omega_1}{2\pi}\Vol_1(\del\Omega) \sigma + \bigo{1}
\end{aligned}
 \end{equation*}
when $d = 2$.

We can now give explicit expressions for the constants $G_{p,q}$:
\begin{equation*}
 \begin{aligned}
  G_{p,q} &= \int_0^{\pi/2} \dotso \int_0^{\pi/2} \arccot\left(\frac{1}{\sqrt q}\left[1 + \sum_{j=1}^{p-1}\cot^2 \theta_j \prod_{i>j} \csc^2 \theta_i\right]^{1/2}\right) \prod_{k=1}^{p-1} \sin^k(\theta_k) \de \theta_1 \dotso \de \theta_{p-1} \\
  &= \int_0^{\pi/2} \dotso \int_0^{\pi/2} \arccot\left(\frac{1}{\sqrt q}\prod_{j=1}^{p-1} \csc \theta_j \right) \prod_{k=1}^{p-1} \sin^k(\theta_k) \de \theta_1 \dotso \de \theta_{p-1}.
 \end{aligned}
\end{equation*}
In particular, calculating the integrals for $q = 1, p = 2$ and $q = 1, p = 3$, we get: 
\begin{equation*}
 \begin{aligned}
  G_{2,1} &= \frac{1}{2}\left(-1 +\sqrt 2\right) \pi\\
  G_{3,1} &= \frac{1}{8}\left(- 2 + \pi\right) \pi
 \end{aligned}
\end{equation*}

This concludes the proof of Theorem \ref{thm:main}.

\section{Further results}
\label{other}
\subsection{Concentration of eigenfunctions}
\label{concent}
In this section, we discuss the behaviour of the eigenfunctions, more
precisely how they scar on the lower-dimensional  facets of a
cuboid. This is made precise in the following theorem, where we will slightly abuse notation and denote by $u_k$ both a Steklov eigenfunction and its boundary trace.

\begin{thm}
  \label{thm:concent}
  Let $\Omega\subset\R^d$ be the cuboid with parameters
  $a_1,\dotsc,a_d>0$. Let $p\in \set{1,\dotsc,d-1}$ and let $\tau \in \CT_p$.
  Consider the set
  $$X_\tau = \{x=(\bx_{\tau_1},\bx_{\tau_2})\in\del\Omega\,:\,x_j = \pm a_j
  \mbox{ for } j \in \tau_2\}.$$
  Then, there exists a sequence of $L^2(\del\Omega)$-normalised
eigenfunctions $\set{u_k}$ concentrating  on $X_\tau$ and
getting equidistributed  around $X_\tau$ in the following sense: for each measurable
$U\subset X_\tau$ and every $\epsilon > 0$, consider the set 
\[
U_\epsilon = \{\bx = (\bx_{\tau_1},\bx_{\tau_2}) \in \del \Omega :
\bx_{\tau_1} \in U \mbox{ and } \operatorname{dist}(\bx,U) < \epsilon\}.
\]
Then, for every $\epsilon > 0$,
\[
\lim_{k \to \infty} \int_{U_\epsilon} |u_k(\bx)|^2 dx =  \frac{\Vol_{p} (U)}{\Vol_p(X_\tau)}.
\]
\end{thm}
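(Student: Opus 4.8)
The plan is to produce the required sequence explicitly from the separated eigenfunctions of Theorem~\ref{thm:eigenlattice} and then to show that their $L^2(\del\Omega)$-mass escapes onto $X_\tau$ and equidistributes there. Concretely, I would fix $\tau=(\tau_1,\tau_2)\in\CT_p$, fix a choice of hyperbolic types (say $\Hyp_k=\cosh$ for each $k\in\tau_2$ — the choice is irrelevant), and for each $\bn\in\N^p$ with $\min_i n_i$ large take the separated eigenfunction $u_\bn=\prod_{i\in\tau_1}\Trig_i(\alpha_i x_i)\prod_{k\in\tau_2}\cosh(\beta_k x_k)$ supplied by Theorem~\ref{thm:eigenlattice}(iii). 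Along any sequence $\bn^{(m)}$ with $\min_i n_i^{(m)}\to\infty$ one has $\alpha_i\to\infty$ for all $i\in\tau_1$ and $\beta_k\to\infty$ for all $k\in\tau_2$ (with $\beta_k/\beta_{k'}\to1$, from the compatibility equations); this is the candidate sequence $\{u_m\}$. Everything else is asymptotic analysis of these products, resting on two elementary one-dimensional facts. \emph{(a) Trigonometric equidistribution:} as all $\alpha_i\to\infty$, $\prod_{i\in\tau_1}\Trig_i^2(\alpha_i x_i)$ converges weak-$*$ in $L^\infty\bigl(\prod_{i\in\tau_1}[-a_i,a_i]\bigr)$ to the constant $2^{-p}$ — proved by writing $\Trig^2=\tfrac12(1\mp\cos)$, expanding the product, and applying the multidimensional Riemann--Lebesgue lemma (no resonance survives since every frequency tends to infinity), so the convergence holds when tested against the indicator of any measurable set. \emph{(b) Hyperbolic endpoint concentration:} for $\Hyp\in\{\sinh,\cosh\}$, $\int_{-a}^a\Hyp^2(\beta t)\,\de t\sim\tfrac14\beta^{-1}e^{2\beta a}$, $\Hyp^2(\pm\beta a)\sim\tfrac14 e^{2\beta a}\sim\beta\int_{-a}^a\Hyp^2(\beta t)\,\de t$, and for each $\varepsilon>0$, $\int_{a-\varepsilon}^a\Hyp^2(\beta t)\,\de t\big/\int_{-a}^a\Hyp^2(\beta t)\,\de t\to\tfrac12$ (and likewise near $-a$); all of this is immediate from $\Hyp^2(\beta t)\asymp e^{2\beta|t|}$.

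Next I would compute the norm. Writing $I_j=\int_{-a_j}^{a_j}|u_j|^2$, $P=\prod_{j=1}^d I_j$, and splitting $\del\Omega$ into its $2d$ facets via $\int_{\{x_m=\pm a_m\}}|u_m|^2\,\de S=|u_m(\pm a_m)|^2\,P/I_m$, fact (b) shows that each facet $\{x_k=\pm a_k\}$ with $k\in\tau_2$ contributes $2\beta_k P(1+\smallo1)$, while the facets $\{x_i=\pm a_i\}$ with $i\in\tau_1$ contribute only $\bigo P$. Hence $\|u_m\|_{L^2(\del\Omega)}^2=2\bigl(\sum_{k\in\tau_2}\beta_k\bigr)P(1+\smallo1)$; in particular all the mass escapes onto the codimension-$q$ faces $x_k=\pm a_k$ ($k\in\tau_2$), i.e.\ onto $X_\tau$.

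Then comes the local computation. Fix a measurable $U\subset X_\tau$ and $\varepsilon<\min_{k\in\tau_2}a_k$. By additivity over the $2^q$ $p$-facets composing $X_\tau$ I would reduce to $U$ contained in a single facet $\{x_k=s_k a_k:k\in\tau_2\}$ with $\tau_1$-projection $\tilde U$, so that $\Vol_p(\tilde U)=\Vol_p(U)$. For this range of $\varepsilon$, $U_\varepsilon$ meets only the facets $\{x_{k_0}=s_{k_0}a_{k_0}\}$, $k_0\in\tau_2$ (the $\tau_1$-facets carrying $\smallo(\|u_m\|^2)$ by the previous step), and on the $k_0$-th one it equals $\set{\bx_{\tau_1}\in\tilde U,\ |x_k-s_k a_k|<\varepsilon\ \forall k\in\tau_2\setminus\{k_0\}}$. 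Expanding $|u_m|^2$ as a product and invoking (a) on the $\tau_1$-coordinates and (b) on the $\tau_2$-coordinates (the fixed factor $\Hyp_{k_0}^2(\beta_{k_0}a_{k_0})\sim\beta_{k_0}I_{k_0}$, the $q-1$ endpoint factors each contributing $\tfrac12 I_k$, and $\int_{\tilde U}\prod_{i\in\tau_1}\Trig_i^2\to 2^{-p}\Vol_p(\tilde U)$) yields
\[
 \int_{U_\varepsilon\cap\{x_{k_0}=s_{k_0}a_{k_0}\}}|u_m|^2\,\de S = \frac{\beta_{k_0}\,\Vol_p(\tilde U)}{2^{p+q-1}\prod_{i\in\tau_1}a_i}\,P\,(1+\smallo1).
\]
Summing over $k_0\in\tau_2$ and dividing by $\|u_m\|_{L^2(\del\Omega)}^2$, the factor $\sum_{k\in\tau_2}\beta_k$ cancels and one is left with $\Vol_p(\tilde U)\big/\bigl(2^{p+q}\prod_{i\in\tau_1}a_i\bigr)=\Vol_p(U)/\Vol_p(X_\tau)$, using $\Vol_p(X_\tau)=2^d\prod_{i\in\tau_1}a_i$ — independently of $\varepsilon$. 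General measurable $U$ then follows by inner/outer approximation with finite unions of boxes, using (a) in its weak-$*$ form; equivalently, the whole argument amounts to $|u_m|^2\,\de S$ converging weak-$*$ on $\del\Omega$ to the normalised $p$-dimensional volume measure on $X_\tau$, which can also be verified by testing directly against tensor-product continuous functions.

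The analytic inputs (a), (b) are routine. I expect the main obstacle to be the bookkeeping in the local computation: identifying exactly the portion of $\del\Omega$ swept out by $U_\varepsilon$, checking that all $\tau_1$-facets (and the wrong-sign $\tau_2$-facets) are negligible after normalisation, and verifying that each of the $q$ relevant facets contributes precisely a $2^{-(q-1)}$ fraction, so that — weighed against the $2\sum_{k\in\tau_2}\beta_k$ in the normalising denominator — the $\varepsilon$-dependence and all the $\beta_k$'s cancel and the answer collapses to the clean ratio $\Vol_p(U)/\Vol_p(X_\tau)$.
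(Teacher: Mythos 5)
Your proposal is correct and follows essentially the same route as the paper: take separated eigenfunctions with all frequencies $\alpha_i,\beta_k\to\infty$, use Riemann--Lebesgue to equidistribute the trigonometric factors over the $\tau_1$-coordinates, and use the exponential endpoint concentration of $\cosh^2(\beta t)$ to localise the hyperbolic factors. (The paper fixes the diagonal sequence $\bn=(k,\dotsc,k)$ with $\Trig_i=\sin$ and $\Hyp_j=\cosh$; your version allows any sequence with all $n_i\to\infty$ and any trigonometric types, which is a harmless generalisation.)

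Where you genuinely do better than the paper is in the bookkeeping of the boundary integrals, and this is worth keeping exactly as you have it. You split $\|u\|^2_{L^2(\del\Omega)}$ over the $2d$ facets via $\int_{\{x_m=\pm a_m\}}|u|^2\,\de S=|u_m(\pm a_m)|^2\prod_{j\neq m}I_j$, observe that each pair of $\tau_2$-facets $\{x_k=\pm a_k\}$ contributes $\sim 2\beta_k P$ while the $\tau_1$-facets contribute only $O(P)$, and split $\int_{U_\epsilon}|u|^2\,\de S$ the same way, so that the factor $\sum_{k\in\tau_2}\beta_k$ appears in both numerator and denominator and cancels visibly. The paper's formula for $\|v_k\|^2_{L^2(\del\Omega)}$, read literally, is the $d$-fold product $\frac{1}{2^p}\prod_{i\in\tau_1}\int(1-\cos)\cdot\prod_{j\in\tau_2}\int\cosh^2$, which is the $d$-dimensional integral $\int_\Omega v_k^2\,\de\bx$ rather than a surface integral, and the same happens in its expression for $\int_{U_\epsilon}v_k^2$; the two mis-identifications are off by the same factor (proportional to $\sum_{k\in\tau_2}\beta_k$), so the final ratio $\Vol_p(U)/\Vol_p(X_\tau)$ is unaffected, but the facet-by-facet accounting you carry out is the argument that actually justifies the $L^2(\del\Omega)$-normalisation.

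One small point to make explicit when writing this up: after reducing to $U$ inside a single component of $X_\tau$, the set $U_\epsilon$ also intersects $\tau_1$-facets (and, if $\epsilon$ is not small, opposite-sign $\tau_2$-facets). You correctly dispose of these via the norm computation — the total mass on all $\tau_1$-facets is $O(P)=\smallo\bigl(\|u\|^2_{L^2(\del\Omega)}\bigr)$ — but note that the cruder bound by $\sup|u|^2\cdot\operatorname{meas}(U_\epsilon\cap\text{facet})$ would not suffice here for $q\ge 2$; it is the integrated bound you state, carrying the $\prod I_j$ rather than the $\prod\cosh^2(\beta_j a_j)$, that closes the argument.
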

For example, on a cuboid of dimension 3, the set $X_\tau$ is a union
of four parallel edges in case  $p = 1$, while for $p = 2$
it is a union of two opposite faces.

\begin{proof}
 Without loss of generality, we will suppose that $U$ is a subset of one of the connected components of $X_\tau$, say the one where $x_j = a_j$ for all $j \in \tau_2$. For $k \in \N$, let $\bk = (k,\dotsc,k)\in\R^p$ and consider the pair $(\balpha^{(\bk)},\bbeta^{(\bk)})$ satisfying the compatibility and harmonicity conditions
 \begin{equation*}
 \begin{aligned}
 \alpha_i^{(k)}\cot \left(\alpha_i^{(k)}a_i\right) &= \beta_j^{(k)} \tanh\left(\beta_j^{(k)} a_j\right) \qquad\forall i \in \tau_1, j \in \tau_2 \\
  \sum_{i \in \tau_1} \left(\alpha_i^{(k)}\right)^2 &= \sum_{j \in \tau_2} \left(\beta_j^{(k)}\right)^2
\end{aligned}
  \end{equation*}
with $\balpha^{(\bk)} \in I_{2\bk}$. Note that this corresponds to choosing $\ell(i) = 0$ for all $i \in \tau_1$ and $\ell(j) = 1$ for all $j \in \tau_2$. Since
\begin{equation*}
\begin{aligned}
  \left(\sum_{i \in \tau_1} \left(\alpha_i^{(k)}\right)^2\right)^{1/2} &= k
  \overbrace{\left(\sum_{i \in \tau_1} \left(\frac{\pi}{2a_i}\right)^2\right)^{1/2}}^{A} + \bigo 1 
= A k + \bigo 1
\end{aligned}
\end{equation*}
we have that for all $j \in \tau_2$,
\begin{equation*}
\beta_j^{(k)} = \frac{A}{\sqrt q} k + \bigo 1.
\end{equation*} 
Let $v_k(\bx)$ be the associated eigenfunction, and observe that
\begin{equation*}
\begin{aligned}
 v_k(\bx)^2 &= \prod_{i \in \tau_1} \sin^2\left(\alpha_i^{(k)} x_i\right) \prod_{j \in \tau_2} \cosh^2\left(\beta_j^{(k)} x_j\right) \\
  &= \frac{1}{2^p} \prod_{i \in \tau_1}\left( 1 - \cos\left(2\alpha_i^{(k)} x_i\right)\right) \prod_{j \in \tau_2} \cosh^2\left(\beta_j^{(k)} x_j\right), \\
  &= \frac{1}{2^p} \prod_{i \in \tau_1}\left( 1 - \cos\left(\left(\frac{\pi k }{ a_i} + \bigo 1\right)x_i\right)\right) \prod_{j \in \tau_2} \cosh^2\left(\left(\frac{A}{\sqrt q} k + \bigo 1\right) x_j\right).
 \end{aligned}
\end{equation*}
Defining the normalised eigenfunction
\begin{equation*}
 u_k = \frac{v_k}{\| v_k \|_{L^2(\del \Omega)}},
\end{equation*}
we estimate both $\|v_k\|^2 := \| v_k \|^2_{L^2(\del \Omega)}$ and $\int_{U_\epsilon} v_k(x)^2 \de x$. For $\|v_k\|^2$, we have that
\begin{equation} \label{eq:riemleb}
\begin{aligned}
 \|v_k\|^2 &= \frac{1}{2^p}\prod_{i \in \tau_1}\int_{-a_i}^{a_i}1 - \cos\left(\left(\frac{\pi k }{ a_i} + \bigo 1\right) x_i \right) \de x_i \prod_{j \in \tau_2} \int_{-a_j}^{a_j} \cosh^2\left( \beta_j x_j\right) \de x_j \\
 &= \frac{1}{2^{d}}\left(\Vol_{p}(X_\tau) + \smallo 1\right)\prod_{j \in \tau_2} \int_{-a_j}^{a_j} \cosh^2\left( \beta_j x_j\right) \de x_j
\end{aligned}
 \end{equation}
from the Riemann-Lebesgue lemma and the fact that
\begin{equation*}
 \Vol(X_\tau) = 2^q \prod_{i \in \tau_1}\int_{-a_i}^{a_i} \de x_i.
\end{equation*}
Furthermore, for all $j \in \tau_2$ we have that
\begin{equation} \label{eq:intcoshfull}
 \begin{aligned}
  \int_{-a_j}^{a_j} \cosh^2\left( \beta_j x_j\right) \de x_j &= 
  \frac 1 4 \int_{-a_j}^{a_j} e^{2\left(\frac{A}{\sqrt q} k + \bigo 1\right) x_j} + e^{-2\left(\frac{A}{\sqrt q} k + \bigo 1\right) x_j} + 2 \de x_j \\
  &= \frac{\sqrt q}{4 A k} e^{2\frac{A}{\sqrt q} k a_j}\left(1 + \smallo 1\right).
 \end{aligned}
\end{equation}
Setting $C = \frac{\sqrt q}{4A}$, equations \eqref{eq:riemleb} and \eqref{eq:intcoshfull} yield together that
\begin{equation} \label{eq:normvk}
 \|v_k\|^2 = \frac{C^q}{2^{d}k^q}\Vol_{p}(X_\tau) \left(\prod_{j \in \tau_2}  e^{2\frac{A}{\sqrt q} k a_j}\right) \left(1 + \smallo 1 \right)
\end{equation}

 We now also compute the integral of $v_k^2$ on $U_\epsilon$ where we get, in a similar fashion to \eqref{eq:riemleb} that
 \begin{equation} \label{eq:intUeps}
  \int_{U_\epsilon} v_k(x)^2 \de x = \frac{1}{2^p}\left(\Vol_{p}(U) + \smallo 1\right)\prod_{j \in \tau_2} \int_{a_j - \epsilon}^{a_j} \cosh^2\left( \beta_j x_j\right) \de x_j
 \end{equation}
We also have that
\begin{equation} \label{eq:intcoshsmall}
\int_{a_j - \epsilon}^{a_j} \cosh^2\left( \beta_j x_j\right) \de x_j =\frac{C}{2} e^{2\frac{A}{\sqrt q} k a_j}\left(1 + \smallo 1\right),
\end{equation}
where once again $C = \frac{\sqrt q}{4A}$. Together, equations \eqref{eq:intUeps} and \eqref{eq:intcoshsmall} yield
\begin{equation} \label{eq:intUeps2}
 \int_{U_\epsilon} v_k(\bx) \de \bx = \frac{C^q}{2^{d}k^q}\Vol_{p}(U) \left(\prod_{j \in \tau_2}  e^{2\frac{A}{\sqrt q} k a_j}\right) \left(1 + \smallo 1 \right).
\end{equation}

Finally, putting equations \eqref{eq:normvk} and \eqref{eq:intUeps2} together yields indeed that
\begin{equation*}
\lim_{k \to \infty} \int_{U_\epsilon} u_k(x)^2 \de x = \lim_{k \to \infty} \int_{U_\epsilon} \frac{v_k(x)^2}{\|v_k\|^2} \de x = \frac{\Vol_p(U)}{\Vol_p(X_\tau)},
\end{equation*}
concluding the proof.

 \end{proof}
\subsection{The first eigenfunction}

In this section, we investigate the lowest nonzero eigenvalue
$\sigma_1$ on the cuboid. Let us first find the form of an
eigenfunction $u$ associated with $\sigma_1$. By Courant's
nodal theorem $u$ has
exactly $2$ nodal domains. Thus, one of the factors $u_j$ will
have $2$ nodal domains on the interval $[-a_j,a_j]$ and all the
other factors only one nodal domain. In other words there is one odd factor, and all the others are positive even functions. We show the following proposition.

\begin{prop}\label{prop:firsteigen}
 Suppose that $a_1 \le \dotso \le a_d$. Then there is $\bbeta = (\beta_1,\dotsc,\beta_{d-1})$ and $\alpha_d = |\bbeta| < \frac{\pi}{2a_d}$ such that
 \[
  u(x_1,\dotsc,x_d) = \sin(\alpha_d x_d) \prod_{k = 1}^{d-1} \cosh(\beta_k x_k)
 \]
is an eigenfunction with eigenvalue $\sigma_1$.

\end{prop}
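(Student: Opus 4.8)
The plan is to use Courant's nodal theorem together with the classification of eigenfunctions from Lemma~\ref{Lemma:oneD} and Theorem~\ref{thm:separation} to reduce the problem to a finite set of cases, and then to identify the minimal one. As noted before the proposition, the first eigenfunction $u$ has exactly two nodal domains, so exactly one of its separated factors $u_j$ is an odd-type (sign-changing) function and all the others are even and positive. Since $\sigma_1>0$, no linear factor can occur (those would force $\sigma=a_j^{-1}$, which we can argue is not the minimum, or at least handle separately). Thus the sign-changing factor is either $\sin(\alpha_j x_j)$ with $\sigma_1=\alpha_j\cot(\alpha_j a_j)$ and $\alpha_j\in(0,\pi/(2a_j))$, or $\sinh(\beta_j x_j)$ with $\sigma_1=\beta_j\coth(\beta_j a_j)$. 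The positive even factors are either $\cos(\alpha_k x_k)$ — but on the principal branch $\cos(\alpha_k x_k)$ is positive only for $\alpha_k<\pi/(2a_k)$, in which case $\sigma_1=-\alpha_k\tan(\alpha_k a_k)<0$, contradicting $\sigma_1>0$ — or $\cosh(\beta_k x_k)$, which is always positive. Hence every even factor must be of the form $\cosh(\beta_k x_k)$.

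So $u$ has the shape $\prod_{k\in\tau_2}\cosh(\beta_k x_k)$ times one sign-changing factor in the remaining coordinate. If that factor were hyperbolic, $u=\prod_{k=1}^{d-1}\cosh(\beta_k x_k)\sinh(\beta_d x_d)$, then by the harmonicity condition $\sum_{k=1}^{d-1}\beta_k^2=\beta_d^2$, and one checks this leads to $\sigma=\beta_d\coth(\beta_d a_d)>\beta_d>\beta_d\tanh(\beta_d a_d)$ etc.; I would compare this with the competitor having a trigonometric sign-changing factor and show the latter is strictly smaller. Concretely, among admissible configurations the eigenvalue is governed by the common value $\sigma$ of the compatibility functions, and the key monotonicity facts are: $T_{a,0}(\alpha)=\alpha\cot(\alpha a)$ decreases from $a^{-1}$ to $0$ as $\alpha$ runs over $(0,\pi/(2a))$, while $H_{a,1}(\beta)=\beta\tanh(\beta a)$ increases from $0$ to $+\infty$. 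Intersecting the hyperbolic curve $C_H$ (given by $\beta_k\tanh(\beta_k a_k)$ all equal) with the locus $T_{a_d,0}(\alpha_d)=$ that common value, subject to $\alpha_d^2=\sum\beta_k^2$, produces, by the argument already used in the proof of Theorem~\ref{thm:eigenlattice}, a unique solution; this is the candidate for $\sigma_1$. The ordering hypothesis $a_1\le\dots\le a_d$ is what forces the sign-changing trigonometric factor to sit in the \emph{last} coordinate: if it sat in coordinate $j<d$, one could swap roles and obtain a strictly smaller eigenvalue, because enlarging the interval on which the $\sin$-factor lives (i.e.\ increasing $a_j$) decreases $\alpha\cot(\alpha a)$ for the relevant value, and symmetrically the $\cosh$-factors contribute less.

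To finish, I would invoke the variational characterization $\sigma_1=\min\{R(v): v\perp 1\}$, where $R$ is the Steklov Rayleigh quotient, to confirm that the configuration just identified indeed realizes the minimum rather than merely being the smallest \emph{among separated} eigenfunctions — but this is automatic from Lemma~\ref{Lemma:separated}, which guarantees completeness of the separated system, so $\sigma_1$ is attained by one of the configurations enumerated above, and the enumeration shows that configuration is the stated one. The bound $\alpha_d=|\bbeta|<\pi/(2a_d)$ then follows because $\sigma_1=\alpha_d\cot(\alpha_d a_d)$ must be positive, which on the principal branch forces $\alpha_d a_d<\pi/2$; and $\alpha_d=|\bbeta|$ is exactly the harmonicity condition $\alpha_d^2=\sum_{k=1}^{d-1}\beta_k^2$.

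\textbf{Main obstacle.} The routine part is the sign analysis eliminating $\cos$ and linear factors; the substantive step is \emph{comparing} the two surviving configurations — "one $\sin$, rest $\cosh$" versus "one $\sinh$, rest $\cosh$" — and, within the first, pinning the $\sin$-factor to the largest side $a_d$. This requires a careful monotonicity argument tracking how the unique intersection point of the curves $C_T$ and $C_H$ (and hence the common value $\sigma$) depends on which coordinate carries the trigonometric factor and on the side lengths $a_j$. I expect the cleanest route is to exploit strict monotonicity of $\alpha\mapsto\alpha\cot(\alpha a)$ in both $\alpha$ and $a$, rather than to compute anything explicitly.
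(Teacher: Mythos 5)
Your structure matches the paper's: Courant's nodal theorem gives exactly one sign-changing factor, you eliminate $\cos$ and linear factors, and you aim to pin the $\sin$ factor to the largest side by monotonicity. Your $\cos$ elimination is actually a bit cleaner than the paper's (which runs through sign cases of $\sin(\alpha a_j)$), and you correctly use Lemma~\ref{Lemma:separated} to justify that the minimum over separated configurations is the true $\sigma_1$.

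However, your treatment of the $\sinh$ case contains a genuine error. For $u=\prod_{k=1}^{d-1}\cosh(\beta_k x_k)\sinh(\beta_d x_d)$ you write the harmonicity condition as $\sum_{k=1}^{d-1}\beta_k^2=\beta_d^2$, but both $\cosh$ and $\sinh$ factors satisfy $u_j''+\lambda_j u_j=0$ with $\lambda_j=-\beta_j^2<0$. So \eqref{eq:conditiondesomme} forces $\sum_{j=1}^d\beta_j^2=0$, which has no nontrivial solution; the all-hyperbolic configuration is impossible outright, and no eigenvalue comparison is needed. Your proposed comparison of $\beta_d\coth$ vs.\ $\beta_d\tanh$ is building on an equation that cannot hold, so that branch of the argument must be replaced.

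The substantive remaining step — showing the $\sin$ factor must sit at the largest $a_j$ — you correctly identify but leave as a sketch. The paper's argument is a two-sided squeeze by contradiction: if the $\sin$ sat at $j<d$ with eigenvalue strictly smaller, then from $\gamma_k\tanh(\gamma_k a_k)<\beta_k\tanh(\beta_k a_k)$ for $k\neq j,d$ one gets $\gamma_k<\beta_k$; from $|\bgamma|\cot(|\bgamma|a_j)<|\bbeta|\cot(|\bbeta|a_d)$ with $a_j\le a_d$ one gets $|\bgamma|>|\bbeta|$; together these force $\gamma_d>\beta_j$, but then $\gamma_d\tanh(\gamma_d a_d)<\beta_j\tanh(\beta_j a_j)$ with $a_d>a_j$ forces $\gamma_d<\beta_j$ — a contradiction. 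Your instinct to rely on strict monotonicity of $\alpha\mapsto\alpha\cot(\alpha a)$ and $\beta\mapsto\beta\tanh(\beta a)$ is right, but you would need to combine both directions of the squeeze to actually close the argument.
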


\begin{proof}
We will fist show that $u$ is a product of one sine factor and $d-1$ hyperbolic cosine factors. Suppose that one of the trigonometric factors was a cosine. Let us study the number of nodal domains of $\cos(\alpha x_j)$ on the interval $[-a_j,a_j]$. By the Steklov boundary condition we have that
\begin{equation*}
 \cos(\alpha a_j) = - \sigma \alpha \sin(\alpha a_j),
\end{equation*}
 There are three possible cases, whether $\sin(\alpha a_j)$ is equal to, greater than or smaller than $0$. Since the eigenvalue $\sigma_0 = 0$ is simple, if $\sin(\alpha a_j) = 0$ it would imply that $\cos(\alpha a_j) = 0$, which is impossible. 
 
 If $\sin(\alpha_j a_j) > 0$, we have that $\cos(\alpha a_j)$ is negative. This would imply that the function $\cos(\alpha x)$ has changed sign on $[0,a_j]$ and since it is even it will have at least two zeroes on $[-a_j, a_j]$, that is at least three nodal domains, in contradiction with Courant's nodal theorem. 
 
 Finally, if $\sin(\alpha a_j) < 0$, this implies that $\alpha a_j > \pi$, meaning that $\cos(\alpha x_j)$ has changed sign at least once on $[0,a_j]$. This implies once again that there are at least three nodal domains, completing the proof that no factor is cosine.
 
 Since there can only be one odd factor, if one is linear all the other factors are a combination of cosine and hyperbolic cosine. We just proved that none of the factors are cosine, and it is impossible for a product of linear functions with only hyperbolic cosines to respect the harmonicity condition \eqref{eq:conditiondesomme}. We therefore deduce that the only odd factor of $u$ is a sine, and by the above discussion all of the other factors are hyperbolic cosine. This implies that there exists some $1 \le j \le d$, $\alpha_j$ and $\beta_k$, $k \ne j$ such that
\begin{equation*}
 u(x_1,\dotsc,x_d) = \sin (\alpha_j x_j) \prod_{k \ne j} \cosh(\beta_k x_k),
\end{equation*}
and $\alpha_j a_j < \pi/2$. The compatibility equations \eqref{equation:compatibility} hence become
\begin{equation*}
 \begin{aligned}
 \alpha_j \cot(\alpha_j a_j) &= \beta_k \tanh(\beta_k a_k) \\
  \alpha_j^2 &= |\bbeta|^2 = \sum_{k \ne j} \beta_k^2,
 \end{aligned}
\end{equation*}
and $\sigma_1$ is any member of the first equality. We show that $\sigma_1$ is smallest when $a_j$ is the largest side, i.e. $a_j = a_d$. Suppose not. Then, there is $1 \le k \le d-1$ such that an eigenvalue associated with
\begin{equation*}
 v(x_1,\dotsc,x_d) = \sin (|\bgamma| x_j) \prod_{k \ne j} \cosh(\gamma_j x_j).
\end{equation*}
is smaller than the one associated with 
\begin{equation*}
 u(x_1,\dotsc,x_d) = \sin (|\bbeta| x_d) \prod_{k \ne d} \cosh(\beta_k x_k).
\end{equation*}

The compatibility equations imply that for all $k \ne j$ and $k \ne d$,
\begin{equation*}
 \gamma_k \tanh(\gamma_k a_k) < \beta_k \tanh (\beta_k a_k).
\end{equation*}
Since $x \tanh(ax)$ is an increasing function, we deduce that $\gamma_k \le \beta_k$ for all such $k$. However, we also have that
\begin{equation*}
 |\bgamma|\cot(|\bgamma| a_k) < |\bbeta|\cot(|\bbeta|a_d)
\end{equation*}
and since $x\cot(a x )$ is decreasing on its first period and $a_k \le a_d$, this implies that $|\bgamma|\krn>\krn|\bbeta|$. From this, we therefore have that
\begin{equation*}
 \beta_j^2 + \sum_{k \ne j,d} \beta_k^2 < \gamma_d^2 + \sum_{k \ne j,d} \gamma_k^2.
\end{equation*}
Since for all $k \ne j,d$ we have that $\gamma_k < \beta_k$, we therefore deduce that $\beta_j < \gamma_d$. However, once again using the compatibility conditions, we have that
\begin{equation*}
 \gamma_d \tanh(\gamma_d a_d) < \beta_j \tanh (\beta_j a_j).
\end{equation*}
Since $a_d > a_j$, by monotonicity of $x \tanh(a x)$ we deduce that $\gamma_d < \beta_j$, a contradiction. Hence, we have that the first eigenfunction is, taking into account that $\alpha_d = |\bbeta|$,
\begin{equation*}
 u(x_1,\dotsc,x_d) = \sin(|\bbeta| x_d) \prod_{j = 1}^{d-1} \cosh(\beta_j x_j),
\end{equation*}
\end{proof}
concluding the proof of the proposition.
\subsection{Proof of Theorem \ref{thm:isoperimetric}}
\label{proof:isoperim}

The first eigenvalue is given by the following min-max principle :
\begin{equation*}
 \sigma_1(\Omega) = \inf_{\substack{u \in C^\infty(\Omega) \\ \int_{\del \Omega} u = 0}} R_\Omega[u] = \inf_{\substack{u \in C^\infty(\Omega) \\ \int_{\del \Omega} u = 0}} \frac{\int_\Omega |\nabla u|^2}{\int_{\del \Omega} u^2}.
\end{equation*}
Denote by $\Omega_0$ the cube $[-1,1]^d$. Then, for any cuboid $\Omega = [-a_1,a_1]\times \dotso \times [-a_d,a_d]$ we have that
\begin{equation*}
 \int_\Omega f(x) \de x = \int_{\Omega_0} f\left(a_1 x_1,\dotsc,a_d x_d\right) \prod_{i = 1}^d a_i \de x
\end{equation*}
and
\begin{equation} \label{eq:rescaling}
\begin{aligned}
 \int_{\del \Omega} f(x) \de x &= \sum_{j = 1}^d \int_{\del \Omega \cap \set{x_j = \pm a_j}} f(x) \de x \\
 &= \sum_{j = 1}^d \int_{\del \Omega_0 \cap \set{x_j = \pm 1}} f\left(a_1 x_1,\dotsc,a_dx_d\right) \prod_{i \ne j} a_i \de x.
 \end{aligned}
\end{equation}
This allows us to consider integration only on $\Omega_0$ for $R_\Omega$. Observe that the eigenspace of $\sigma_1(\Omega_0)$ has dimension $d$, and that a basis for it is given by
\begin{equation*}
 u_j(x_1,\dotsc,x_d) = \sin(|\beta| x_d) \prod_{i \ne j} \cosh(\beta_i x_i).
\end{equation*}
The eigenfunctions $u_j$ are orthogonal to constants in the scalar product given by the rescaled integral \eqref{eq:rescaling}. Indeed, on all faces where the $\sin$ factor is not constant, the integral vanishes since it is an odd function. On the pair of faces where the $\sin$ factor is constant, we have that $u_j(x_1,\dotsc,a_j,\dotsc,x_d) = - u_j(x_1,\dotsc,-a_j,\dotsc,x_d)$ hence the integrals cancel out on these two faces.

Consider the eigenfunction
\begin{equation*}
 u = \sum_{j = 1}^d u_j.
\end{equation*}
It is easy to see that the integral of $u^2$ on any face of $\Omega_0$ is identical, and we have that $R_{\Omega_0}[u] = \sigma_1(\Omega_0)$. We now compute
\begin{equation*}
 \begin{aligned}
\frac{1}{R_{\Omega}[u]} &= \frac{\sum_{j = 1}^d \prod_{i \ne j} a_i \int_{\del \Omega_0 \cap \set{x_j = \pm 1}} u^2 \de x}{\prod_{j=1}^{d} a_j \int_{\Omega_0} |\nabla u|^2 \de x} \\
  &= \frac{1}{R_{\Omega_0}[u]}\frac{\frac 1 d\sum_{j = 1}^d \prod_{i \ne j} a_i }{\prod_{j =1 }^d a_j} .
 \end{aligned}
\end{equation*}
Fix the volume $\Vol_d(\Omega) = \Vol_d(\Omega_0)$, hence $\prod_{j} a_j = 1$. Then, from the inequality of arithmetic and geometric means, 
\begin{equation*}
\begin{aligned}
 \frac{R_{\Omega_0}[u]}{R_\Omega[u]} = \frac 1 d \sum_{j=1}^d \prod_{i \ne j} a_i 
 \ge \left(\prod_{j=1}^d a_j^{d-1}\right)^{1/d} 
 = 1,
\end{aligned}
 \end{equation*}
with equality if and only if for all $j,k$, $\prod_{i \ne j} a_i = \prod_{i \ne k} a_i$, which is true if and only if $a_j = a_k$ for all $j,k$, which implies in turn that $\sigma_1(\Omega) \le \sigma_1(\Omega_0)$, with equality if and only if $\Omega$ is a cube.

On the other hand, fix the area, $\Vol_{d-1}(\Omega) = \Vol_{d-1}(\Omega_0)$, hence $\sum_{j} \prod_{i \ne j} a_i = d$. Then,
\begin{equation*}
\begin{aligned}
 \frac{R_{\Omega_0}[u]}{R_\Omega[u]} = \left( \prod_{j} a_j\right)^{-1} = \left(\prod_{j=1}^d \prod_{i \ne j} a_i\right)^{\frac{d(1-d)}{d}} \ge \left(\frac{1}{d}\sum_{j=1}^d \prod_{i \ne j} a_i\right)^{\frac{1-d}{d}} = 1,
\end{aligned}
 \end{equation*}
with equality in the same case as before. Once again, this implies that $\sigma_1(\Omega) \le \sigma_1(\Omega_0)$, with equality if and only if $\Omega$ is a cube.

\subsection{Proof of Corollary \ref{cor:rect}} 
\label{section:spectraldetermination}

We want to show that
among all rectangles, the Steklov spectrum determines the lengths
$a_1,a_2$ of its sides. From spectral asymptotics, the perimeter of
the rectangle is obtained, giving $L = a_1 + a_2$, supposing without loss of generality that $a_1 \le
a_2$. On the other hand, we have $\sigma_1$, and we know that it is
the smallest root of 
\begin{equation*}
 \sigma_1 = \alpha \cot(\alpha a_1) = \alpha \tanh(\alpha a_2).
\end{equation*}
Rewriting these to yield $a_2$ as a function of $\alpha,$ $L$ and $\sigma_1$ gives
\begin{equation}\label{eq:curve1}
 a_2 = f(\alpha) = \frac{1}{\alpha} \arctanh\left(\frac{\sigma_1}{\alpha}\right)
\end{equation}
and
\begin{equation}\label{eq:curve2}
 a_2 = g(\alpha) =  L - \frac{1}{\alpha} \arccot\left(\frac{\sigma_1}{\alpha}\right).
\end{equation}
Given $\sigma_1$ and $L$, the intersection of these curves yield possible values $a_2$ for $\alpha$. We now show that they intersect at only one point. Equation \eqref{eq:curve1} is defined for $\alpha > \sigma_1$ and taking the derivative yields
\begin{equation}
 f'(\alpha) = -\frac{\arctanh\left(\frac{\sigma_1}{\alpha}\right)}{\alpha^2}-\frac{\sigma_1}{\alpha^3 \left(1-\frac{\sigma_1^2}{\alpha^2}\right)},
\end{equation}
which is always negative for $\alpha > \sigma_1$, hence $f$ is decreasing. We now show that $g$ is increasing on $[\sigma_1,\infty)$. We have that
\begin{equation*}
 g'(\alpha) = \frac{\arccot\left(\frac{\sigma_1 }{\alpha }\right)}{\alpha ^2}-\frac{\sigma_1 }{\alpha ^3 \left(1+\frac{\sigma_1^2}{\alpha ^2}\right)}.
\end{equation*}
Thus, $g'$ is positive if
\begin{equation*}
 \alpha \arccot\left(\frac{\sigma_1 }{\alpha }\right) \left(1+\frac{\sigma_1^2}{\alpha ^2}\right) - \sigma_1 \ge 0.
\end{equation*}
However, we have that
\begin{equation*}
\begin{aligned}
 \alpha \arccot\left(\frac{\sigma_1 }{\alpha }\right) \left(1+\frac{\sigma_1^2}{\alpha ^2}\right) - \sigma_1  \ge \frac \pi 4 \alpha + \frac \pi 4 \frac{\sigma_1^2}{\alpha^2} - \sigma_1
 \end{aligned}
\end{equation*}
hence we need to have that $\alpha^2 - \frac{4\sigma_1 \alpha}{\pi} + \sigma_1^2 \ge 0$. This quantity is positive at $\alpha = \sigma_1$ since $2 \ge 4/\pi$ and it is increasing since
\begin{equation*}
 2\alpha > \frac{4 \sigma_1 }{\pi} 
\end{equation*}
for $\alpha \ge \sigma_1$. We conclude that $g$ is increasing. This implies that $f$ and $g$ have
exactly one intersection point, say at $\alpha_0$. We have that $a_2 =
f(\alpha_0) = g(\alpha_0)$ and $a_1 = L - a_2$. Note that since the
square maximises $\sigma_1$ and since the eigenvalues are
continuous functions of the side lengths of a rectangle this means
that among all rectangles with given area or perimeter, $\sigma_1$ is
a decreasing function of $a_2$.

\appendix

\section{Proof of Lemma \ref{lem:arccot}}
\label{appendix:arccot}
\begin{lem} \label{lem:arccot}
Let 
\begin{equation*}
 f_i(\bx) = \arccot\left(c\left[1 + \sum_{j \ne i} \left(\frac{x_j}{x_i}\right)^2\right]^{1/2}\right).
\end{equation*}
 for some $c > 0$ and where by convention $\arccot(\infty) = 0$,  and let $\psi : \R^p \to \R^p$ be a bounded function. Then,
 \begin{equation} \label{eq:arccotdiff} \tag{A.1}
  \left|f_i(\bx + \psi(\bx)) - f_i(\bx)\right| = \bigo{|\bx|^{-1}}.
 \end{equation}
\end{lem}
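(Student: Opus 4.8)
The whole lemma rests on one simplification. Since
\[
c\Bigl[1+\sum_{j\ne i}\bigl(x_j/x_i\bigr)^2\Bigr]^{1/2}
=\frac{c}{|x_i|}\Bigl(\sum_{j}x_j^2\Bigr)^{1/2}
=\frac{c\,|\bx|}{|x_i|},
\]
the function $f_i$ depends on $\bx$ only through the single ratio $s(\bx):=|x_i|/|\bx|\in(0,1]$, namely
\[
f_i(\bx)=\phi\bigl(s(\bx)\bigr),\qquad \phi(s):=\arccot\!\bigl(c/s\bigr),
\]
with the convention $\arccot(\infty)=0$ making $\phi(0)=0$ and $f_i$ well defined where $x_i=0$. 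Thus the first step is to record that $\phi\colon[0,1]\to[0,\pi/2]$ is Lipschitz: a direct computation gives $\phi'(s)=\dfrac{c}{s^2+c^2}\le\dfrac1c$, so $\phi$ is $\tfrac1c$‑Lipschitz on $[0,1]$. This reduces \eqref{eq:arccotdiff} to the estimate $|s(\bx+\psi(\bx))-s(\bx)|=\bigo{|\bx|^{-1}}$, and in particular disposes of all the awkward cases where $x_i$ or $x_i+\psi_i(\bx)$ vanishes, uniformly.

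\textbf{Main estimate.} Let $M:=\sup_{\bx}|\psi(\bx)|<\infty$ and restrict to $|\bx|\ge 2M$ (for $|\bx|$ bounded the left side of \eqref{eq:arccotdiff} is at most $\pi/2$ and the bound is trivial). Then $|\bx+\psi(\bx)|\ge |\bx|-M\ge |\bx|/2>0$, so writing over a common denominator,
\[
\bigl|s(\bx+\psi(\bx))-s(\bx)\bigr|
=\frac{\bigl|\,|x_i+\psi_i|\,|\bx|-|x_i|\,|\bx+\psi|\,\bigr|}{|\bx+\psi|\,|\bx|}.
\]
Using the reverse triangle inequality twice, the numerator is bounded by
\[
\bigl|\,|x_i+\psi_i|-|x_i|\,\bigr|\,|\bx|+|x_i|\,\bigl|\,|\bx|-|\bx+\psi|\,\bigr|
\le |\psi_i|\,|\bx|+|x_i|\,|\psi|\le 2M|\bx|,
\]
since $|x_i|\le|\bx|$, while the denominator is at least $\tfrac12|\bx|^2$. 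Hence $|s(\bx+\psi(\bx))-s(\bx)|\le 4M|\bx|^{-1}$.

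\textbf{Conclusion.} Combining the two steps,
\[
\bigl|f_i(\bx+\psi(\bx))-f_i(\bx)\bigr|
\le \tfrac1c\,\bigl|s(\bx+\psi(\bx))-s(\bx)\bigr|
\le \frac{4M}{c\,|\bx|}=\bigo{|\bx|^{-1}},
\]
which is \eqref{eq:arccotdiff}. There is no serious obstacle here: the only point that requires a little care is that the naive argument (differencing the two $\arccot$ arguments directly) fails when $x_i$ is small, and the fix is precisely the reduction to $\phi(s(\bx))$, after which everything is an elementary one‑variable Lipschitz estimate together with the observation $|x_i|\le|\bx|$.
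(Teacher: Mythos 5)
Your proof is correct and takes a genuinely different route from the paper's. The paper passes to spherical coordinates, notes that $f_i$ is $0$-homogeneous (hence depends only on the angular variables $\btheta$), shows $|\btheta-\btheta_\psi|=\bigo{r^{-1}}$, and then verifies that the angular partial derivatives of $f_p(\bx)=\arccot(c\prod_{j}\csc\theta_j)$ stay bounded, which requires a brief argument that the singularities of numerator and denominator match up as various $\theta_j\to 0$ or $\pi/2$. Your key observation --- that the argument of $\arccot$ collapses to $c|\bx|/|x_i|$, so $f_i=\phi(|x_i|/|\bx|)$ with $\phi(s)=\arccot(c/s)$ globally $\tfrac1c$-Lipschitz on $[0,1]$ --- replaces that multivariable bookkeeping with a single one-variable derivative bound plus an elementary estimate on $|s(\bx+\psi)-s(\bx)|$. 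This is cleaner: it avoids spherical coordinates altogether, handles uniformly the regions where $x_i$ is small or vanishes (where the paper's $\csc\theta_j$ formula needs care), and makes the implicit constant explicit ($4M/c$). Both proofs are sound; yours buys transparency at no extra cost.
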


\begin{proof}
We have that
\begin{equation*}
 \left|f_i(\bx) - f_i(\bx_0)\right| = \bigo{|\bx - \bx_0||\nabla f(\bx_0)|}. 
\end{equation*}

Consider spherical coordinates $(r,\theta_1,\dotsc,\theta_{p-1})$
 \begin{equation*}
  \begin{aligned}
   r &= |\bx|, \\
   x_j &= r \cos(\theta_j) \prod_{i < j} \sin(\theta_i),
  \end{aligned}
 \end{equation*}
where by convention $\theta_p = 0$. 

Denote $\bx = (r,\btheta)$ and $\bx + \psi(\bx) = (r_\psi,\btheta_\psi)$. It is clear that since $\psi$ is bounded we have that
\begin{equation*}
 |\btheta - \btheta_\psi| = \bigo{r^{-1}}.
\end{equation*}
Indeed, from planar geometry we get that
\begin{equation*}
 \tan(|\btheta - \btheta_\psi|) \le \frac{\sup_{\bx \in \R^p} \psi(\bx)}{r}.
\end{equation*}

One can observe that the functions in Equation \eqref{eq:arccotdiff} depend only on $\btheta_\psi$ and $\btheta$. Hence, showing that the gradient is bounded in $\btheta$ implies that $|f_i(\bx + \psi(\bx)) - f_i(\bx)| = \bigo{r^{-1}}$.

By symmetry, we can suppose without loss of generality that $i = p$ in Equation \eqref{eq:arccotdiff}. Then, using repeatedly the identity $1 + \cot^2 \theta = \csc^2\theta$ we have that
\begin{equation*}
 f_p(\bx) = \arccot\left(c\prod_{j=1}^{p-1} \csc\theta_j\right).
\end{equation*}
Now, we have that
\begin{equation*}
 \del_{\theta_j} f_p(x) = c\frac{\cot \theta_j \prod_{k=1}^{p-1} \csc \theta_k}{1 + c^2\prod_{k=1}^{p-1} \csc^2 \theta_k}.
\end{equation*}
This is bounded since when $\theta_j \to n \pi$, the singularities are of the same order on the numerator and denominator while when it is any other $\theta_i \to n \pi$, the singularities are of order $1$ in the numerator and $2$ in the denominator. This concludes the proof.
\end{proof}

\section{Positivity of the constant $C_2$} \label{appendix:positive}

We can rewrite $C_2$ as
\[
 C_2 = \frac{ (2^{\frac{d+2}{2}} - 2) \pi \omega_{d-2} - 2^{d+1} G_{d-1,1}}{2(2\pi)^{d-1}},
\]
and we need to show that $C_2 > 0$ for $d \ge 3$. This will be done by showing that
\begin{equation} \label{eq:greaterthanone} \tag{B.1}
 \frac{(2^{\frac{d+2}{2}} - 2) \pi \omega_{d-2}}{2^{d+1} G_{d-1,1}} > 1.
\end{equation}
Let us first observe that the integrand in $G_{d-1,1}$ is positive and that for any $\btheta \in [0,\pi/2]^{d-2}$, we have that
\[
 \arccot\left(\prod_{j = 1}^{d-2} \csc \theta_j\right) \le \arccot(1) < 1.
\]
Hence,
\begin{align*}
 G_{d-1,1} &= \int_0^{\pi/2} \dotso \int_0^{\pi/2}
 \arccot\left(\prod_{j=1}^{d-2} \csc \theta_j \right)
 \prod_{k=1}^{d-2} \sin^k(\theta_k) \de \theta_1 \ldots \de \theta_{d-2} \\
 &\le \prod_{k=1}^{d-2} \int_0^{\pi/2}  \sin^k(\theta_k) \de \theta_k \\&= \frac{2^{2-d}\pi^{\frac{2 - d}{2}}}{\Gamma\left(\frac d 2\right)}.
\end{align*}
The last equality is true for $d = 3$, and is seen to be true for all $ d \ge 3$ by induction using the identity \cite[3.621 (1)]{gradshteynryzhik}
\[
 \int_0^{\pi/2} \sin^k(\theta) \de \theta = 2^{k - 1} B\left(\frac{k+1}{2},\frac{k+1}{2}\right).
\]
and the Gamma function duplication identity
\[
 \Gamma(\mu)\Gamma\left(\mu + 1/2\right) = 2^{1-2\mu} \sqrt \pi \Gamma(2\mu).
\]
Using the fact that
\[
 \omega_{d-2} = \frac{\pi^{\frac{d-2}{2}}}{\Gamma\left(\frac d 2\right)}
\]
and replacing in Equation \eqref{eq:greaterthanone} we have that
\begin{align*}
 \frac{(2^{\frac{d+2}{2}} - 2) \pi \omega_{d-2}}{2^{d+1} G_{d-1,1}} &\ge \frac{(2^{\frac{d+2}{2}} - 2) \pi}{8} > 1
\end{align*}
for all $d \ge 3$, concluding the proof that $C_2 > 0$.

\bibliographystyle{plain}
\bibliography{biblio}

\end{document}